%\documentclass[11pt,a4paper]{amsart}
% changed article type because I was losing comments in Newcastle, prob because
% of page width
\documentclass[11pt,a4paper]{amsart}
\usepackage{color}

\setlength{\topmargin}{1.5cm} \setlength{\textwidth}{14cm} \setlength{\textheight}{19.5cm}
\setlength{\footskip}{.7cm} \addtolength{\evensidemargin}{-1cm} \addtolength{\oddsidemargin}{-1cm}
\setlength{\headsep}{.6cm} \setlength{\headheight}{4mm}

\usepackage{amsmath,amsfonts,amsthm, amssymb,color}
\usepackage{graphicx}

\newtheorem{theorem}{Theorem} [section]
\newtheorem{lemma}[theorem]{Lemma}
\newtheorem{proposition}[theorem]{Proposition}
\newtheorem{corollary}[theorem]{Corollary}
\newtheorem{conjecture}[theorem]{Conjecture}

\theoremstyle{definition}
\newtheorem{definition}[theorem]{Definition}

\newenvironment{proofof}[1]{\normalsize {\sc Proof of #1}:}{{\hfill $\Box$}}
\newenvironment{mylist}{\begin{list}{}{
\setlength{\parskip}{0mm}
\setlength{\topsep}{1mm}
\setlength{\parsep}{0mm}
\setlength{\itemsep}{0.5mm}
\setlength{\labelwidth}{7mm}
\setlength{\labelsep}{3mm}
\setlength{\itemindent}{0mm}
\setlength{\leftmargin}{12mm}
\setlength{\listparindent}{6mm}
}}{\end{list}}

\DeclareMathOperator{\cycwd}{{\mathbf c}}
\DeclareMathOperator{\decycwd}{{\mathbf d}}

\newcommand{\gen}[1]{\langle#1\rangle}

% changed comment because as it was it wasn't all visible for me
%\def\comment#1{\marginpar{\sffamily{\small #1\par}\normalfont}}

\newcommand{\wt}[1]{\widetilde{#1}}
\newcommand{\wdl}{\widetilde{L}}
\newcommand{\ldl}{\widetilde{\Lambda}}

\newcommand\R{{\mathbb R}}
\newcommand\N{{\mathbb N}}
\newcommand\Z{{\mathbb Z}}

\newcommand\slex{<_{sl}}
\newcommand\slexeq{\leq_{sl}}
\newcommand\conj{\sim}
\newcommand\subconj{_c}

\newcommand{\pre}{\mathrm{pre}}
\newcommand{\suf}{\mathrm{suf}}
\newcommand{\sub}{\mathrm{sub}}
\def\klt#1{#1-locally testable}

\newcommand\1{\varepsilon}
\newcommand\geol{\mathsf{Geo}}   % \Gamma geodesic language
\newcommand\ngeol{\mathsf{NearGeo}}   % nearly geodesic language
\newcommand\wgeol{{geodesic language}}   
    % geodesic growth series    %% use {\mathcal G}?
\newcommand\cyc{\mathsf{Cyc}}    % geodesic growth series    %% use {\mathcal G}?

\newcommand\sphl{\mathsf{SL}}    % \Sigma spherical language
\newcommand\wsphl{{shortlex language}}    %% spherical language??  normal language?
    % spherical growth series   %% use {\mathcal S}?
\newcommand\wsphs{{spherical growth series}}    %% normal series  

\newcommand\geocl{\mathsf{ConjGeo}}   % \widetilde{\Gamma} geodesic conjugacy language
\newcommand\wgeocl{{conjugacy geodesic language}}   
    % geodesic conjugacy series   %% use {\mathcal G}?

\newcommand\geocpl{\mathsf{CycGeo}}   % \breve{\Gamma} geodesic cyclic conjugacy language
\newcommand\wgeoccl{{cyclic geodesic language}}  % cyclic permutation geodesic language? %geodesic cyclic conjugacy language
    % geodesic cyclic conjugacy series   %% use {\mathcal G}?
  % geodesic cyclic conjugacy
  
\newcommand\sphcl{\mathsf{ConjSL}}  %\widetilde {\Sigma} spherical conjugacy language  %% normal ??  %% use {\mathcal N}?
\newcommand\wsphcl{{shortlex conjugacy language}}    %% spherical?? normal??
\newcommand\sphcs{{\widetilde {\sigma}}}    % spherical conjugacy series  %% normal?? %% use {\mathcal S}?
\newcommand\wsphcs{{spherical conjugacy growth series}}    %% normal?? shortlex conjugacy series??

\newcommand\mincl{\mathsf{ConjMinLenSL}}    % \widetilde {\mathcal E} equality conjugacy language??  %% intermediate??  
\newcommand\wmincl{{shortlex minimal conjugacy language}}    %% ??better name?? conjugacy language??   
    % equality conjugacy series %% intermediate?   %% use {\mathcal I}?
    %% intermediate  conjugacy series??   %% use {\mathcal I}?
\newcommand\mineltcl{\mathsf{ConjMinLen}}

\newcommand\wslex{{shortlex}}
\newcommand\Slex{{Shortlex}}
\newcommand\slnf{{shortlex normal form}}
\newcommand\slcnf{{shortlex conjugacy normal form}}

\newcommand\geocon{{conjugacy geodesic}}  %%conjugationally geodesic?
\newcommand\geocp{{cyclic geodesic}} %% cyclic permutation geodesic?

\newcommand\ra{\rightarrow}

\begin{document}

\author{Laura Ciobanu, Susan Hermiller, Derek Holt, and Sarah Rees}
\title{Conjugacy languages in groups}
%\date{28 January 2014}
\begin{abstract}
We study the regularity of several languages derived
from conjugacy classes in a finitely generated group $G$
for a variety of examples including
word hyperbolic, virtually abelian, Artin, and Garside groups.
We also determine the rationality of the growth
series of the \wsphcl \ in virtually cyclic groups, proving one 
direction of a conjecture of Rivin.

\bigskip

\noindent 2010 Mathematics Subject Classification: 20F65, 20E45; 20F67, 20F36.

\noindent Key words: Conjugacy growth, regular languages,
word hyperbolic groups, Artin groups, Garside groups.
\end{abstract}
\maketitle

%%%%%%%%%%%%%%%%%%%%%%%%%%%%%%%%%%%%%%%%%%%%%%%%%%%%%%%%%%%%%%%%%%%%%%%%%%%%
\section{Introduction}\label{sec:intro}
%%%%%%%%%%%%%%%%%%%%%%%%%%%%%%%%%%%%%%%%%%%%%%%%%%%%%%%%%%%%%%%%%%%%%%%%%%%%
%%%%%%%%%%%%%%%%%%%%%%%%%%%%%%%%%%%%%%%%%%%%%%%%%%%%%%%%%%%%%%%%%%%%%%%%%%%%

Many classes of finitely presented groups have been
studied via the formal language theoretic
properties of their sets of geodesics 
or shortlex least representatives of the group elements.
In this paper we study the regularity of
three languages derived from the conjugacy classes,
rather than elements, of some of these groups.

A regular language is recognized by a finite state
automaton, and one particularly useful application
is the existence of an algorithm that uses this
automaton to calculate
a rational function equal to the growth series
(that is, the generating function associated to the
growth function) of the language.

Before proceeding further, we need to recall some notation.
We use standard notation from formal language theory
and refer to \cite{hu} for details. Where $X$ is a finite set,
we denote by $X^*$ the set of all words over $X$, and call a subset of
$X^*$ a language.
We write $\1$ for the empty word, 
and denote by $X^+$ the set of all non-empty words over $X$
(so $X^*=X^+ \cup \{\1\}$).
For each word $w \in X^*$, let $l(w)=l_X(w)$ denote its length over $X$.
For subsets $A,B$ of $X^*$, we define $AB$ to be the set of concatenations
$wu$ with $w \in A, u \in B$. Similarly we define $A^n$ to be the set of
concatenations of $n$ words from $A$, $A^* = \cup_{n=0}^\infty A^n$,
$A^+ = \cup _{n=1}^\infty A^n$. A language is regular if it can 
be built out of finite subsets of $X$ using the operations of union,
concatenation, $*$ (and complementation); such an expression for a
language is called a {\em regular expression}. 

All groups we consider in this paper are finitely generated, and all
generating sets finite and inverse-closed.
Let $G=\langle X \rangle$ be a group.
Let $\pi:X^* \rightarrow G$ be the natural projection onto $G$, and let
$=$ denote equality between words and $=_G$ equality between group elements
(so $w=_G v$ means $\pi(w)=\pi(v)$).
For $g \in G$, define the {\em length}
of $g$, denoted $|g|\;(=|g|_X)$, to be
the length of a shortest representative word for $g$ over $X$.
Define a {\em geodesic} to be a word $w \in X^*$ with $l(w)=|\pi(w)|$.

Let $\conj$ denote the equivalence relation on $G$ given
by conjugacy, and $G/\!\conj$ its set of equivalence classes.
Let $[g]\subconj$ denote the conjugacy class of $g \in G$.
Define the {\em length up to conjugacy} of an element $g$ of $G$,
denoted $|g|\subconj$, by 
\[|g|\subconj:=\min\{|h| \mid h \in [g]\subconj\}.\]
We say that $g$ has {\em minimal length up to conjugacy}
if $|g|=|g|\subconj$.

We call a word $w \in X^*$ satisfying $l(w)=|\pi(w)|\subconj$ 
a {\em geodesic}
{\em with respect to conjugacy}, or a {\em \geocon} word.
Note that if a word $w$ is a \geocon, then
so is every cyclic permutation of $w$.
By contrast,
a word $w \in X^*$ satisfying the property
that every cyclic permutation is a geodesic
is not necessarily a \geocon.  Call a word whose
cyclic permutations are all geodesics
a {\em geodesic with respect to cyclic permutation},
or a {\em \geocp} word.

We consider six languages associated to
the pair $(G,X)$, the first three being:  
\begin{eqnarray*}
\geol=\geol(G,X)&:=&\{w \in X^* \mid l(w)=|\pi(w)|\},\\
\geocl=\geocl(G,X)&:=&\{w \in X^* \mid l(w)=|\pi(w)|\subconj\},\\
\geocpl=\geocpl(G,X)&:=&\{w \in X^* \mid w \text{ is a \geocp}\},
\end{eqnarray*}
which we call
the {\em \wgeol}, {\em \wgeocl}, and 
{\em \wgeoccl}, respectively, of $G$ with respect to $X$.
The \wgeocl\ was introduced by the first two authors 
in~\cite{ciobanuhermiller}, where it is shown that
the property of having both $\geol$ and $\geocl$ regular
is preserved by taking graph products.

Although $\geol$ and $\geocl$ capture
much of the geometric information about the elements
and conjugacy classes of $G$,  
it is desirable also
to have languages whose growth functions
are exactly the growth functions of the elements,
minimal length elements up to conjugacy, and 
conjugacy classes of $G$, respectively.
To that end, let $\leq$ be a total ordering of $X$, and 
let $\slexeq$ be 
the induced shortlex ordering of $X^*$ (for which
$u \slex w$ if either $l(u)<l(w)$, or $l(u)=l(w)$ but
$u$ precedes $w$ lexicographically).

For each $g \in G$, 
we define the {\em \slnf} of $g$ to be the unique word
$y_g \in X^*$ with $\pi(y_g)=g$ such that
$y_g \slexeq w$  for all $w \in X^*$
with $\pi(w)=g$.
For each conjugacy class $c \in G/\!\conj$, 
we define the 
{\em \slcnf} of $c$ 
to be the shortlex least word $z_c$ over $X$ representing an 
element of $c$; that is, 
$\pi(z_c)\in c$, and $z_c \slexeq w$ for all 
$w \in X^*$ with $\pi(w)\in c$.

Our remaining three languages are the
{\em \wsphl}, {\em \wmincl}, and 
{\em \wsphcl} for $G$ over $X$, defined respectively as 
\begin{eqnarray*}
\sphl=\sphl(G,X)&:=&\{y_g \mid g \in G\},\\
\mincl=\mincl(G,X)&:=&\{y_g \mid |g|=|g|\subconj\}, \text{ and}\\
\sphcl=\sphcl(G,X)&:=&\{z_c \mid c \in G/\!\conj\}.\end{eqnarray*}
We note that the language $\mincl$ of shortlex
normal forms for the set $\mineltcl$ of minimal length
elements up to conjugacy in $G$ satisfies $\mincl = \geocl \cap \sphl$.
%\comment{SH:  Moved this paragraph earlier and changed notation, following email discussion.}

Our six languages satisfy the following containments.
\[\begin{array}{ccccccc} 
       &           & \geocl                      & \subseteq & \geocpl   &
          \subseteq & \geol \\
       &           & \rotatebox{90}{$\subseteq$} &           &           &
           & \rotatebox{90}{$\subseteq$} \\    
         \sphcl & \subseteq & \mincl                      &           &
              \subseteq &           & \sphl 
\end{array}\]
Any language $L$ over $X$ gives rise to a
{\em strict growth function} $\phi_L:\N \cup \{0\} \ra \N \cup \{0\}$, 
defined by  $\phi_L(n) := |\{w \in L \mid l(w) = n\}|$, and
an associated generating function, called
the {\em strict growth series}, given
by $f_L(z) := \sum_{i=0}^\infty \phi_L(i)z^i$.
It is well known 
that if $L$ is a regular language, 
then $f_L$ is a rational function.
For the three languages above, the coefficient
$\phi_{\sphl}(n)$ is the number of elements of $G$  of length $n$,
$\phi_{\mincl}(n)$ is the number of minimal length
  elements of $G$ up to conjugacy of length $n$, and
$\phi_{\sphcl}(n)$ is the number of conjugacy classes of $G$ whose shortest
elements have length $n$.
Note that 
all three of these numbers depend only on $G,X$ and not
on our choice of $\sphl$ as a language of geodesic normal forms for $G$;
the growth series
$f_\sphl$, $f_\mincl$, and $f_\sphcl$
would be the same with respect to any geodesic normal form.
We consider the last of these series in two examples in this paper.
To emphasize this independence from the shortlex ordering, we
denote by $\sphcs$ the strict growth series of $\sphcl$, that is,
\[ \sphcs=\sphcs(G,X) := f_{\sphcl(G,X)}(z)
  = \sum_{i=0}^\infty \phi_{\sphcl(G,X)}(i)z^i, \]
following the notation of \cite{ciobanuhermiller},
and call that series the {\em \wsphcs}.

In Section~\ref{sec:cgvscpg}, 
Proposition~\ref{prop:cyclgeoreggeo}
proves that regularity of $\geol$ implies the same for $\geocpl$.
Theorem~\ref{thm:grprod} proves that equality
of the languages $\geocpl$ and $\geocl$
is preserved by taking graph products,
giving Corollary~\ref{cor:raag}
that $\geocl$ and $\geocpl$ are equal 
(over the standard generating set) for all
right-angled Artin and right-angled Coxeter groups.

Section~\ref{sec:regulargeod} studies regularity of the
conjugacy languages for many families known to have regular geodesic languages.
These include word hyperbolic groups 
(for all generating sets)~\cite[Theorem 3.4.5]{echlpt}
and, with appropriate generating sets, 
virtually abelian groups and geometrically finite hyperbolic groups
~\cite[Theorem 4.3]{neushap},
Coxeter groups~\cite{Howlett},
right-angled Artin groups~\cite{lmw},
Artin groups of large type~\cite{HRgeo}, and
Garside groups (and hence Artin groups of finite type and
torus knot groups)~\cite{CharneyMeier}.
For groups in most of these families, we have succeeded in
proving the regularity of $\geocl$ and $\mincl$.
Moreover, for virtually abelian groups, we 
build on~\cite[Prop.~6.3]{HHR} to exhibit a generating set for
which $\geocl$ has the stronger property of
piecewise testability (see Definition~\ref{def:pw}).

We summarize the results known for these families
of groups in Table~\ref{table2}, where 
the symbol * means that the
property holds for all generating sets, and
if * does not appear, the result is only
known to hold for a specific generating set. 

In the cases of virtually abelian groups and Garside groups, we found no
proofs in the literature of their \wslex\ automaticity with respect to the
generating sets under consideration, and so we have also supplied those
in Section~\ref{sec:regulargeod}.

\begin{table}[!h]
\begin{tabular}
{|@{\hspace{2pt}}c@{\hspace{2pt}}|@{\hspace{3pt}}c@{\hspace{3pt}}|
  @{\hspace{2pt}}c@{\hspace{2pt}}|@{\hspace{2pt}}c@{\hspace{2pt}}|
  @{\hspace{2pt}}c@{\hspace{2pt}}|@{\hspace{2pt}}c@{\hspace{2pt}}|
  @{\hspace{2pt}}c@{\hspace{2pt}}|}
%{|c|c|c|c|c|c|c|}
\hline 
   Result & Group  & $\sphl$   & $\geol$  & $\geocl$  & $\mincl$ & $\sphcl$  \\
  \hline \hline
 T\,hm.~\ref{thm:hypgroups} & word hyperbolic & reg.*~\cite{echlpt}&
       reg.*~\cite{echlpt} & reg.* & reg.*    & -- \\ \hline
 Thms.\;\ref{thm:virtcyclic},\ref{thm:hypgroups}  & virtually cyclic
 & reg.*~\cite{echlpt}& reg.*~\cite{echlpt} & reg.*  & reg.* & reg.* \\ \hline  
 Prop.~\ref{prop:virtabcgeo} & virtually abelian & reg. & PT~\cite{HHR}& PT &
       reg. & -- \\ \hline
% Thm.~\ref{thm:lt} & $\geol$  LT & -- & LT & reg. & -- & --   \\ \hline
 Thms.~\ref{thm:Artin_geocl}, & extra-large  & reg.~\cite{HRgeo}&
    reg.~\cite{HRgeo} & reg. & reg. & not reg. \\
    \ref{thm:Artin_sphcl}& type Artin & & & & & \\ \hline
 Prop.~\ref{prop:garsideslaut} & Garside & reg.&reg.~\cite{CharneyMeier}
   &-- &-- & -- \\ \hline
 Thm.~\ref{thm:braid} &homog.~Garside &reg. &reg.~\cite{CharneyMeier} &
    reg. &reg. &-- \\ \hline
   & graph product $\dagger$ & reg.~\cite{hm}& reg.~\cite{lmw} &
      reg.~\cite{ciobanuhermiller} & reg. & --\\ \hline
\end{tabular}

\bigskip
{$\dagger$: assuming that $\sphl$, $\geol$, $\geocl$ are all regular for
all vertex groups}

\bigskip

\vspace{.2cm}
\caption{Summary of language properties for groups in
        Sections \ref{sec:regulargeod} \& \ref{sec:spherical}}\label{table2}
\end{table}
\vspace{-.5cm}
Note that Table~\ref{table2} includes many
families of Artin groups, including those of 
spherical (i.e.~finite) type (which are homogeneous Garside groups),
and also right-angled Artin and Coxeter groups,
as graph products of appropriate vertex groups. 
(The graph product results are included in the table for completeness, but
are all proved elsewhere.)

%\comment{SH:  Put information back in about the blank entries in table 2, next to the table.}
For two of the blank entries in Table~\ref{table2}
in the $\sphcl$ column, namely word hyperbolic
and graph product groups, we note that
there are examples of groups and finite generating
sets for which $\sphcl$ is not regular for any 
total ordering of the generators.  In particular,
Rivin~\cite{rivin},\cite{rivin04}
and Ciobanu and Hermiller~\cite{ciobanuhermiller}
have shown that for a free product of two infinite cyclic groups 
or two finite cyclic groups of order greater than 2, 
with respect to the cyclic generators (in the infinite case) or the
nontrivial elements of the finite cyclic factors, respectively,
%examples of word hyperbolic groups
%and specific generating sets for which the  
the \wsphcs\ is not rational.

Some of the results in Section~\ref{sec:regulargeod} have consequences for groups beyond those treated in this paper. In \cite[Section 8]{AntolinCiobanu} Antol\'{i}n and the first author have shown that certain relatively hyperbolic groups have regular $\geocl$ by using Corollary \ref{cor:fftp}.

Section~\ref{sec:spherical} is devoted to a 
conjecture of Rivin~\cite{rivin},\,\cite{rivin04}
about the rationality of the \wsphcs\ $\sphcs$ for word hyperbolic
groups. In Theorem~\ref{thm:virtcyclic} we verify one direction of the
conjecture, proving that for a virtually cyclic group $\sphcl$ is regular,
and hence that $\sphcs$ is rational.
This result is included in Table~\ref{table2}.

Section~\ref{sec:cgvsg} examines two virtually abelian groups
to demonstrate how
regularity properties can differ between
the various languages we consider, and depend on choice of generating set.
Cannon~\cite[p.~268]{neushap} showed that regularity
of  $\geol$ depends upon the generating
set for $G=\Z^2 \rtimes \Z/2\Z$.
Using the same two generating sets as Cannon,
we show in Propositions~\ref{prop:sphclnotreg} and~\ref{prop:geovsgeocon} that
regularity of  $\sphcl$  can depend upon the
generating set. At the same time, we prove for one of
these generating sets that, although $\sphcl(G,Z)$ is not regular
(with respect to any total ordering of $Z$),
$\sphcs(G,Z)$ is rational.
Moreover, over the generating set used in 
Proposition~\ref{prop:geovsgeocon}, we show that
$\geocl(G,X)$ is regular but 
$\geol(G,X)$ is not, and similarly $\mincl$
and $\sphcl$ are regular
but $\sphl$ is not.
Considering a further finite extension 
$K=\Z^2 \rtimes D_8$ of $G$ in 
Propositions~\ref{prop:z2d8good} and~\ref{prop:vab_geoconnotreg},
we show that regularity of the \wgeocl\ and \wmincl\ can also
depend upon the generating set.
These results are summarized in Table~\ref{table1}.

\begin{table}[!h]
\begin{tabular}
{|@{\hspace{3pt}}c@{\hspace{3pt}}|@{\hspace{3pt}}c@{\hspace{3pt}}|
  @{\hspace{3pt}}c@{\hspace{3pt}}|@{\hspace{3pt}}c@{\hspace{3pt}}|
  @{\hspace{3pt}}c@{\hspace{3pt}}|@{\hspace{3pt}}c@{\hspace{3pt}}|
  @{\hspace{3pt}}c@{\hspace{3pt}}|}
%{|c|c|c|c|c|c|c|}
\hline
   Result                           & Group,                  & $\sphl$      &
    $\geol$                    &  $\geocl$   &  $\mincl$   & $\sphcl$\\
                                    & generators               &              &
                                 &             &             &         \\
  \hline \hline
  Prop.~\ref{prop:sphclnotreg}      & $\Z^2 \rtimes \Z/2\Z,Z$  &  reg.        &
  reg.~\cite{neushap}        & reg.        & reg.        & not reg.    \\ \hline
  Prop.~\ref{prop:geovsgeocon}      & $\Z^2 \rtimes \Z/2\Z,X$  &  not reg. &
 not reg.~\cite{neushap}    & reg.        & reg.        & reg.        \\ \hline
  Prop.~\ref{prop:z2d8good}         & $\Z^2 \rtimes D_8,Z'$    &  reg.        &
 reg.                       & reg.        & reg.        & not reg.    \\ \hline
  Prop.~\ref{prop:vab_geoconnotreg} & $\Z^2 \rtimes D_8,X'$    &  not reg.    &
   not reg.              & not reg.    & not reg.    & not reg.    \\ \hline
\end{tabular}
\bigskip
\caption{Summary of language properties for groups in Section \ref{sec:cgvsg}}\label{table1}
\end{table}

\vspace{-.2cm}

The languages studied in this paper merit exploration not just 
because of the role they play in determining the conjugacy growth 
of a group, but also for their potential connections to the computational 
complexity of the conjugacy problem. 
We observe that a recursively presented group $G= \langle X \mid R \rangle$
for which $\sphcl(G,X)$ is recursive has solvable
conjugacy problem.  (Given $u \in X^*$, we find 
the shortlex representative of $[u]\subconj$ by
simultaneously enumerating conjugates $y^{-1}uy$ of $u$, 
words $w \in \sphcl(G,X)$ with $l(w) \le l(u)$,
and products $z$ of conjugates of relators, and
computing the free reduction of each $y^{-1}uyw^{-1}z$.
When that is the empty word, $w$ is the shortlex representative of
$[u]\subconj$.)
However Chuck Miller has pointed out to us that the
example of an amalgamated product $T$ of two free groups 
of the same rank described in~\cite[Section IVA]{miller}
has unsolvable conjugacy problem 
but has recursive $\geocl$.
Another open question of interest is what regularity of $\sphcl$ or 
$\geocl$ implies for the computational complexity of this important 
decision problem.

%%%%%%%%%%%%%%%%%%%%%%%%%%%%%%%%%%%%%%%%%%%%%%%%%%%%%%%%%%%%%%%%%%%
%%%%%%%%%%%%%%%%%%%%%%%%%%%%%%%%%%%%%%%%%%%%%%%%%%%%%%%%%%%%%%%%%%%
%%%     conjgeo equals cyclicconjgeo
%%%%%%%%%%%%%%%%%%%%%%%%%%%%%%%%%%%%%%%%%%%%%%%%%%%%%%%%%%%%%%%%%%%
%%%%%%%%%%%%%%%%%%%%%%%%%%%%%%%%%%%%%%%%%%%%%%%%%%%%%%%%%%%%%%%%%%%

\section{Conjugacy versus {\geocp}s}\label{sec:cgvscpg}

For any language $L \subset X^*$, let
$\cyc(L)$ denote the {\em cyclic closure} of $L$;
that is, $\cyc(L)$ is the set of all cyclic 
permutations of words in $L$.  

\begin{lemma}\label{lem:cyclicclosure}
If a language $L$ is regular, then
$\cyc(L)$ is also regular.
\end{lemma}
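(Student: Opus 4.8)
The plan is to start from a finite state automaton recognizing $L$ and build, for each state, a new automaton recognizing the ``rotated'' language, then take a finite union. Concretely, fix a deterministic finite automaton $M=(Q,X,\delta,q_0,F)$ with $L(M)=L$. A word $w$ lies in $\cyc(L)$ precisely when we can write $w=w_2w_1$ so that $w_1w_2\in L$; equivalently, running $M$ on $w_1$ from $q_0$ reaches some state $q$, running $M$ on $w_2$ from $q$ reaches a final state, \emph{and} running $M$ on $w$ itself from $q$ ``wraps around'' correctly. This suggests: for each state $q\in Q$, define $L_q:=\{w_2w_1 \mid \delta(q_0,w_1)=q,\ \delta(q,w_2)\in F\}$, and observe $\cyc(L)=\bigcup_{q\in Q}L_q$. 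Since $Q$ is finite and regular languages are closed under finite union, it suffices to show each $L_q$ is regular.

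For a fixed $q$, the language $\{w_1 \mid \delta(q_0,w_1)=q\}$ is regular: it is recognized by $M$ with the single accept state $q$. Likewise $\{w_2 \mid \delta(q,w_2)\in F\}$ is regular: it is recognized by the automaton $M$ with start state changed to $q$. Then $L_q$ is the concatenation of the second language with the first (in that order), and regular languages are closed under concatenation. Assembling these steps gives regularity of $\cyc(L)$.

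The one point requiring a little care — and the step I expect to be the main obstacle to a clean write-up — is the reindexing of the decomposition $w=w_2w_1$, since the ``cut point'' is existentially quantified and a priori unbounded; but this is exactly what the state-splitting trick handles, because the automaton's state after reading the prefix is a finite piece of data that records everything relevant about where the cut could be. An alternative, essentially equivalent, formulation is to build a single nondeterministic automaton with state set $Q\times Q$ (tracking both ``the state we pretend to have started in'' and ``the current state''), which guesses the start state $q$, reads $w_2$ from $q$ until it reaches a state in $F$, signals a ``wrap'', then reads $w_1$ from $q_0$ and accepts if it arrives back at $q$; closure of regular languages under nondeterminism then finishes the argument. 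Either route is routine once the bookkeeping is set up.
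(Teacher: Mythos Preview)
Your proof is correct and follows essentially the same approach as the paper: both decompose $\cyc(L)$ as $\bigcup_{q\in Q} L_q$, where $L_q$ is the concatenation of the language accepted by $M$ started at $q$ with the language accepted by $M$ with accept-state set $\{q\}$. The only cosmetic difference is that the paper builds an explicit NFA for each $L_q$ (two copies of $M$ glued by $\epsilon$-transitions from the accept states of the first copy to the initial state of the second), whereas you invoke closure under concatenation directly.
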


\begin{proof}
Let $M$ be a finite state automaton accepting $L$,
with state set $Q$ and initial state $q_0$. 
A word $w$ lies in $\cyc(L)$ if and only if,
for some factorisation $w_1w_2$ of $w$,
and for states $q,q' \in Q$, with $q'$ accepting, 
$M$ contains both a path from from $q$ to $q'$ labelled by $w_1$
and a path in $M$ from $q_0$ to $q$ labeled by $w_2$.
From this description we construct a set of non-deterministic
automata $M_q$, indexed by the states of $Q$, the
union of whose languages is $\cyc(L)$.

The automaton $M_q$ is formed from
two disjoint copies $M_{q,1}$ and $M_{q,2}$ of the states and transitions
of $M$. The initial state of $M_q$ is the state $q$ in $M_{q,1}$ and the single
accept state is the state $q$ in $M_{q,2}$. Additional
$\epsilon$-transitions join each state of $M_{q,1}$ that is accepting in $M$
to the state $q_0$ in $M_{q,2}$. 
\end{proof}

\begin{proposition}\label{prop:cyclgeoreggeo}
For $G= \langle X \rangle$,
if $\geol(G,X)$ is regular then so is $\geocpl(G,X)$.
\end{proposition}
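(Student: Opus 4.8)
The plan is to express $\geocpl(G,X)$ in terms of operations that preserve regularity, starting from $\geol(G,X)$. Recall that a word $w$ is a \geocp\ precisely when every cyclic permutation of $w$ is a geodesic. Equivalently, writing $\cyc$ for the cyclic closure operator of Section~\ref{sec:cgvscpg}, a word $w$ fails to be a \geocp\ exactly when some cyclic permutation of $w$ lies in the complement $X^* \setminus \geol(G,X)$; that is, $w \in \cyc(X^* \setminus \geol(G,X))$. The key observation is that $\cyc$ does \emph{not} commute with complementation in general, but it is still true that $w$ is a \geocp\ iff $w$ avoids the cyclic closure of the non-geodesics, which gives the clean identity
\[
\geocpl(G,X) \;=\; X^* \setminus \cyc\bigl(X^* \setminus \geol(G,X)\bigr).
\]

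First I would verify this set-theoretic identity carefully: the containment $w \notin \geocpl$ iff some cyclic permutation $v$ of $w$ is non-geodesic; but $v$ being a cyclic permutation of $w$ is a symmetric relation, so this holds iff $w$ is itself a cyclic permutation of some non-geodesic word $v$, i.e.\ $w \in \cyc(X^* \setminus \geol)$. Once that is in hand, the rest is a routine closure argument: $X^* \setminus \geol(G,X)$ is regular because $\geol(G,X)$ is regular by hypothesis and regular languages are closed under complementation; then $\cyc$ of a regular language is regular by Lemma~\ref{lem:cyclicclosure}; and finally one more complementation shows $\geocpl(G,X)$ is regular.

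The only subtle point — and the step I would be most careful about — is the set-theoretic identity itself, because the cyclic closure operator genuinely does not behave well under Boolean operations (for instance $\cyc(A \cap B) \neq \cyc(A) \cap \cyc(B)$ in general), so one must not be tempted to write something like $\geocpl = X^* \setminus \cyc(X^* \setminus \geol)$ by "pushing $\cyc$ through a complement". The identity is valid not because $\cyc$ commutes with anything, but because of the symmetry of the "is a cyclic permutation of" relation together with the fact that the defining condition for $\geocpl$ is a \emph{universal} quantifier over cyclic permutations, whose negation is the existential quantifier that $\cyc$ precisely encodes. Everything else is immediate from the closure properties of regular languages and Lemma~\ref{lem:cyclicclosure}, so the proof should be short.
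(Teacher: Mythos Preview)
Your proposal is correct and matches the paper's proof essentially verbatim: the paper simply states the identity $\geocpl = X^* \setminus \cyc(X^* \setminus \geol)$ and invokes Lemma~\ref{lem:cyclicclosure}. Your additional commentary on the symmetry of the cyclic-permutation relation and the caveat about $\cyc$ not commuting with Boolean operations is accurate but more than the paper provides.
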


\begin{proof}
This follows directly from Lemma~\ref{lem:cyclicclosure}
and the fact that
{\geocp}s are the words
that are not cyclic conjugates of nongeodesics;
that is,
\[
\geocpl =  X^* \setminus \cyc(X^* \setminus \geol).
\]

\vspace{-6pt}
\end{proof}

The following is an immediate consequence of the above proposition.
\begin{proposition}\label{prop:cyclgeo}
For $G = \langle X \rangle$,
if $\geol(G,X)$ is regular and
$\geocpl(G,X) = \geocl(G,X)$, then $\geocl(G,X)$
is also regular.
\end{proposition}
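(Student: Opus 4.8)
The plan is to combine the two results we already have available. By Proposition~\ref{prop:cyclgeoreggeo}, the hypothesis that $\geol(G,X)$ is regular immediately gives that $\geocpl(G,X)$ is regular. The second hypothesis is the set-theoretic equality $\geocpl(G,X) = \geocl(G,X)$ of the two languages; substituting this equality into the conclusion just obtained shows that $\geocl(G,X)$ is regular as well. So the proof is essentially a one-line deduction: regularity of $\geol$ yields regularity of $\geocpl$, and the assumed identification of languages transfers this to $\geocl$.

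The only thing worth spelling out is why there is no obstacle here. Regularity is a property of a language (a subset of $X^*$), not of a particular description of it, so once two languages are literally equal as sets, one is regular if and only if the other is. The containment diagram in the introduction always gives $\geocl(G,X) \subseteq \geocpl(G,X)$; the extra hypothesis is exactly the reverse containment, which upgrades this to equality. Thus there is genuinely nothing to prove beyond invoking Proposition~\ref{prop:cyclgeoreggeo} and the hypothesis.

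I would therefore write the proof as: \emph{By Proposition~\ref{prop:cyclgeoreggeo}, regularity of $\geol(G,X)$ implies that $\geocpl(G,X)$ is regular; since $\geocl(G,X) = \geocpl(G,X)$ by hypothesis, $\geocl(G,X)$ is regular.} There is no main obstacle: the substance was already done in Proposition~\ref{prop:cyclgeoreggeo} (and, underneath it, in the cyclic-closure Lemma~\ref{lem:cyclicclosure}), and Proposition~\ref{prop:cyclgeo} is stated separately only because the equality hypothesis $\geocpl = \geocl$ is the natural thing one wants to verify in applications — for instance in the right-angled Artin and Coxeter group cases flagged in the introduction.
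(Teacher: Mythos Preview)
Your proof is correct and matches the paper's approach exactly: the paper simply states that the proposition is an immediate consequence of Proposition~\ref{prop:cyclgeoreggeo}, which is precisely the one-line deduction you give.
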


Given a finite simplicial graph $\Lambda$ 
with vertices labeled by groups $G_i$,
the associated graph product group $G$
is generated by the groups $G_i$, with the
added relations that whenever the vertices
labeled $G_i$ and $G_j$ are adjacent,
then the elements of $G_i$ and $G_j$ commute.

\begin{theorem}\label{thm:grprod}
Suppose that 
$G_i=\langle X_i \rangle$, with
$\geocpl(G_i,X_i) = \geocl(G_i,X_i)$, for each $i$.
If $G$ is a
graph product of these groups with generating
set $X=\cup_i X_i$, then $\geocpl(G,X) = \geocl(G,X)$.
\end{theorem}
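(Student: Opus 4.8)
The plan is to reduce the statement to understanding geodesic and conjugacy-geodesic words in a graph product via their \emph{syllable} structure (also called the normal form of van der Lek / Green), and then to match the ``cyclically reduced'' combinatorics on both sides. Since the containment $\geocl(G,X)\subseteq\geocpl(G,X)$ always holds, I only need to prove the reverse containment: if $w\in X^*$ is a \geocp\ word for $G$, then $w$ is a \geocon\ word, i.e.\ $l(w)=|\pi(w)|\subconj$. First I would recall the standard normal form for graph products: every $g\in G$ is represented by a reduced expression $g_1g_2\cdots g_k$ with each $g_j$ a nontrivial element of some vertex group $G_{i_j}$, where ``reduced'' means no two syllables that can be brought adjacent (by shuffling commuting syllables) lie in the same vertex group; the length $|g|_X$ is then $\sum_j|g_j|_{X_{i_j}}$, and a word over $X$ is geodesic iff it spells such a reduced expression with each syllable geodesic in its vertex group. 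The analogous statement I will need for conjugacy: $g$ has minimal length up to conjugacy iff its normal form is \emph{cyclically reduced}, meaning no reduced expression for a conjugate has fewer syllables, equivalently the first and last ``movable'' syllables cannot be combined; this is the graph-product version of the cyclic normal form, which I would cite or prove from the vertex-group hypotheses together with the structure theory of graph products (e.g.\ the Servatius / Green style arguments, or \cite{ciobanuhermiller} where such normal forms are used).

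The heart of the argument is then the following reformulation. Let $w$ be a geodesic word representing $g$, with syllables $w=u_1u_2\cdots u_k$, $u_j$ geodesic in $G_{i_j}$. The claim ``$w$ is a \geocp'' must be translated into a statement purely about this syllable decomposition and about the vertex groups, and likewise ``$w$ is a \geocon''. I expect the key steps to be: (1) show that if $w$ is a \geocp\ then it is already reduced as a graph-product expression and each syllable $u_j$ is \emph{cyclically} geodesic in its vertex group $G_{i_j}$ — the latter because a cyclic permutation of $w$ that cyclically permutes a single syllable $u_j$ (while leaving the syllable-level structure intact, using commutation to move the rest of $w$ past it) must again be geodesic; (2) invoke the hypothesis $\geocpl(G_i,X_i)=\geocl(G_i,X_i)$ to upgrade ``$u_j$ cyclically geodesic in $G_{i_j}$'' to ``$u_j$ is a \geocon\ word in $G_{i_j}$'', i.e.\ $u_j$ already has minimal length up to conjugacy within its vertex group; (3) show that the whole expression $u_1\cdots u_k$ is cyclically reduced at the graph-product level, which is where being a \geocp\ (as opposed to merely geodesic) is used again — if it were not cyclically reduced, two syllables $u_a$ and $u_b$ in the same vertex group could be shuffled to the two ends and then cyclically rotated together, producing a shorter word and contradicting that all cyclic permutations of $w$ are geodesic; (4) conclude, using the cyclic normal form characterization of minimal conjugacy length in graph products from step one of the previous paragraph, that $l(w)=\sum_j|u_j|_{X_{i_j}}=\sum_j|u_j|\subconj$ (by (2)) equals $|g|\subconj$ (by (3)), so $w$ is a \geocon.

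The main obstacle, and the step I would spend the most care on, is (3) together with making precise the ``shuffle to the ends and rotate'' move — that is, proving rigorously that a \geocp\ word must spell a cyclically reduced graph-product expression whose syllables are individually conjugacy-minimal, and conversely assembling these local minimality facts into global conjugacy-minimality. This requires a clean lemma about \emph{cyclic} shuffling in graph products: two occurrences of vertex-group-$G_i$ syllables in a reduced expression can be rotated to be adjacent at the seam of the cyclic word precisely when every intervening syllable commutes with $G_i$, and in that case the word $w$ admits a cyclic permutation in which those two syllables are consecutive, hence (if they lie in the same vertex group) multiply into one syllable and shorten the word unless that product is already irreducible — and when $w$ is a \geocp\ no shortening can occur, forcing cyclic reducedness. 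I would isolate this as a separate lemma (or cite the corresponding statement from \cite{ciobanuhermiller} or the graph-products literature), after which the theorem follows by the bookkeeping in the previous paragraph. A secondary subtlety is that cyclic permutations of $w$ can split a syllable $u_j$ in the middle; one must check that such a permutation is geodesic iff the ``rotated syllable'' $u_j'u_j''\mapsto u_j''(\cdots)u_j'$ is geodesic in $G_{i_j}$, which is exactly the input that triggers the vertex-group hypothesis in step (2).
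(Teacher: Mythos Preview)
Your overall plan---reduce to the syllable/projection structure of the graph product and isolate where the vertex-group hypothesis enters---is in the right direction, but steps (1), (2), and (4) as you have stated them are incorrect, and the ``iff'' in your secondary subtlety is also false.

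Here is a concrete counterexample. Take $G_1=F_2=\langle a,c\rangle$ and $G_2=\Z=\langle b\rangle$ with no edge between the vertices, so $G=F_2*\Z$; both vertex groups satisfy the hypothesis. The word $w=aca^{-1}b$ is a \geocp\ in $G$ (each of its four cyclic rotations is a freely reduced word of length~$4$ representing an element of length~$4$), and its first syllable is $u_1=aca^{-1}$. But $u_1$ is \emph{not} a \geocp\ in $F_2$: the rotation $ca^{-1}a$ is not reduced. So the claim in (1) that every syllable of a \geocp\ word is itself cyclically geodesic in its vertex group fails. The same example refutes your ``iff'': splitting $u_1=a\cdot ca^{-1}$ and rotating $w$ gives $ca^{-1}ba$, which is geodesic in $G$ even though $ca^{-1}\cdot a$ is not geodesic in $F_2$; the non-commuting letter $b$ blocks the cancellation. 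Finally, your formula in (4) gives $|u_1|\subconj+|u_2|\subconj=1+1=2$, whereas $|\pi(w)|\subconj=4$; the equality $\sum_j|u_j|\subconj=|g|\subconj$ is simply false for cyclically reduced graph-product elements with at least two syllables in non-adjacent factors.

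The missing idea is that the vertex-group hypothesis is needed only for those vertices $i$ such that \emph{every} other letter occurring in $w$ lies in a vertex group adjacent to $i$. In that situation---and only then---can every cyclic permutation of the total $G_i$-content of $w$ be realised by a cyclic permutation of $w$ itself, so that this content is a \geocp\ (hence, by hypothesis, a \geocon) in $G_i$. When $w$ contains a letter from some vertex group not adjacent to $i$, no conjugacy minimality of the $G_i$-pieces is required; what is required is precisely your step~(3), that the $G_i$-pieces at the cyclic ``seam'' concatenate to a geodesic. The paper encodes this dichotomy via a projection $\rho_i\colon X^*\to (X_i\cup\{\$\})^*$ which erases letters from adjacent vertex groups and replaces letters from non-adjacent ones by a marker $\$$. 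The characterization cited from~\cite{ciobanuhermiller} says that $w\in\geocl(G,X)$ if and only if, for every $i$, writing $\rho_i(w)=u_0\$ u_1\cdots\$ u_n$, one has $u_nu_0,u_1,\dots,u_{n-1}\in\geol(G_i,X_i)$ when $n\ge 1$, and $u_0\in\geocl(G_i,X_i)$ when $n=0$. Since cyclic permutations of $w$ induce cyclic permutations of $\rho_i(w)$, both conditions follow directly from $w\in\geocpl(G,X)$: the first by rotating so that the $\$$ preceding $u_n$ moves to the end, the second by applying the vertex-group hypothesis. Your argument can be repaired by replacing the per-syllable claims with this per-vertex case split.
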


\begin{proof}
The containment $\geocpl(G,X) \supseteq \geocl(G,X)$
is immediate from the definitions.  

For each index $i$, let 
$\geol_i:= \geol(G_i,X_i)$, and
let $\$$ be a symbol
not in $X_i$.  Define the map 
$\rho_i:X^* \rightarrow (X_i \cup \$)^*$ 
by, for $a \in X_j$, setting
 $\rho_i(a) := a$ if $i=j$,
$\rho_i(a) := \1$ (the empty word) if  the
vertices $i$ and $j$ are adjacent in the
defining graph $\Lambda$ of the graph product,
and finally $\rho_i(a) := \$$ if
the vertices $i$ and $j$ are neither equal
nor adjacent.
In~\cite[Props.~3.3,\,3.5]{ciobanuhermiller},
the first two authors show that, for $w \in X^*$,
\begin{eqnarray*}
&&\!\!\!\!\!\!w \in \geocl(G,X)  \iff \quad\forall i,\\
&&\rho_i(w) \in \geocl(G_i,X_i)  \cup 
\{u_0 \$ u_1 \cdots \$ u_n \mid n \ge 1
\text{ and } u_nu_0,u_1,...,u_{n-1} \in \geol_i\};
\end{eqnarray*}
moreover, $w$ is geodesic if and only if
$\rho_i(w) \in \geol_i (\$ \geol_i)^*$ for all $i$.

Suppose now that
$w\in\geocpl(G,X)$.
For each index $i$, we have 
$\rho_i(w) = u_0 \$ u_1 \cdots \$ u_n$ with
$n \ge 0$ and each $u_j \in \geol_i$.
Note that whenever $w'$ is a cyclic permutation
of $w$, then $\rho_i(w')$ is a cyclic
permutation of $w'$.  In particular there
is a cyclic permutation $w'$ of $w$ such
that $\rho_i(w') = u_nu_0 \$ u_1 \cdots u_{n-1} \$$.
Now since $w' \in \geol(G,X)$ as well, we have
$u_nu_0,u_1,\ldots,u_{n-1} \in \geol_i$.
Suppose further that $n=0$, that is, 
$\rho_i(w)=u_0 \in \geol_i$.  Now for every
cyclic permutation $u'$ of $u_0$, there is
a cyclic permutation $w'$ of $w$ such that
$\rho_i(w')=u'$, and hence we must have
$u' \in \geol_i$.  Thus in this case 
$u_0 \in \geocpl(G_i,X_i)$,
and so by hypothesis also $u_0 \in \geocl(G_i,X_i)$.
Therefore $w \in\geocl(G,X)$,
as required.
\end{proof}

In~\cite{ciobanuhermiller} the first two authors show that 
whenever the languages $\geol,\geocl$ are both regular for
all vertex groups, then both are regular
for the graph product group.  Theorem~\ref{thm:grprod},
however, gives a slightly stronger result about the
structure of the geodesics involved, which we highlight
in the case of right-angled Artin and Coxeter groups.

\begin{corollary}\label{cor:raag}
For every right-angled Artin group 
(respectively, every right-angled Coxeter group)
$G$ with respect to
the Artin (respectively, Coxeter) generators $X$,
a word $w$ is a \geocon\ if and only if
every cyclic conjugate of $w$ is geodesic;
that is, $\geocl(G,X) = \geocpl(G,X)$.
\end{corollary}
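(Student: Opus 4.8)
The plan is to deduce this corollary directly from Theorem~\ref{thm:grprod} by exhibiting right-angled Artin and right-angled Coxeter groups as graph products of suitable vertex groups, and checking that the relevant hypothesis $\geocpl(G_i,X_i) = \geocl(G_i,X_i)$ holds for those vertex groups. First I would recall that a right-angled Artin group is, by definition, the graph product of infinite cyclic groups $G_i = \langle a_i \rangle \cong \Z$ over the defining graph $\Lambda$, with each vertex generating set $X_i = \{a_i, a_i^{-1}\}$; likewise, a right-angled Coxeter group is the graph product of copies of $\Z/2\Z$, with $X_i = \{a_i\}$. So it suffices to verify that for $G_i = \Z$ with $X_i = \{a_i, a_i^{-1}\}$, and for $G_i = \Z/2\Z$ with $X_i = \{a_i\}$, the cyclic geodesic language equals the conjugacy geodesic language.

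Next I would dispatch this vertex-group check, which is where essentially all the (modest) content lies. For $G_i = \Z/2\Z$ the language $X_i^*$ consists only of the words $\1$ and $a_i$, both of which are geodesic and conjugacy geodesic, so $\geocpl(G_i,X_i) = \geocl(G_i,X_i) = \{\1, a_i\}$ trivially. For $G_i = \Z = \langle a_i \rangle$ with $X_i = \{a_i, a_i^{-1}\}$, a word $w$ is geodesic exactly when it is a positive or negative power of $a_i$ (no cancellation), and such a word remains geodesic — indeed, equal up to nothing — under any cyclic permutation; conversely any word containing both $a_i$ and $a_i^{-1}$ has a cyclic permutation in which an $a_i$ is adjacent to an $a_i^{-1}$, hence a non-geodesic cyclic permutation, so it is not a \geocp. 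Thus $\geocpl(\Z, X_i)$ is precisely the set of words $a_i^k$ and $a_i^{-k}$ with $k \ge 0$; and each such word is already of minimal length in its conjugacy class (conjugation is trivial in an abelian group), so it also lies in $\geocl(\Z,X_i)$, and the reverse containment $\geocl \subseteq \geocpl$ is automatic. This gives $\geocpl(G_i,X_i) = \geocl(G_i,X_i)$ for every vertex group in both cases.

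Finally, applying Theorem~\ref{thm:grprod} with $X = \cup_i X_i$ yields $\geocpl(G,X) = \geocl(G,X)$ for the graph product $G$, which is the desired conclusion for all right-angled Artin groups and all right-angled Coxeter groups with respect to their standard generating sets. I do not anticipate a genuine obstacle here: the one point requiring a sentence of care is the ``both $a_i$ and $a_i^{-1}$ occur'' argument for the infinite cyclic vertex group (ensuring that a cyclic permutation actually creates an adjacent cancelling pair), but this is elementary. The only real work in the corollary is recognizing that the standard generating set of a right-angled Artin or Coxeter group is exactly the union of the vertex generating sets, so that Theorem~\ref{thm:grprod} applies verbatim.
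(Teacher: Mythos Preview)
Your proposal is correct and follows exactly the approach the paper intends: recognize right-angled Artin (resp.\ Coxeter) groups as graph products of copies of $\Z$ (resp.\ $\Z/2\Z$) over their standard generators, verify the trivial equality $\geocpl=\geocl$ for these vertex groups, and invoke Theorem~\ref{thm:grprod}. One small slip: for $G_i=\Z/2\Z$ you write that ``the language $X_i^*$ consists only of the words $\1$ and $a_i$,'' but $X_i^*$ is infinite---you mean that $\geol(G_i,X_i)=\{\1,a_i\}$, which is what you actually use.
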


%%%%%%%%%%%%%%%%%%%%%%%%%%%%%%%%%%%%%%%%%%%%%%%%%%%%%%%%%%%%%%%%%%%
%%%%%%%%%%%%%%%%%%%%%%%%%%%%%%%%%%%%%%%%%%%%%%%%%%%%%%%%%%%%%%%%%%%
%%%     Regular geodesic languages 
%%%%%%%%%%%%%%%%%%%%%%%%%%%%%%%%%%%%%%%%%%%%%%%%%%%%%%%%%%%%%%%%%%%
%%%%%%%%%%%%%%%%%%%%%%%%%%%%%%%%%%%%%%%%%%%%%%%%%%%%%%%%%%%%%%%%%%%

\section{Results about groups with regular geodesic languages}
  \label{sec:regulargeod}

In this section we study conjugacy languages associated to particular
families of groups, including word hyperbolic, virtually abelian, locally
testable, (extra) large type Artin, and Garside groups,
for which the geodesic language is regular.  In each of these cases, we prove
that the \wgeocl\ $\geocl$ is also regular.
%\comment{SH:  Put the results about $\mincl$ back into the theorem statements in this section, following email.}
%Since  $\mincl = \geocl \cap \sphl$, we know that $\mincl$ is regular
%whenever $\geocl$ and $\sphl$ are both regular, which is true for all
%of the specific families of groups considered in this section. So we
%shall not state explicitly that $\mincl$ is regular in these cases, and
%refer the reader back to Table ~\ref{table2} for a complete summary of
%the results.

%%%%%%%%%%%%%%%%%%%%%%%%%%%%%%%%%%%%%%%%%%%%%%%%%%%%%%%%%%%%%%%%%%%
%%%%%%%%%%%%%%%%%%%%%%%%%%%%%%%%%%%%%%%%%%%%%%%%%%%%%%%%%%%%%%%%%%%
%%%     Word hyperbolic
%%%%%%%%%%%%%%%%%%%%%%%%%%%%%%%%%%%%%%%%%%%%%%%%%%%%%%%%%%%%%%%%%%%
%%%%%%%%%%%%%%%%%%%%%%%%%%%%%%%%%%%%%%%%%%%%%%%%%%%%%%%%%%%%%%%%%%%

\subsection{Word hyperbolic groups}\label{subsec:hyperbolic}

For a word hyperbolic group $G$, regularity of the
geodesic and shortlex languages
holds for every finite generating set~\cite[Thms. 3.4.5,\,2.5.1]{echlpt}.
In the following we show that the same is true for the
\wgeocl.

\begin{theorem}\label{thm:hypgroups}
Let $G=\langle X \rangle$ be a word hyperbolic group. Then $\geocl(G,X)$ 
and $\mincl(G,X)$ are regular.
\end{theorem}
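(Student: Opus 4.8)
The plan is to leverage the well-known fact that word hyperbolic groups have a geodesic automaton, together with the hyperbolic property that conjugate elements have representatives related by a bounded-length conjugator or a short cyclic shift. Since $\mincl = \geocl \cap \sphl$ and $\sphl$ is already known to be regular by \cite[Thm.~2.5.1]{echlpt}, regularity of $\mincl$ follows immediately once we establish regularity of $\geocl$ (regular languages are closed under intersection). So the real work is to show $\geocl(G,X)$ is regular.

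First I would recall the standard hyperbolic estimate: there is a constant $k = k(G,X)$ such that if $g$ and $h$ are conjugate and both have minimal length up to conjugacy, then either $h$ is obtained from $g$ by conjugating by an element of length at most $k$, or (in the ``long'' case) a shortlex-least geodesic representative of one is a cyclic permutation of a shortlex-least geodesic representative of the other, up to a bounded ``fellow-travelling'' discrepancy. Concretely, one uses that for $w$ a geodesic word representing $g$ with $|g| = |g|\subconj$ large, any conjugate $h$ with $|h| \le |g|$ forces the conjugating path to fellow-travel a cyclic permutation of $w$ within the hyperbolicity constant. This is the same circle of ideas used to prove the conjugacy problem is solvable in linear time for hyperbolic groups (cf.~\cite{echlpt}).

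The key steps, in order: (1) Fix a finite state automaton $M$ accepting $\geol(G,X)$. (2) Show that $w \in \geocl$ iff $w$ is geodesic and no word $v$ with $\pi(v) \in [\pi(w)]\subconj$ has $l(v) < l(w)$; reduce the ``witness'' $v$ to one obtained from $w$ by a conjugation of bounded length or by passing to a cyclic permutation followed by a bounded local modification. (3) Encode this bounded search automatically: build a finite automaton that, reading $w$, tracks enough local data (a bounded-radius ball of ``conjugator candidates'') to detect whether any such shortening exists; here one uses Lemma~\ref{lem:cyclicclosure} to handle the cyclic-permutation part and closure of regular languages under complement and intersection to handle the ``no shorter conjugate exists'' part. (4) Conclude $\geocl$ is regular, hence $\mincl = \geocl \cap \sphl$ is regular.

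The main obstacle will be step (3): making the bounded-search argument genuinely \emph{uniform} and \emph{finite-state}. The difficulty is that a priori the conjugator witnessing $|g|\subconj < l(w)$ could be long, and one must invoke a hyperbolic geometry lemma to replace it by a bounded-length conjugator (or a bounded perturbation of a cyclic permutation of $w$) --- this is where the thin-triangles / fellow-traveller property does the heavy lifting. Once that reduction is in hand, the automaton construction is a routine product-and-subset-construction argument combined with the closure properties of regular languages and Lemma~\ref{lem:cyclicclosure}, so I would present the geometric reduction carefully and treat the automaton bookkeeping more briskly.
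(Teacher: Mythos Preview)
Your proposal is correct and follows essentially the same approach as the paper: reduce via hyperbolic geometry to a bounded-length conjugator acting on cyclic permutations (the paper makes this precise by quoting \cite[Lemma~III.2.9]{BridsonHaefliger}, giving the bounds $8\delta+1$ and $2\delta+1$), then use biautomaticity of $\geol$ (specifically \cite[Lemma~8.1]{gs}, which shows $\{(u,v):v=_G\alpha^{-1}u\alpha\}$ is regular) together with Lemma~\ref{lem:cyclicclosure} and standard closure properties to detect the existence of a shorter conjugate. Your step~(3) is exactly what the paper does, with biautomaticity replacing your informal ``track a bounded-radius ball of conjugator candidates.''
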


\begin{proof}
Since $\mincl(G,X)=\geocl(G,X) \cap \sphl(G,X)$, and
$\sphl$ is regular, it suffices to prove the result for $\geocl$.
We suppose that $G$ has hyperbolicity constant $\delta \geq 0$ (i.e.
geodesic triangles in the Cayley graph are $\delta$-slim).
By \cite[Lemma III.2.9]{BridsonHaefliger}, two words $u,v \in \geocpl(G,X)$
with $\max\{l(u), l(v)\} \geq 8\delta +1$ represent conjugate elements precisely
when there exist $u' \in \cyc(\{u\}), v' \in \cyc(\{v\})$ representing elements
conjugate by a word of length at most $2\delta+1$. 
From this we deduce that a word $v \in \geocpl$ of length at least $8\delta+1$
is outside $\geocl$ precisely when it is in the cyclic closure of the set
$ \bigcup_{|\alpha| \leq 2\delta + 1} L(\alpha) $
where, for $\alpha \in X^*$,
\[ L(\alpha)  := \{ v' \in \geocpl\mid  \exists u' \in
\geocpl \text{ such that } 
\alpha^{-1}u'\alpha =_G v',\  l(v')>l(u'))\}. \]

Since the set $\geocl \cap (\cup_{k=0}^{8\delta+1} X^k)$ is finite,
it is sufficent to prove the regularity of the set
%our result follows once we have shown the regularity of the set
%of words $v$ for which there exists a shorter word $u$ related to $v$
%as in the paragraph above, that is, once we have shown the regularity
%of
\[ \cyc\left(\bigcup_{|\alpha| \leq 2\delta + 1} L(\alpha)\right).\]
Since the family of regular sets is closed under both finite union
and cyclic closure (Proposition~\ref{lem:cyclicclosure}), it is enough
to show the regularity of $L(\alpha)$, for a given word $\alpha$.

Now, since $G$ is hyperbolic it has a biautomatic structure on $\geol$. 
By \cite[Lemma~8.1]{gs}, for any word $\alpha$, the language
\[ L_1(\alpha) := \{ (u,v)\mid  u, v \in \geol(G,X),\ 
v=_G \alpha^{-1}u\alpha\}\]
is regular.

By Proposition~\ref{prop:cyclgeoreggeo}, $\geocpl(G,X)$ is regular and
by standard arguments so are the languages 
\begin{eqnarray*}
L_2&:=&\{(u,v) \mid u,v \in \geocpl(G,X),\  l(v) > l(u) \},\\
L_3(\alpha) &:=& L_1(\alpha) \cap L_2, \hspace{.5in} \text{and}\\
L_4(\alpha) &:=&
\{ v \in \geocpl\mid \exists u \in \geocpl,\ (u,v) \in L_3(\alpha) \}.
\end{eqnarray*}
We see that $L_4(\alpha)=L(\alpha).$
\end{proof}

In contrast, for the \wsphcl,
Rivin~\cite{rivin},\,\cite{rivin04}
and Ciobanu and Hermiller~\cite{ciobanuhermiller}
have given %infinite families of
examples of word hyperbolic groups
and specific generating sets for which the  
the \wsphcs\ is not rational,
and so $\sphcl$ is not regular for any 
total ordering of the generators.
More precisely, $\sphcs$ was proved non-rational for
free products of the form $\Z* \Z$ and $\Z/m\Z * \Z/n\Z$ with $m,n>2$,
using as generators in the first case cyclic generators for the two copies
of $\Z$ and in the second case all nontrivial elements of the two finite
cyclic groups.

%%%%%%%%%%%%%%%%%%%%%%%%%%%%%%%%%%%%%%%%%%%%%%%%%%%%%%%%%%%%%%%%%%%
%%%%%%%%%%%%%%%%%%%%%%%%%%%%%%%%%%%%%%%%%%%%%%%%%%%%%%%%%%%%%%%%%%%
%%%     
%%%%%%%%%%%%%%%%%%%%%%%%%%%%%%%%%%%%%%%%%%%%%%%%%%%%%%%%%%%%%%%%%%%
%%%%%%%%%%%%%%%%%%%%%%%%%%%%%%%%%%%%%%%%%%%%%%%%%%%%%%%%%%%%%%%%%%%

\subsection{Virtually abelian groups}\label{subsec:virtuallyabelian}

In~\cite[Props.~4.1,\,4.4]{neushap} Neumann and Shapiro show that
for every virtually abelian group, there is a generating set
for the group such that the corresponding set of geodesic words
is regular, and in~\cite[Prop.~6.3]{HHR}, Hermiller, Holt, and Rees
strengthen this result by showing that the geodesic words are
a piecewise testable language.
In the following we adjust that proof to show that
for some choice of generators
the language of conjugacy geodesics is also piecewise testable.

\begin{definition}\label{def:pw}
Let $A$ be a finite alphabet.

(1) A subset $L$ of $A^*$ is called \textit{piecewise testable} if it is
defined by a regular expression that combines terms of the form $A^*a_1A^*a_2
\dots A^* a_k A^*$ using the Boolean operations of union, intersection and
complementation, where $k\geq 0$ and each $a_i \in A$.

(2) A subset $L$ of $A^*$ is called \textit{piecewise excluding} 
if there is a finite set of strings $W \subset A^*$ with the property 
that a word $w \in A^*$ lies in $L$ if and only if $w$ does not 
contain any of the strings in $W$ as a not necessarily consecutive 
substring. In other words,
$$L=( \cup_{i=1}^n \{A^* a_{i_1} A^*a_{i_2} A^* \dots A^* a_{i_{l_i}}A^*\})^c,$$
where $W=\{a_1, \dots, a_n\}$ and $a_i=a_{i_1} a_{i_2} \dots a_{i_{l_i}}$ 
for $1 \leq i \leq n$.
\end{definition}

According to Definition~\ref{def:pw}, piecewise excluding 
languages are also piecewise testable.

\begin{proposition}\label{prop:virtabcgeo}
Let $G$ be a virtually abelian group. There exists a finite generating set 
$Z$ for $G$ such that $\geocl(G,Z)$ is piecewise testable. 
Furthermore, there is an ordering of $Z$ with respect to which $G$ is
\wslex\ automatic and $\mincl(G,Z)$ is regular.
\end{proposition}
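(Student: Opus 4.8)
The plan is to build on the structure theory of virtually abelian groups and the piecewise-testability result for geodesics in \cite[Prop.~6.3]{HHR}. Recall the setup there: a virtually abelian group $G$ has a finite-index free abelian normal subgroup $A \cong \Z^n$, and $G$ decomposes as a finite union of cosets $t_j A$; choosing $Z$ to contain a symmetric generating set for $A$ together with coset representatives, one arranges that geodesic words have a controlled ``normal form'' in which a bounded prefix (recording the coset and a bounded correction term) is followed by a geodesic word in $A$ for the appropriate sublattice, and geodesity of the $A$-part is detected by forbidding a finite set of scattered subwords. I would first recall this machinery and fix the generating set $Z$ accordingly, being careful (as the $\mincl$ statement will require) to also fix a compatible total order on $Z$.

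Next I would reduce the conjugacy-geodesic question to a geodesic question on finitely many ``twisted'' sublattices. The key point is that conjugation in $G$ acts on $A$ through the finite quotient $Q = G/A$: for $g = t_j a$ with $a \in A$, the conjugates of $g$ that lie in the coset $t_{j'} A$ are governed by the action of $Q$ on $A$ and by commutators, so $|g|_c$ is computed by minimizing an $A$-length over an orbit, which amounts to a lattice-minimization problem of exactly the type handled in \cite{HHR}. I would show that, for the chosen $Z$, a word $w$ is conjugacy-geodesic precisely when (i) $w$ is geodesic in the \cite{HHR} sense (a piecewise-excluding condition) and (ii) the ``coset-and-correction'' prefix of $w$ is one of a finite list of admissible prefixes (those minimizing length within the conjugacy class), and (iii) no scattered subword witnesses that the $A$-part could be shortened after applying an element of the finite stabilizer/orbit data. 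Each of (i), (ii), (iii) is a piecewise-testable condition: (i) and (iii) are piecewise-excluding by the same forbidden-scattered-subword argument as in \cite{HHR}, and (ii), a constraint on a bounded prefix, is trivially piecewise testable. Since piecewise testable languages are closed under finite Boolean combinations (Definition~\ref{def:pw}), intersecting these gives that $\geocl(G,Z)$ is piecewise testable.

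For the last sentence, I would fix the total order on $Z$ so that the $A$-generators are ordered to make the \cite{HHR} normal forms shortlex-least, and recall (or supply, as promised in the introduction) that with this ordering $G$ is \wslex\ automatic — this is essentially the same coset-decomposition-plus-lattice argument, and the shortlex language $\sphl(G,Z)$ is regular. Then $\mincl(G,Z) = \geocl(G,Z) \cap \sphl(G,Z)$ is an intersection of a piecewise testable (hence regular) language with a regular language, so it is regular.

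The main obstacle I expect is step two: making precise how $|g|_c$ is computed as a minimum of $A$-lengths over the (finite) conjugacy-orbit data, and checking that this minimization is realized by finitely many admissible prefixes whose selection, together with the residual shortening possibilities, is captured by finitely many forbidden scattered subwords rather than by an unbounded comparison. This is exactly the place where the piecewise (rather than merely regular) structure could break, and it will require care with the interplay between the finite quotient action, the commutator terms pushing into $A$, and the lattice geometry underlying the \cite{HHR} argument.
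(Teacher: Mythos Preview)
Your high-level strategy (build on the \cite{HHR} generating set, reduce conjugacy to finite orbit data, intersect with $\sphl$ for $\mincl$) is the paper's strategy, and your last paragraph is fine. But your structural picture of geodesics is wrong, and the two ingredients that actually make the piecewise-testability go through are missing from your outline.

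First, geodesics over the \cite{HHR} generating set $Z = X \cup Y$ (with $X \subset N$ the abelian part and $Y$ the coset representatives) do \emph{not} have the form ``bounded prefix followed by an $A$-geodesic.'' The paper arranges the generating set so that geodesics lie in $X^* \cup X^*YX^* \cup X^*YX^*YX^*$: at most two $Y$-letters, but they can sit anywhere. Only the shortlex representatives live in $X^* \cup X^*Y$. So your condition~(ii), a finite list of admissible prefixes, cannot detect conjugacy-geodesity of an arbitrary word.

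Second, the decisive property of the generating set---which you never invoke---is that $X$ is taken to be closed under conjugation by all of $G$. This yields two things. For words with no $Y$-letter it gives immediately that $\geocl \cap X^* = \geol \cap X^*$: if $w \in X^*$ is geodesic but $(ny)^{-1}w(ny)$ is shorter, then (as $N$ is abelian) this equals $y^{-1}wy$, and replacing each letter of a short representative $u$ by its $y^{-1}$-conjugate gives a word in $X^*$ shorter than $w$ representing $\pi(w)$, a contradiction. For words containing $Y$-letters it gives the ``$Y$-shuffle'' $ayxb \to ax^{y^{-1}}yb$, which moves $Y$-letters past $X$-letters without changing length, group element, or membership in $\geocl$. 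One then partitions $\geocl$ by the (at most two) $Y$-letters present, shuffles each piece to the form $wr$ or $w_1rw_2s$ with $w,w_i \in X^*$, and runs the Dickson-lemma argument (finitely many $\le$-minimal tuples in $\N_0^m$) on the complement to extract a finite forbidden set of scattered subwords. That is where the piecewise-excluding structure comes from; without conjugation-closure of $X$ and the shuffle, your condition~(iii) has no finite witness set and the step you flag as the ``main obstacle'' does not close.
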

\begin{proof}
This argument follows closely the proof of \cite[Prop.~6.3]{HHR}, 
so we omit some of the details.

Let $G$ be a virtually abelian group, and let
$N \lhd G$ be abelian of 
finite index in $G$ with a finite generating set $A$. 
Let $T \cup \{1\}$ be a transversal of $N$ in $G$.

We build the generating set $Z$ for $G$ as follows.
Let $Y := T^{\pm 1}$.
Let $X'$ be the set of all $x \in N$ such that
$x =_G w \neq_G 1$ for some 
$w \in Y^*$ with $l(w) \le 4$.
Finally, let $X$ be the closure of the set $A \cup X'$ in $G$ under
inversion and conjugation, and let 
$Z := X \cup Y$.  Then 
\begin{mylist}
\item[(i)] $X \subset N$, $Y \subset G \setminus N$,
\item[(ii)] both $X$ and $Y$ are closed under inversion,
\item[(iii)] $X$ is closed under conjugation by elements of $G$, 
\item[(iv)] $Y$ contains at least one representative of each nontrivial coset of $N$
in $G$, and 
\item[(v)] if $w =_G xy$ with $w \in Y^*$, $l(w) \le 3$, 
$x \in N$ and $y \in Y \cup \{\1\}$, then $x \in X$.
\end{mylist}
Write the finite set $X$ as $X=\{x_1,...,x_m\}$.
For each $x \in X$ and $y \in Y$, let $x^y$ denote
the generator in $X$ that represents the group element $y^{-1}xy$.
Similarly if $v=x_{i_1} \cdots x_{i_k} \in X^*$, 
the symbol $v^y$ denotes the word $x_{i_1}^y \cdots x_{i_k}^y$.
An immediate consequence of  properties (i)-(v) above is that
the language $L:=\geol(G,Z)$ of geodesics of $G$ over 
$Z$ satisfies the property that
$L \subseteq X^{\star} \cup X^{\star} Y  X^{\star} \cup X^{\star} Y
X^{\star}Y  X^{\star}.$

We can now establish the \wslex\ automaticity of $G$ with respect to a
suitable (total) ordering of $Z$. Choose any such ordering in which the
generators in $X$ precede those in $Y$.
Note that every element of $G$ has a geodesic representative in the
set $X^{\star} \cup X^{\star} Y$ and since, by properties (iii) and (v),
elements of the form $yz$ with $y \in Y$ and $z \in Z$ have an
alternative representative
of the form $x'y'$ with $x' \in X \cup \{\1\}$ and 
$y' \in Y \cup \{\1\}$, we see that
all \wslex\ minimal representatives have this form.
Let $L'$ be the set of words $w$ or $wy$ of this form in which
$w \in X^*$ is \wslex\ minimal.
By \cite[Thm. 4.3.1]{echlpt}, a finitely generated abelian group is
\wslex\ automatic with respect to any finite ordered generating set.
It follows (again from properties (iii) and (v)) that $L'$ is the language
of an automatic structure for $G$. The closure of $Y$ under inversion
means that elements of $G$ may have two representatives in $L'$, but by
\cite[Thm. 2.5.1]{echlpt}, we get the \wslex\ automatic structure for $G$ by
intersecting $L'$ with $\sphl(G,Z)$.   

Turning now to the set $\wdl:=\geocl(G,Z)$ of conjugacy geodesics
of $G$ over $Z$, we have $\wdl:=\geocl(G,Z) \subseteq L$,
and so $\wdl$ can be partitioned as the union of the subsets
$\wdl_0:=\wdl \cap X^*$, $\wdl_1 := \wdl \cap X^*YX^*$, and
$\wdl_2:=\wdl \cap X^*YX^*YX^*$.
We show that each $\wdl_i$ is a
piecewise testable language.
%\comment{SH:  Changed all tilde's to just cover letter and not subscript, following emails.}

We start by showing that $\wdl_0=L\cap X^*$. 
Suppose that $w\in L\cap X^*$ is not a conjugacy geodesic,
and write $w=a_1 a_2 \dots a_n$ with $a_i \in X$. 
Then there exist $n \in N$ and 
$y\in Y$ such that the element $(ny)^{-1}wny$ of $G$ is represented
by a word $u$ that is shorter than $w$. Since $N$ is 
abelian we have $u =_G (ny)^{-1}wny =_G y^{-1}wy$. 
Then $w=_G u^{y^{-1}}$ and by properties (ii)-(iii) 
the formal conjugate $u^{y^{-1}}$ is a word in $X^*$
satisfying 
$l(u^{y^{-1}}) = l(u)<l(w)$, which contradicts the fact 
that $w$ is geodesic. By the proof of~\cite[Prop.~6.3]{HHR}, 
$L\cap X^*$ is piecewise excluding, and hence also 
piecewise testable, so $\wdl_{0}$ has the same properties.

An operation on words over $Z$ given
by replacement $ayxb \rightarrow ax^{y^{-1}}yb$
with $a,b \in Z^*$, $x \in X$, and $y \in Y$
is called a {\em $Y$-shuffle}.
An operation on words over $X$ given by a replacement
$ux_ix_jv \rightarrow ux_jx_iv$ is a {\em shuffle}.
Note that whenever a word $w$ can be obtained from a word $v$
by means of finitely many applications of these operations,
then $v =_G w$, $v$ and $w$ are words over $Z$ of
the same length, and furthermore $v \in \wdl_i$ if
and only if $w \in \wdl_i$.

Next we turn to $\wdl_1$. 
In this case we further partition the set 
$\wdl_1 = \cup_{r \in Y} \wdl_{1,r}$
where $\wdl_{1,r}:=\{v_1rv_2 \in \wdl \mid v_1,v_2 \in X^*\}$
for each $r$ in $Y$.
Let $\ldl_{1,r} := \{w \in X^* \mid wr \in \wdl\}$. It is immediate to see that $\wdl_{1,r}$ 
is the set of all words that can
be obtained from words of the form 
$wr$ with $w \in \ldl_{1,r}$ using $Y$-shuffles.

Next let
%$\wt{U}_{1,r} := \ldl_{1,r} \cap x_1^*\cdot x_2^* \cdots x_m^*$,
$\wt{U}_{1,r} := \ldl_{1,r} \cap x_1^* x_2^* \cdots x_m^*$,
and let
\[U_{1,r} := \{(n_1,...,n_m) \in \N_0^m \mid
x_1^{n_1} \cdots x_m^{n_m} \notin \wt{U}_{1,r}\}.\] 
Now $U_{1,r}$ has only finitely many elements that are minimal under the
ordering on $\N_0^m$ defined
by  $(n_1,...,n_m) \le (p_1,...,p_m)$ if and only if
$n_i \le p_i$ for all $1 \le i \le m$
(for a proof see, for example, \cite[Lemma 4.3.2]{echlpt}); 
let $S_{1,r}$ be this finite set of minimal elements.
Let $\wt{S}_{1,r}$ be the set of all words that
can be obtained from elements of the set
$\{x_1^{n_1} \cdots x_m^{n_m} \mid (n_1,...,n_m) \in S_{1,r}\}$
using shuffles, and 
let $\wt{S}_{1,r}'$ be the set of all words
that can be obtained from elements of
the set $\{wr \mid w \in \wt{S}_{1,r}\}$
via $Y$-shuffles.  Then the set 
$\wdl_{1,r}$ of $Y$-shuffles
of words in $\ldl_{1,r} r$
%$\ldl_{1,r} \cdot r$
is exactly the set of
all words in $X^*$ that do not contain a piecewise
subword lying in the finite set $\wt{S}_{1,r}'$.
So $\wdl_{1,r}$ is piecewise excluding,
and hence also piecewise testable, for 
each $r \in Y$.  Therefore $\wdl_1$ is also piecewise
testable.  

The proof that $\wdl_2$ is piecewise testable starts by partitioning 
$\wdl_2 = \cup_{r,s \in Y} \wdl_{2,r,s}$
where $\wdl_{2,r,s}:=\{v_1rv_2sv_3 \in \wdl \mid v_1,v_2,v_3 \in X^*\}$
for each $r,s$ in $Y$.
The proof is similar to that for $\wdl_1$, but
a little more complicated, and we omit the details.

Having established that $\geocl$ and $\sphl$ are regular,
it follows that their intersection $\mincl$ is also regular.
\end{proof}

An immediate consequence of the proof above
is that for abelian groups
piecewise testability of the \wgeocl\ 
holds for every generating set.

\begin{corollary}\label{cor:fgab}
If $G=\langle X \rangle$ is abelian,
then $\geocl(G,X)$ is piecewise testable.
\end{corollary}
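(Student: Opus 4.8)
The plan is to observe that an abelian group is trivially the degenerate case of the construction in Proposition~\ref{prop:virtabcgeo}, so that almost all of the machinery collapses, but to be careful about the fact that the corollary asserts the result for \emph{every} generating set, whereas the proposition only produces one distinguished generating set $Z$. So the real content is a change-of-generators argument. First I would set $G = \langle X \rangle$ abelian, and take $N = G$, so that the transversal $T \cup \{1\}$ is just $\{1\}$, giving $Y = \emptyset$. Then in the notation of the proposition, $X' = \emptyset$ as well, and the closure under inversion and conjugation is just the closure of $A$ under inversion; since conjugation is trivial in an abelian group, $X$ is simply any inverse-closed generating set and $Z = X$. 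With $Y$ empty, the stratification $\wdl = \wdl_0 \cup \wdl_1 \cup \wdl_2$ degenerates to $\wdl = \wdl_0 = L \cap X^*$, and the proposition's argument shows directly that $\geocl(G,X) = L \cap X^*$ is piecewise excluding, hence piecewise testable. Moreover, since there is no conjugation to worry about, this works verbatim for \emph{any} inverse-closed finite generating set $X$ of $G$, not just a specially chosen one.

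Concretely, the key steps are: (1) note that for $G$ abelian, $|g|\subconj = |g|$ for every $g$, because conjugation is trivial, so $\geocl(G,X) = \geol(G,X)$ for every generating set $X$; (2) invoke \cite[Prop.~6.3]{HHR}, which states precisely that $\geol(G,X)$ is piecewise testable (indeed piecewise excluding) when $G$ is finitely generated abelian, for a suitable generating set --- here I would need to check that the cited result, or its proof as reproduced inside the proof of Proposition~\ref{prop:virtabcgeo} for the sublanguage $L \cap X^*$, in fact applies to an arbitrary inverse-closed generating set and not only to the distinguished one; (3) conclude. The cleanest route is probably to quote the already-reproduced fact from the proof of Proposition~\ref{prop:virtabcgeo}: that proof shows $L \cap X^*$ is piecewise excluding by exhibiting a finite set of forbidden piecewise subwords coming from the minimal elements of the complement set $\{(n_1,\dots,n_m) : x_1^{n_1}\cdots x_m^{n_m} \notin \geol \cap x_1^*\cdots x_m^*\}$ together with shuffles; none of that argument uses anything about the generating set beyond its being finite and inverse-closed and $G$ being abelian.

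The main obstacle --- really the only subtlety --- is making sure the quantifier over generating sets is handled honestly. Proposition~\ref{prop:virtabcgeo} is stated only for \emph{one} generating set $Z$ because in the genuinely virtually abelian case the set $Y$ of coset representatives and the closure under $G$-conjugation genuinely depend on the chosen structure; but in the abelian case both of those ingredients are vacuous, so the dependence disappears. I would therefore state explicitly in the proof that, by reading the argument of Proposition~\ref{prop:virtabcgeo} with $N = G$ and $Y = \emptyset$, every step that referenced properties (i)--(v) or the sets $Y$, $X'$ becomes trivial, and the surviving argument --- that $\geol(G,X) \cap X^* = \geol(G,X)$ is piecewise excluding via finitely many forbidden piecewise subwords --- is valid for any finite inverse-closed $X$. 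Then, since $\geocl(G,X) = \geol(G,X)$ for abelian $G$, we get that $\geocl(G,X)$ is piecewise testable for every such $X$, as claimed.
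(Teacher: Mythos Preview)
Your proposal is correct and matches the paper's intent: the paper offers no separate proof, stating only that the corollary is ``an immediate consequence of the proof above,'' and your unpacking of that---reading the proof of Proposition~\ref{prop:virtabcgeo} with $N=G$, $Y=\emptyset$, so that $Z=X$ for any inverse-closed $X$ and only the $\wdl_0$ case survives---is exactly the intended argument. Your additional observation that $\geocl(G,X)=\geol(G,X)$ for abelian $G$ is a clean shortcut that bypasses even the $\wdl_0$ argument, reducing the corollary directly to the piecewise-excluding result for $\geol$ from \cite{HHR}; this is slightly simpler than tracing through the proposition's proof, but both routes are essentially the same content.
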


%%%%%%%%%%%%%%%%%%%%%%%%%%%%%%%%%%%%%%%%%%%%%%%%%%%%%%%%%%%%%%%%%%%
%%%%%%%%%%%%%%%%%%%%%%%%%%%%%%%%%%%%%%%%%%%%%%%%%%%%%%%%%%%%%%%%%%%
%%%     Locally testable
%%%%%%%%%%%%%%%%%%%%%%%%%%%%%%%%%%%%%%%%%%%%%%%%%%%%%%%%%%%%%%%%%%%
%%%%%%%%%%%%%%%%%%%%%%%%%%%%%%%%%%%%%%%%%%%%%%%%%%%%%%%%%%%%%%%%%%%

\subsection{Groups with locally testable geodesics}\label{sec:ltgeod}

This section is devoted to those groups, previously studied in
\cite{HHRloctest}, for which
$\geol(G,X)$ is locally testable.  Free groups, free abelian groups
and dihedral Artin groups over
the standard generators provide examples, as do direct
products of such groups.

Informally, whenever $k$ is a positive integer, a language $L$ is
\klt{$k$} if membership of a word in $L$ depends
on the nature of its subwords of length $k$
(where by a subword of 
a word $a_1a_2\cdots a_n$, we mean either the
empty word or a contiguous substring  $a_i a_{i+1}\cdots a_j$ for some
$1 \le i \le j \le n$); a language $L$ is locally testable if it is \klt{$k$} for some $k$.

More precisely, $k$-local testability is defined
as follows.
Let $k > 0$ be a natural number.
For $u \in X^*$ of length at least $k$, let $\pre_k(u) $ be the prefix
of $u$ of length $k$, let $\suf_k(u) $ be the suffix
of $u$ of length $k$, and let $\sub_k(u) $ be the set
of all subwords of $u$ of length $k$.  If $l(u)<k$,
then we define $\pre_k(u)=u$, $\suf_k(u)=u$, and 
$\sub_k(u)=\emptyset$.
We define an equivalence relation $\sim_k$ on $X^*$ to relate $u$ and $v$
when 
$\pre_{k-1}(u)=\pre_{k-1}(v)$, $\suf_{k-1}(u)=\suf_{k-1}(v)$, 
and $\sub_{k}(u)=\sub_{k}(v)$.
A subset $L \subseteq X^*$
is defined to be {\it \klt{$k$}} \cite[p.~247]{BrzozowskiSimon}
if $L$ is a union of 
equivalence classes of $\sim_k$. 
We refer the reader to
\cite{HHRloctest} and the references cited there
for additional information.
Here we show the following.

\begin{theorem}\label{thm:lt}
For a group $G = \langle X \rangle$, if
$\geol(G,X)$ is locally testable, then 
\linebreak
$\geocl(G,X) = \geocpl(G,X) \setminus A$, where $A$ is a finite set.
Hence  $\geocl(G,X)$ is regular.
\end{theorem}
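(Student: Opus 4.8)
The key geometric input is the same principle used in the hyperbolic case (Theorem~\ref{thm:hypgroups}): a word fails to be a conjugacy geodesic precisely when some element conjugate to it has a strictly shorter representative. The plan is to show that when $\geol(G,X)$ is $k$-locally testable, any such "shortening conjugation" can be witnessed by a conjugator of bounded length, and moreover that the only {\geocp}s that are \emph{not} \geocon\ words are those of bounded length; this forces the difference $\geocpl(G,X)\setminus\geocl(G,X)$ to be finite.

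First I would recall that $\geocpl(G,X)$ is regular by Proposition~\ref{prop:cyclgeoreggeo}, so it suffices to prove that $A:=\geocpl(G,X)\setminus\geocl(G,X)$ is finite; regularity of $\geocl(G,X)$ then follows since it is $\geocpl(G,X)$ minus a finite set. Let $w\in\geocpl(G,X)$ be a \geocp\ that is not a \geocon, so $|\pi(w)|\subconj < l(w)$. The main step is to argue that if $l(w)$ is large enough (say $l(w)\ge 2k$, or some explicit bound depending only on $k$), then in fact $w$ \emph{is} a \geocon, giving the contradiction. The idea: since $w$ is a \geocp, every cyclic permutation of $w$ is geodesic, so all cyclic permutations lie in $\geol(G,X)$; because $\geol$ is $k$-locally testable, membership is determined by $\pre_{k-1}$, $\suf_{k-1}$, and the set of length-$k$ subwords. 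For a long cyclic word, one can "pump" — insert a repeated block — or conversely cut out a block, in a way that preserves the $\sim_k$-class of all cyclic rotations and hence keeps the resulting words geodesic. Running this argument on both $w$ and on a hypothetical shorter conjugate should produce a contradiction with $w$ being shortenable under conjugacy: roughly, if $w$ (long) could be conjugated to something shorter, a pumped-up version of that shorter word would also be geodesic and conjugate to a pumped-up $w$, and iterating/comparing lengths derives the contradiction. The cleanest formulation is probably: show directly that for $w$ a \geocp\ with $l(w)$ sufficiently large, $w$ and all its cyclic permutations are geodesics \emph{and} any conjugate $v$ with $l(v)<l(w)$ would, via local testability applied to $v$ and its rotations together with a local-testability-stable normal form, itself be forced to have length $\ge l(w)$.

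The hard part will be making the "pumping on cyclic words respects local testability, and this contradicts shortenability under conjugacy" argument precise — in particular handling the interaction between the conjugator and the cyclic structure, since conjugation is not just a cyclic permutation. A useful intermediate device is to work with the bi-infinite periodic word $w^{\infty}$ (or the cyclic word $w$ regarded combinatorially): local testability means geodesicity of all rotations of $w$ is governed by finitely much data, and two periodic words with the same such data are "equivalent" for the purposes of geodesicity, so one can replace a long period by a shorter one having the same local data whenever the period exceeds a threshold. I would then show this replacement descends to an honest shortening, contradicting that $w$ is a geodesic on its conjugacy class only if $w$ were already non-geodesic — careful bookkeeping here is where the real work lies. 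Once the length bound on $A$ is in hand, finiteness of $A$ is immediate since $X$ is finite, and the theorem follows.
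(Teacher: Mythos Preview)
Your overall strategy is right: show that $A=\geocpl\setminus\geocl$ contains only words of bounded length, hence is finite. But the mechanism you sketch for the contradiction is not the one that works, and the ``bounded conjugator'' idea imported from the hyperbolic proof is a red herring here. The paper's argument rests on a single external lemma (\cite[Lemma~5.2]{HHRloctest}): when $\geol(G,X)$ is locally testable there is an $N$ such that every word $w$ with $l(w)>N$ has a cyclic permutation $\widetilde w$ with the property that \emph{if $\widetilde w$ is geodesic then every power $\widetilde w^{\,j}$ is geodesic}. This is the precise ``pumping'' statement you are reaching for, and it does all of the work.

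Once you have that lemma the contradiction is elementary and involves neither pumping the shorter conjugate nor bounding the conjugator. Take $w\in\geocpl$ with $l(w)>N$ and pass to $\widetilde w$; since $w$ is a \geocp, $\widetilde w$ is geodesic, so every $\widetilde w^{\,j}$ is geodesic and $l(\widetilde w^{\,j})=j\,l(\widetilde w)$. If $\widetilde w=_G u^{-1}vu$ with $l(v)<l(\widetilde w)$, then $\widetilde w^{\,j}=_G u^{-1}v^{j}u$, so $|\widetilde w^{\,j}|\le 2l(u)+j\,l(v)<j\,l(\widetilde w)=l(\widetilde w^{\,j})$ once $j>2l(u)$, contradicting geodesicity of $\widetilde w^{\,j}$. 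Note that you never need $v^{j}$ to be geodesic; only the trivial bound $l(v^{j})\le j\,l(v)$ is used. Your suggestion to pump $v$ as well, and your attempt to ``replace a long period by a shorter one having the same local data'', point in the wrong direction: the crux is that $\widetilde w$ can be pumped \emph{up} while remaining geodesic, not that anything can be shortened. Without invoking or reproving \cite[Lemma~5.2]{HHRloctest}, your proposal does not yet contain a proof.
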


\begin{proof}
The key ingredient of this proof is \cite[Lemma 5.2]{HHRloctest}, which can be
reformulated as follows: if $G  = \langle X \rangle$ and
$\geol(G,X)$ is locally testable, then there exists $N\in \N$
such that, for each word $w \in X^*$ with $l(w)>N$, there is a cyclic
permutation $\widetilde{w}$ of $w$ with the property that, if $\widetilde{w}$
is a geodesic, then $\widetilde{w}^j$ is a geodesic for all $j \geq 1$.

Assume that $\geol(G,X)$ is locally testable.
We claim that each $w \in \geocpl(G,X)$ with $l(w)> N$ is in fact in
$\geocl(G,X)$. Notice first that the cyclic
geodesic $w$ lies in $\geocl(G,X)$ if and only if each of
its cyclic permutations is in $\geocl(G,X)$.
Now let $\widetilde{w}$ be the cyclic permutation of $w$ provided 
by \cite[Lemma 5.2]{HHRloctest}. 
Suppose that $\widetilde{w}\not\in \geocl(G,X)$.
Then there exist $u, v \in X^*$ such that $\widetilde{w} =_G u ^{-1}v u$ with
$l(v) < l(\widetilde{w})$. Notice that 
$l(\widetilde{w}^j)=j\ l(\widetilde{w})$ for
all $j \geq 1$.
Then
\[|u^{-1} v^j u|\leq 2\,l(u) +l(v^j) = 2\,l(u) +j\,l(v) \leq
2\,l(u)+(j\,l(\widetilde{w})-j),\]
which for $j> 2\,l(u)$ leads to $|\tilde w^j|=|u^{-1} v^j u| <
j\,l(\widetilde{w})=l(\widetilde{w}^j)$,
contradicting the fact that $\widetilde{w}^j$ is a geodesic.
\end{proof}

%%%%%%%%%%%%%%%%%%%%%%%%%%%%%%%%%%%%%%%%%%%%%%%
%%%%%FFTP groups
%%%%%%%%%%%%%%%%%%%%%%%%%%%%%%%%%%%%%%%%%%%%%%%%%%%%%%%%%%%%%%%%%%%
\subsection{Groups with the falsification by fellow traveler property}
For a group $G = \langle X \rangle$ and $k \ge 0$,
we say that the words $w,w' \in X^*$ {\em $k$-fellow travel}
(and write $w\asymp_k w'$),  
if, for each $i \geq 0$, $|\pre_i(w)^{-1}\pre_i(w')| \leq k$.
We say that $G = \langle X \rangle$ satisfies the
{\em falsification by fellow traveler property} (FFTP) if,
for some fixed constant $k$, any nongeodesic word $w$ $k$-fellow
travels with a shorter word. We will write $k$-FFTP if the above
constant needs to be mentioned explicitly.
In~\cite[Prop.~4.1]{neushap} Neumann and Shapiro showed that
a group $G$ satisfying the FFTP over a generating set $X$ 
has $\geol(G,X)$ regular.
They also show that the FFTP is dependent upon
the generating set. 
In order to apply this property to conjugacy languages, we begin
with a strengthening of the result of \cite[Prop.~4.1]{neushap}.

\begin{proposition}\label{prop:FFTPngeol}
Suppose that $G=\langle X \rangle$ satisfies the FFTP.
Then for every $K\geq 0$ the set 
\[ \ngeol_K := \{ w \in X^*: |w| \geq l(w)-K\} \]
is regular.
\end{proposition}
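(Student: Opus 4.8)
The plan is to mimic the standard argument of Neumann--Shapiro showing FFTP implies regularity of $\geol$, but carried out in a product automaton that tracks the discrepancy between the length of the word read so far and the distance in the Cayley graph of the group element it represents. Fix the FFTP constant $k$, and set $B$ to be the (finite) ball of radius $k$ in $G$ around the identity. First I would recall the key consequence of FFTP, proved exactly as in \cite[Prop.~4.1]{neushap}: if $w \in X^*$ is any word, then there is a geodesic $w_0$ with $w_0 \asymp_k w$; more precisely, iterating the FFTP, every word $k$-fellow-travels a geodesic representing the same element. In particular, for any prefix $u = \pre_i(w)$, the group element $\pi(u)$ lies within distance $k$ (in fact, I only need a bounded amount, which is what the fellow-traveller gives) of $\pi(\pre_i(w_0))$, and $\pre_i(w_0)$ is a geodesic-prefix, hence of length $|\pi(\pre_i(w_0))|$. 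The upshot I want to extract is: for any word $w$, the quantity $l(\pre_i(w)) - |\pi(\pre_i(w))|$ is non-decreasing in $i$ up to bounded fluctuation, and, crucially, $w$ fails to lie in $\ngeol_K$ (i.e. $l(w) - |\pi(w)| > K$) if and only if at some point a ``witness'' of excess length appears.

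The concrete construction I would use: build a non-deterministic finite automaton whose states record a pair $(g, j)$, where $g \in B_{K+k}$ is a group element (the ``offset'' $\pi(u)^{-1}\pi(u_0)$ between the current prefix $u$ of the input and the current prefix $u_0$ of a guessed geodesic fellow-traveller) and $j \in \{0,1,\dots,K+1\}$ records $\min(K+1, l(u) - l(u_0))$, the accumulated length deficit of the guessed geodesic relative to the input read so far. On reading a letter $x$, the automaton guesses a (possibly empty, possibly length-one) letter for the geodesic track; it updates $g$ by the corresponding group multiplication (rejecting if $g$ would leave $B_{K+k}$), increments the input-length count, updates $j$ accordingly, and must verify along the way (using the offset $g$ and the stored data) that the guessed track is genuinely geodesic --- for this one runs, in a second component of the state, the finite-state geodesic recogniser for the guessed track, which exists because $\geol(G,X)$ is regular by \cite[Prop.~4.1]{neushap}. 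At the end of the input the automaton accepts iff $j \le K$, i.e. the guessed geodesic has length $\ge l(w) - K$, which by FFTP can always be achieved when $w \in \ngeol_K$ and can never be achieved (for a genuine geodesic representative) when $w \notin \ngeol_K$. Since all state components range over finite sets, this is a finite automaton, so $\ngeol_K$ is regular.

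The main obstacle I anticipate is bookkeeping the two things simultaneously and correctly: (a) that the guessed parallel track really is a geodesic (not merely a word), and (b) that the offset $g$ stays bounded so the state set is finite. Point (a) is handled by composing with the geodesic automaton for $G$ over $X$; point (b) is exactly what the $k$-fellow-traveller property buys us --- as long as the input prefix and the guessed geodesic prefix $k$-fellow-travel, their group-element difference lies in $B_k$, and I allow a little more slack ($B_{K+k}$) to absorb the bounded length-deficit window. I would also need to check the boundary/degenerate cases (short words, the empty word, prefixes shorter than the relevant constants) but these are routine. An alternative, slicker packaging: note $\ngeol_K = \bigcup_{j=0}^{K} \{ w : l(w) - |\pi(w)| = j\}$, and it suffices to show each set $E_j := \{w : l(w)-|\pi(w)| = j\}$ is regular; but $E_j = \{w : w =_G w_0 v, \ w_0 \in \geol,\ l(w_0) = l(w) - j,\ w \asymp_k w_0\}$ up to the FFTP reduction, and regularity of this follows by a product construction with the (regular) geodesic language together with the standard fellow-traveller automaton. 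Either route gives the claim; I would present the offset-and-deficit automaton as the cleaner single construction.
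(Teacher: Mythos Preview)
Your overall strategy is sound, but there is a genuine gap in the justification. You assert that ``iterating the FFTP, every word $k$-fellow-travels a geodesic representing the same element,'' and you then size the offset ball as $B_{K+k}$. This is not what iterating FFTP gives. A single application of $k$-FFTP produces a \emph{shorter} word $w_1$ with $w \asymp_k w_1$, but $w_1$ need not be geodesic; iterating $N$ times yields a chain $w = w_0 \asymp_k w_1 \asymp_k \cdots \asymp_k w_N$, and all you can conclude is $w \asymp_{Nk} w_N$. Since $N$ can be as large as $l(w) - |\pi(w)|$, this constant is unbounded over $X^*$, so there is no uniform $k$ for which every word $k$-fellow-travels a geodesic. (For a concrete failure: in $\Z = \langle a \rangle$ the word $a^n a^{-n}$ represents $1$, but its prefix $a^n$ is at distance $n$ from the identity, so it does not $k$-fellow-travel the empty geodesic for any fixed $k$.) Consequently your radius $K+k$ is unjustified, and the automaton as described is not guaranteed to have an accepting run on every $w \in \ngeol_K$.

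The repair is exactly the bound you need but did not state: for $w \in \ngeol_K$ the deficit $l(w)-|\pi(w)|$ is at most $K$, so at most $K$ iterations of FFTP reach a geodesic, and hence $w \asymp_{Kk} w_0$ for some geodesic $w_0$. Replacing $B_{K+k}$ by a ball of radius on the order of $Kk$ makes your construction correct. Your ``slicker'' alternative at the end has the same defect: the claimed description of $E_j$ uses $w \asymp_k w_0$ with $w_0$ geodesic, which again fails unless the constant is enlarged to roughly $jk$.

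The paper uses the same iteration idea but packages it more economically by working with the complement: $w \notin \ngeol_K$ if and only if there exists $w'$ with $w' =_G w$, $l(w') < l(w)-K$, and $w \asymp_{(K+1)k} w'$. The key simplification is that $w'$ is merely the result of $K+1$ FFTP steps and need not be geodesic, so no geodesic-recogniser component is required; regularity then follows from a single standard padded-pair automaton (the $M_\epsilon$ construction). Your route, once the constant is fixed, works too, but carries the extra baggage of verifying geodesicity of the guessed track.
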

\begin{proof}
Suppose that $w \not\in \ngeol_K$. So $|w|\leq l(w)-(K+1)$.
Suppose that $G$ satisfies the $k$-FFTP. Then there is a sequence of words
$w_0=w,w_1,\ldots,w_N$, all representing the same element of $G$ as $w$
where, for each $i$, $w_i\asymp_k w_{i-1}$, $l(w_i)<l(w_{i-1})$, and where
$l(w_N)<l(w)-K$. So, assuming that $N$ is minimal with $l(w_N)<l(w)-K$,
we have  $N\leq K+1$, and hence $w\asymp_{(K+1)k} w_N$.  So 
\[ X^* \setminus \ngeol_K = \{ w: \exists w', 
l(w')<l(w)-K, w \asymp_{(K+1)k} w', w=_G w' \}.\]
This is the projection onto the first coordinate of the
intersection of the padded languages 
$L_1 := \{(w,w') \mid l(w')<l(w)-K\}$ and 
$L_2 :=\{((w,w') \mid w \asymp_{(K+1)k} w', w=_G w' \}$;
see \cite[Section~1.4]{echlpt} for details on languages
of padded pairs.
The regularity of $X^* \setminus \ngeol_K$, and hence also of 
$\ngeol_K$, now follows from the regularity of $L_1$ (which
we leave to the reader) and of $L_2$. 
The set $L_2$ is the regular language accepted by 
the ``standard automaton''
$M_\epsilon$ of \cite[Definition~2.3.3]{echlpt},
associated to the identity element of $G$ and
based on $(W,N)$, where $N$ is the ball of radius $(K+1)k$
centered at the identity in $G$, and $W$ is a finite state
automaton accepting the language $X^*$.
Briefly, given an automaton $M$ with alphabet $X \cup \{\$\}$,
state set $S$,
initial state $s_0$ and transition function 
$\delta:S \times X \cup \{\$\} \ra S$ accepting the
regular language $L(W)\$^*=X^*\$^*$,
then $M_\epsilon$ has alphabet 
$(X \cup \$)^2 \setminus \{(\$,\$)\}$,
state set $S \times S \times N$, and 
initial state  
$(s_0,s_0,\epsilon)$.  When $M_\epsilon$
is in state $(s,t,g)$ and reads a letter
$(a,b)$, the automaton goes to the
state $(\delta(s,a),\delta(s,b),a^{-1}gb)$ if
$a^{-1}gb \in N$, and fails otherwise;
a word is accepted by $M_\epsilon$ if
the automaton finishes in a state of the
form $(s,t,\epsilon)$ where $s$ and $t$
are accept states of $M$.
%\comment{SH:  Added detail here as suggested - not sure if I added enough or too much?}
\end{proof}

\begin{proposition}
\label{prop:FFTPgeocpl}
Suppose that $G=\langle X \rangle$ satisfies the FFTP.
Suppose that, for some fixed $k$,
$L$ is defined to be the set of words $w \in \geocpl(G,X)$
for which the following condition holds:
\[ (v \in \cyc(w)\wedge \alpha \in X^* \wedge l(\alpha)\leq k)
  \Rightarrow |\alpha^{-1}v\alpha| \geq |v|.\]
Then $L$ is regular.
\end{proposition}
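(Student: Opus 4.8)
The plan is to express $L$ as the first-coordinate projection of an intersection of padded regular languages, exactly in the spirit of the proofs of Propositions~\ref{prop:FFTPngeol} and~\ref{thm:hypgroups}. First I would record that $\geocpl(G,X)$ is regular: this is Proposition~\ref{prop:cyclgeoreggeo}, since the FFTP implies $\geol(G,X)$ is regular by \cite[Prop.~4.1]{neushap}. Next, note that the defining condition for membership in $L$ is a universally quantified condition over the finitely many cyclic permutations $v$ of $w$ and over the (infinitely many) words $\alpha$ with $l(\alpha)\le k$; I want to replace it by its negation and show the negation defines a regular set, so that $L = \geocpl(G,X) \setminus (\text{that set})$ is regular.

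The key step is the following reformulation. A word $w\in\geocpl(G,X)$ fails the condition precisely when there is a cyclic permutation $v$ of $w$ and some $\alpha$ with $l(\alpha)\le k$ such that $|\alpha^{-1}v\alpha| < |v|$. Since $v\in\geocpl(G,X)$ already, $|v| = l(v) = l(w)$, so the condition $|\alpha^{-1}v\alpha| < |v|$ is really "$\alpha^{-1}v\alpha$ is represented by a word of length $< l(w)$". Here I would invoke Proposition~\ref{prop:FFTPngeol}: for the constant $K:=1$ (or more simply, working directly with the FFTP constant), the set $\ngeol_0 = \geol(G,X)$ and its complement are regular, and more to the point I can detect "$|u| < l(v)$" using a standard-automaton argument. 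Concretely, following the proof of Theorem~\ref{thm:hypgroups}, for each fixed $\alpha$ with $l(\alpha)\le k$ the language
\[
L_1(\alpha) := \{(u,v) \mid u,v\in X^*,\ u =_G \alpha^{-1}v\alpha\}
\]
is regular, being accepted by the standard automaton $M_\epsilon$ of \cite[Definition~2.3.3]{echlpt} based on the ball of radius $l(\alpha)$ (cf.\ \cite[Lemma~8.1]{gs}); intersecting with the regular language $\{(u,v) \mid l(u) < l(v)\}$ of padded pairs and projecting onto the $v$-coordinate gives a regular language
\[
B(\alpha) := \{v\in X^* \mid \exists u,\ u=_G\alpha^{-1}v\alpha,\ |u| \le l(u) < l(v)\}.
\]
Intersecting instead with $\{(u,v)\mid |u|<l(v)\}$ is cleaner: replace $u$ by any geodesic $\hat u$ for $\alpha^{-1}v\alpha$, so $B(\alpha) = \{v \mid |\alpha^{-1}v\alpha| < l(v)\}$ is regular. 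Then
\[
X^*\setminus L \;=\; \geocpl(G,X) \cap \cyc\!\Big(\bigcup_{l(\alpha)\le k} B(\alpha)\Big),
\]
and since there are only finitely many $\alpha$ with $l(\alpha)\le k$, this is a finite union inside a cyclic closure of regular sets, hence regular by Lemma~\ref{lem:cyclicclosure}; thus $L$ is regular.

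The main obstacle, and the place that needs care, is handling the cyclic permutations correctly: the quantifier "$v\in\cyc(w)$" must be pushed through the regular-language machinery. The clean way is to first build the regular set $D(\alpha) := B(\alpha)$ of all $v\in\geocpl(G,X)$ witnessing a failure directly at $v$, take the finite union over $\alpha$, and then observe that $w$ fails the condition iff \emph{some} cyclic permutation of $w$ lies in $\bigcup_\alpha D(\alpha)$, i.e.\ iff $w\in\cyc\big(\bigcup_\alpha D(\alpha)\big)$ — here using that being in $\geocpl$ is closed under cyclic permutation and that $l(v)=l(w)$ for all such $v$, so the length comparisons are unaffected. Applying Lemma~\ref{lem:cyclicclosure} once more and complementing within the regular language $\geocpl(G,X)$ finishes the argument. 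A secondary point to verify is that the standard automaton $M_\epsilon$ genuinely recognizes conjugation by a \emph{fixed} word $\alpha$ (as opposed to a fixed group element), but this is exactly \cite[Lemma~8.1]{gs} / the construction recalled in the proof of Proposition~\ref{prop:FFTPngeol}, so no new work is required.
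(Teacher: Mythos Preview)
Your overall architecture is exactly the paper's: take the complement of $L$ inside $\geocpl$, write it as $\cyc\big(\bigcup_{l(\alpha)\le k} L(\alpha)\big)$ with $L(\alpha)=\{v\in\geocpl:|\alpha^{-1}v\alpha|<|v|\}$, and then use Lemma~\ref{lem:cyclicclosure} and closure under finite union. The handling of cyclic permutations is fine.

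The gap is in your proof that each $B(\alpha)=L(\alpha)$ is regular. You assert that
\[
L_1(\alpha)=\{(u,v)\mid u,v\in X^*,\ u=_G\alpha^{-1}v\alpha\}
\]
is regular, citing the standard automaton of \cite[Def.~2.3.3]{echlpt} and \cite[Lemma~8.1]{gs}. But \cite[Lemma~8.1]{gs} is a statement about \emph{biautomatic} structures: it needs that any two geodesics related by $u=_G\alpha^{-1}v\alpha$ synchronously fellow-travel with a constant depending only on $l(\alpha)$. The FFTP does not give you this; it only tells you that a \emph{nongeodesic} fellow-travels a shorter word for the \emph{same} element. Likewise, the standard automaton used in the proof of Proposition~\ref{prop:FFTPngeol} accepts pairs $(w,w')$ that are both equal in $G$ \emph{and} fellow-travel; it was applicable there precisely because iterating FFTP produced a fellow-traveling witness. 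You have not produced such a witness for the conjugated pair $(u,v)$, so the automaton need not capture all of $L_1(\alpha)$.

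The paper avoids this two-tape comparison entirely. It observes that for geodesic $v$,
\[
|\alpha^{-1}v\alpha|<|v|\ \Longleftrightarrow\ l(\alpha^{-1}v\alpha)-|\alpha^{-1}v\alpha|>2l(\alpha)\ \Longleftrightarrow\ \alpha^{-1}v\alpha\notin\ngeol_{2l(\alpha)},
\]
so $L(\alpha)=\geocpl\cap\{v:\alpha^{-1}v\alpha\in X^*\setminus\ngeol_{2l(\alpha)}\}$. Since $\ngeol_{2l(\alpha)}$ is regular by Proposition~\ref{prop:FFTPngeol}, and left/right quotient by the fixed strings $\alpha^{-1},\alpha$ preserves regularity, $L(\alpha)$ is regular. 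This is the missing idea: reduce the question about the pair $(u,v)$ to membership of the \emph{single} word $\alpha^{-1}v\alpha$ in a near-geodesic language, which is exactly what Proposition~\ref{prop:FFTPngeol} was set up to detect under FFTP.
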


\begin{proof}
For a word $w \in \geocpl$, we have
$w\not\in L$ if and only if
$w \in \cyc( \cup_{\alpha \in  X^*,l(\alpha)\leq k} L(\alpha))$,
where \[ L(\alpha) = \{ v \in \geocpl:  |\alpha^{-1}v\alpha|<|v|\}.\]
Since any word $v$ satisfies
\[|\alpha^{-1}v\alpha|< |v| \iff
l(\alpha^{-1}v\alpha) - |\alpha^{-1}v\alpha| > 2l(\alpha),\]
we have
\[ L(\alpha)= \{ v \in \geocpl: \alpha^{-1}v\alpha \in \ngeol_{2l(\alpha)}\}. \] 
Since $G$ satisfies the FFTP, the languages $\ngeol_{2l(\alpha)}$ and
$\geol = \ngeol_0$ are regular by 
Proposition~\ref{prop:FFTPngeol},
and hence (by Proposition~\ref{prop:cyclgeoreggeo})
so is $\geocpl$.
Standard properties of regular languages now ensure the regularity
of $L(\alpha)$.

Since the class of regular languages is closed under finite union, and
by cyclic closure (Lemma~\ref{lem:cyclicclosure}),
the complement of $L$ in $\geocpl$ given by
$ \cyc( \cup_{\alpha \in X^*,l(\alpha)\leq k} L(\alpha))$ is also regular.
It follows that $L$ is a regular set.
\end{proof}

The following is immediate.
\begin{corollary}\label{cor:fftp}
Suppose that $G=\langle X \rangle$ satisfies the FFTP.
Suppose that, for some fixed $k,s$, and for all $r \geq s$,
the set  $\geocl(G,X) \cap X^r$ is equal to the set of all
$w \in \geocpl(G,X) \cap X^r$ for which
\[ (v \in \cyc(w)\wedge \alpha \in X^* \wedge l(\alpha)\leq k)
                    \Rightarrow |\alpha^{-1}v\alpha| \geq |v|.\]
Then $\geocl(G,X)$ is regular.
\end{corollary}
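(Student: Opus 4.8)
The plan is to split $\geocl(G,X)$ into a finite part and an infinite part, and to apply the preceding two propositions to the infinite part. First I would observe that, by hypothesis, for every $r \geq s$ the set $\geocl(G,X) \cap X^r$ coincides with $L \cap X^r$, where $L$ is exactly the language appearing in Proposition~\ref{prop:FFTPgeocpl} (for the same constant $k$): namely the set of words $w \in \geocpl(G,X)$ satisfying the implication $(v \in \cyc(w) \wedge \alpha \in X^* \wedge l(\alpha) \leq k) \Rightarrow |\alpha^{-1}v\alpha| \geq |v|$. Hence the two languages $\geocl(G,X)$ and $L$ can differ only on words of length strictly less than $s$, i.e.
\[
  \geocl(G,X) \;=\; \bigl(L \cap \textstyle\bigcup_{r \geq s} X^r\bigr) \;\cup\; \bigl(\geocl(G,X) \cap \textstyle\bigcup_{r < s} X^r\bigr).
\]

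Next I would note that each of the two sets on the right is regular. The first is $L \cap (X^{\geq s})$; since $X^{\geq s} = X^* \setminus (\bigcup_{r<s} X^r)$ is cofinite and therefore regular, and $L$ is regular by Proposition~\ref{prop:FFTPgeocpl} (whose hypotheses are met because $G$ satisfies the FFTP), the intersection is regular. The second set is finite, being contained in $\bigcup_{r<s} X^r$, and finite languages are regular. Since the class of regular languages is closed under finite union, it follows that $\geocl(G,X)$ is regular, as claimed.

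The content of the corollary is therefore essentially bookkeeping: it isolates the combinatorial condition under which the ``generic'' description of conjugacy geodesics (provided by bounded-length conjugating elements and cyclic permutations, as captured by $L$) is actually correct in all sufficiently long lengths, leaving only finitely many exceptional short words. The only thing to be careful about is matching the constant $k$ in the statement with the constant $k$ fed into Proposition~\ref{prop:FFTPgeocpl}, and checking that $L$ is a subset of $\geocpl(G,X)$ so that ``$L \cap X^r = \geocl(G,X) \cap X^r$ for $r \geq s$'' is exactly the hypothesis supplied; there is no real obstacle here, and no further estimate is needed beyond invoking Propositions~\ref{prop:FFTPngeol} and~\ref{prop:FFTPgeocpl}.
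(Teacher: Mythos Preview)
Your argument is correct and is exactly the reasoning the paper intends: the paper simply declares the corollary ``immediate'' after Proposition~\ref{prop:FFTPgeocpl}, and your write-up spells out precisely that deduction (regular $L$ from Proposition~\ref{prop:FFTPgeocpl}, intersected with the cofinite set $X^{\ge s}$, together with the finite remainder of short words). There is nothing to add.
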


Applying this corollary with
$s=8\delta+1$ and $k=2\delta+1$ gives an alternative proof of 
Theorem~\ref{thm:hypgroups}
that uses FFTP rather than biautomaticity in word hyperbolic groups. 

In the following section we use Proposition~\ref{prop:FFTPgeocpl}
(rather than the corollary)
to prove the regularity of $\geocl$ for an extra-large type Artin group.

%%%%%%%%%%%%%%%%%%%%%%%%%%%%%%%%%%%%%%%%%%%%%%%%%%%%%%%%%%%%%%%%%%%
%%%     Artin
%%%%%%%%%%%%%%%%%%%%%%%%%%%%%%%%%%%%%%%%%%%%%%%%%%%%%%%%%%%%%%%%%%%
%%%%%%%%%%%%%%%%%%%%%%%%%%%%%%%%%%%%%%%%%%%%%%%%%%%%%%%%%%%%%%%%%%%

\subsection{Artin groups of (extra) large type}\label{sec:artin}

An Artin group is defined by the presentation
\[ \langle x_1,\ldots ,x_n \mid {}_{m_{ij}}(x_i,x_j)=
{}_{m_{ij}}(x_j,x_i)\quad\hbox{\rm for each}\quad i \neq j \rangle, \]
where $(m_{ij})$ is a {\em Coxeter matrix}
(a symmetric $n \times n$ matrix with entries in
$\N \cup \{\infty\}$, $m_{ii}=1, m_{ij} \geq 2$, $\forall
i \neq j$),
and where
for generators $a,a'$ and $m \in \N$ we define ${}_m(a,a')$ to be the
word that is the product of $m$ alternating $a$s and $a'$s that
starts with $a$.
The set $\{x_1,\ldots.x_n\}$ is usually called the standard generating set
of the group, but since we want our generating sets to be inverse-closed,
we define $X := \{x_1,\ldots.x_n\}^{\pm 1}$ to be the {\em standard generating
set}.
An Artin group has {\em large type} if all the integers $m_{ij}$ are
at least 3, and {\em extra-large type} if they are all at least 4.

Holt and Rees~\cite[Thms.~3.2,\,4.1]{HRgeo} have shown that the 
languages $\geol(G,X)$ and $\sphl(G,X)$ are regular.
Theorems~\ref{thm:Artin_geocl} and
~\ref{thm:Artin_sphcl} below
investigate $\geocl(G,X)$, $\mincl(G,X)$, and $\sphcl(G,X)$
for Artin groups of (extra-)large type.

First we note that it follows from a result of Mairesse and
Math\'eus~\cite[Prop.~4.3]{MM} that
the set of geodesics for a dihedral Artin group
(i.e. an Artin group presented above with $n=2$)
over its standard generating set is locally testable, and
so we obtain the following corollary of Theorem~\ref{thm:lt}.

\begin{corollary}
\label{cor:dihedral_Artin_geocl}
For a dihedral Artin group $G$ over its standard
generating set $X$, the set $\geocl(G,X)$ is regular.
\end{corollary}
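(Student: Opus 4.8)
The plan is to combine two facts that are already essentially in place. First, Theorem~\ref{thm:lt} tells us that whenever $\geol(G,X)$ is locally testable, the language $\geocl(G,X)$ differs from $\geocpl(G,X)$ by only a finite set, and in particular is regular. So it suffices to establish that the set of geodesics for a dihedral Artin group over its standard generating set is locally testable; everything else is machinery we may cite. I would therefore state as the single substantive step: invoke the description of geodesics in dihedral Artin groups given by Mairesse and Math\'eus~\cite[Prop.~4.3]{MM}.

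\smallskip

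Concretely, I would first recall that a dihedral Artin group is an Artin group on two standard generators $x_1, x_2$ with a single relation ${}_{m}(x_1,x_2) = {}_{m}(x_2,x_1)$ for some $m \ge 2$, and that $X = \{x_1, x_2\}^{\pm 1}$. The result of~\cite[Prop.~4.3]{MM} gives a normal-form / rewriting description of geodesic words over $X$; the key point to extract is that whether a word $w \in X^*$ is geodesic can be decided by inspecting its factors of bounded length — that is, there is a constant $k$ (depending on $m$) and a finite set of ``forbidden'' factors of length $k$ together with conditions on the prefix and suffix of length $k-1$, such that $w$ is geodesic if and only if none of the forbidden factors occurs and the boundary conditions hold. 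This is exactly the assertion that $\geol(G,X)$ is a union of $\sim_k$-classes, i.e. that it is $k$-locally testable in the sense of the definition recalled before Theorem~\ref{thm:lt}.

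\smallskip

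Having verified local testability of $\geol(G,X)$, the corollary is then immediate: by Theorem~\ref{thm:lt} there is a finite set $A$ with $\geocl(G,X) = \geocpl(G,X) \setminus A$, and since $\geol(G,X)$ is regular (being locally testable), Proposition~\ref{prop:cyclgeoreggeo} gives that $\geocpl(G,X)$ is regular; removing a finite set preserves regularity, so $\geocl(G,X)$ is regular.

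\smallskip

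I expect the main obstacle to be purely expository: one must read off from the statement of~\cite[Prop.~4.3]{MM} — which is phrased in the language of normal forms or of a confluent rewriting system rather than of local testability — the fact that geodesity is detectable from bounded-length factors. Depending on how explicitly that source lists the geodesic words, this may require a short argument (for instance, observing that the rewriting rules which shorten a non-geodesic word all have left-hand sides of length bounded in terms of $m$, so that a word is geodesic precisely when it avoids these bounded patterns). No delicate estimate is needed beyond that; everything downstream is a citation of results already proved in this paper.
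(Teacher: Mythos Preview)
Your proposal is correct and follows essentially the same approach as the paper: the paper also deduces the corollary directly from Theorem~\ref{thm:lt} by citing \cite[Prop.~4.3]{MM} for the local testability of $\geol(G,X)$. In fact the paper later spells out the Mairesse--Math\'eus criterion explicitly (a word is geodesic iff $p(w)+n(w)\le m$, where $p(w)$ and $n(w)$ measure the longest positive and negative alternating subwords), which makes the local testability transparent; your additional unpacking via Proposition~\ref{prop:cyclgeoreggeo} is unnecessary since Theorem~\ref{thm:lt} already concludes regularity of $\geocl$.
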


\begin{theorem}
\label{thm:Artin_geocl}
Let $G=\langle X \rangle$ be an Artin group of extra-large type with $X$ its
standard generating set. Then $\geocl(G,X)$ is regular, and
for any ordering of the generators $\mincl(G,X)$ is also regular.
\end{theorem}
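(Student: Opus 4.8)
The plan is to apply Corollary~\ref{cor:fftp} (equivalently, Proposition~\ref{prop:FFTPgeocpl}), so the work splits into two parts: first, verify that an extra-large type Artin group over its standard generating set $X$ satisfies the FFTP; second, establish the ``bounded conjugator'' property that for all sufficiently long words, being a \geocon\ is detected by checking conjugators of bounded length against all cyclic permutations. For the first part I would appeal to the geodesic-recognition machinery of Holt and Rees~\cite{HRgeo}: their analysis of geodesics in (extra-)large type Artin groups via ``rightward'' and related reduction moves shows that any nongeodesic word admits a shorter word that fellow-travels with it (indeed, their normal form / geodesic automaton arguments are essentially of falsification-by-fellow-traveler type), so the FFTP holds for some explicit constant $k_0$. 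This lets me invoke Proposition~\ref{prop:FFTPngeol} to get regularity of all the languages $\ngeol_K$, and Proposition~\ref{prop:cyclgeoreggeo} for regularity of $\geocpl(G,X)$.

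The heart of the argument, and the step I expect to be the main obstacle, is the second part: showing there is a constant $k$ such that for all sufficiently long $w$, one has $w \in \geocl(G,X)$ if and only if $w \in \geocpl(G,X)$ and no cyclic permutation $v$ of $w$ can be shortened by conjugation by a word of length at most $k$. The ``only if'' direction is immediate; the content is the ``if'' direction, i.e.\ that a long cyclic geodesic which is \emph{not} a conjugacy geodesic can already be shortened using a \emph{short} conjugator applied to \emph{some cyclic permutation}. To prove this I would use structural results on conjugacy in Artin groups of (extra-)large type. One route is via the action on a relevant complex or via the Appel--Schupp / Holt--Rees solution to the conjugacy problem in these groups: if $\alpha^{-1}v\alpha$ is shorter than $v$ with $\alpha$ long, one argues that cancellation forces $\alpha$ to interact with $v$ through a bounded ``overlap'', so that after replacing $v$ by a suitable cyclic permutation one can peel off the part of $\alpha$ that does not contribute, reducing to a conjugator of bounded length. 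The bound on $k$ will come out of the constants governing the local structure of geodesics (piece lengths, dihedral subword lengths bounded in terms of the $m_{ij}$) together with a fellow-traveler/thin-bigon type estimate. The finitely many short words (those of length below the threshold $s$) that happen to be conjugacy geodesics or not are handled by hand, as in Theorem~\ref{thm:hypgroups}.

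Once these two facts are in place, Corollary~\ref{cor:fftp} (with this $k$ and this $s$) immediately yields that $\geocl(G,X)$ is regular. For the statement about $\mincl(G,X)$, recall that $\mincl(G,X) = \geocl(G,X) \cap \sphl(G,X)$; since $\sphl(G,X)$ is regular by~\cite[Thm.~4.1]{HRgeo} for any ordering of the generators, and regular languages are closed under intersection, $\mincl(G,X)$ is regular for every ordering of $X$. I would remark that, alternatively, Proposition~\ref{prop:FFTPgeocpl} can be applied directly (choosing the constant $k$ as above) to obtain the regularity of the language $L$ appearing there, and then $\geocl(G,X)$ differs from $L$ only by a finite set together with the cyclic-closure bookkeeping already packaged into the proof of Proposition~\ref{prop:FFTPgeocpl}. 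The delicate point throughout remains the uniform bound on conjugator length witnessing non-minimality, which is where the extra-large (as opposed to merely large) hypothesis, giving better control on geodesic pieces, is used.
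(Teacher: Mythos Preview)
Your overall framework—FFTP plus a bounded-conjugator characterisation, fed into Proposition~\ref{prop:FFTPgeocpl} or Corollary~\ref{cor:fftp}—is correct in spirit and is indeed the engine the paper uses, but the paper does not attempt a single uniform bounded-conjugator statement for all long cyclic geodesics. Instead it partitions $\geocl \setminus \{1\}$ as $L_1 \cup L_2 \cup L_3$ according to whether a word involves one, two, or at least three of the standard generators, and treats these pieces by different methods. Only for $L_3$ is Proposition~\ref{prop:FFTPgeocpl} invoked, and there the bounded-conjugator input is not an overlap or peeling argument but a structural theorem: Appel--Schupp's \cite[Thm.~$4''$]{AppelSchupp} says that two specially cyclically reduced words involving at least three generators are conjugate only if suitable cyclic permutations are conjugate by a \emph{power of a single generator}, and results from \cite{HRconj} (Props.~4.2, 5.1, 6.2) then show that such a conjugation preserves length unless one side is not ``cyclically reduced'' in their sense, and that such a word cannot be conjugate to a shorter word on fewer generators. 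This gives $k=1$ for $L_3$.

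The gap in your sketch is the two-generator case. Words in $L_2$ live in dihedral parabolic subgroups $G(i,j)$, where the Appel--Schupp mechanism above does not apply. The paper shows, using \cite[Prop.~5.1]{HRconj} (this is where the extra-large hypothesis is genuinely used), that a two-generator word is a conjugacy geodesic in $G$ if and only if it is one in $G(i,j)$, and then handles the dihedral case via Corollary~\ref{cor:dihedral_Artin_geocl} (locally testable geodesics, Theorem~\ref{thm:lt}). Your ``cancellation forces a bounded overlap'' heuristic does not substitute for either of these ingredients: one still needs the reduction to parabolics and a separate argument inside the dihedral groups. So while a uniform bounded-conjugator statement may well be true, establishing it amounts to carrying out this case analysis anyway; as written, your step~2 is where the real content lies and it is not yet an argument. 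Your treatment of $\mincl$ is fine; note only that the paper cites \cite[Thm.~3.2]{HRgeo} rather than Thm.~4.1 for regularity of $\sphl$.
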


\begin{proof}
%Since $\sphl(G,X)$ is regular~\cite{HRgeo}, it suffices to show
%the result for $\geocl$.
Let $n:= |X|/2$.
For standard generators $x_i,x_j$, we write $X(i,j) = \{ x_i,x_j\}^{\pm 1}$
and $G(i,j) = \langle X(i,j) \rangle$.
When $n=2$, the Artin group is dihedral, and so the result follows from
Corollary~\ref{cor:dihedral_Artin_geocl}.

When $n \geq 3$, $\geocl \setminus \{1\}$ can be written as a union
$L_1 \cup L_2 \cup L_3$
of words involving the generators $x_i^{\pm 1}$ for one, two or 
at least three
values of $i$, respectively.
%generator, two generators and three or more generators
%(including possibly negative powers) respectively.
By \cite[Prop. 4.1]{HRconj}, $L_1$ is exactly the set of
freely reduced
powers of generators, 
and hence is regular.
%\comment{SH: Small edits here from email discussion.}
We aim now to show that $\geocl(G(i,j),X(i,j)) \subseteq \geocl(G,X)$.
So suppose that $w$ is a 2-generator word involving generators $x_i,x_j$,
and that $w \not\in \geocl(G,X)$.
If $w$ is non-geodesic in $G$, then, by results of~\cite[Section~3]{HRgeo},  
$w \not \in \geocl(G(i,j),X(i,j))$, so from on now we assume that $w$ is
geodesic in $G$.
Suppose that some generator $g$ conjugates $w$ to a word with a shorter
representative. The results proved in \cite{HRgeo} show that none of the
reductions used to reduce a words to \wslex\ normal form could involve
$g$ if $g \not\in X(i,j)$, so $g \in X(i,j)$, and 
again $w \not \in \geocl(G(i,j),X(i,j))$.
Otherwise, the element $\pi(w)$ is `cyclically reduced' according
to the definition of \cite{HRconj}; that is, for each
$a \in X$ we have $|a^{-1}\pi(w)a| \ge |\pi(w)|$.
Then we can apply
\cite[Prop. 5.1]{HRconj}, which ensures the existence of words
$\alpha,u$ over $X(i,j)$ with
$\pi(\alpha^{-1}w\alpha)=\pi(u)$ and $l(u)<l(w)$.
So in this case too $w \not\in \geocl(G(i,j),X(i,j))$.

Since $\geocl(G(i,j),X(i,j)) \supseteq (L_1 \cup L_2) \cap X(i,j)^*$, 
it now follows that
\[ L_1 \cup L_2 = \cup_{i\neq j} \geocl(G(i,j),X(i,j)). \]
Consequently $L_1 \cup L_2$ is regular. 

We shall now apply Proposition~\ref{prop:FFTPgeocpl} to show that $L_3$
is regular.
We note that since $\geocl(G,X) \subseteq \geocpl(G,X)$, we have
$L_3 \subseteq \geocpl(G,X)$.  By~\cite[Thm.~4.1]{HRgeo},
the language $\geol(G,X)$ is regular, and so  
Proposition~\ref{prop:cyclgeoreggeo} shows that $\geocpl(G,X)$
also is regular.  Standard properties of regular languages now show
that the set $\tilde L_3$ of all words
in $\geocpl$ involving at least three generators is also regular. 

Words in $\geocpl$ are `specially cyclically reduced'
according to the criteria of~\cite[Thm. $4''$]{AppelSchupp}, 
and so that theorem
applies to show that two words in $\tilde L_3$ represent conjugate
elements if and only if cyclic conjugates of them are conjugate via
a power of a generator. Then \cite[Prop. 6.2]{HRconj}
applies to show that those two cyclic conjugates must have
the same length unless one of them represents an element of
$G$ that is not `cyclically reduced'
according to the definition above; that is,
its conjugate by some generator represents a shorter element of $G$.
%\comment{DFH: added sentence at end of paragraph}
Also, by \cite[Props. 4.2,\,5.1]{HRconj}, a word in $\tilde L_3$
cannot be conjugate to a shorter element of $G$ that involves fewer than
three generators.

Hence, for $w \in \tilde L_3$,
$w$ lies in $\geocl(G,X)$ precisely when none of its cyclic
conjugates is shortened by conjugation by a generator; that is
precisely when
\[ (v \in \cyc(w)\wedge a \in X ) \Rightarrow |a^{-1}va| \geq |v|.\]
In~\cite[Thm.~4.1]{HRgeo} it is also shown that the group $G$ satisfies the
%\comment{DFH: Clarified application of Propn 3.7}
FFTP with respect to the generating set $X$.
Now Proposition \ref{prop:FFTPgeocpl} applies to show that the set of all words
$v$ that lie in $\geocpl$ and satisfy the above condition is regular.
But we have just shown that $L_3$ is the intersection of this set with the
regular set $\tilde L_3$, so $L_3$ is regular,
and the proof that $\geocl(G,X)$ is regular is complete.

Holt and Rees~\cite[Thm.~3.2]{HRgeo} show that $\sphl(G,X)$
is regular for any ordering of the generators,
and therefore the intersection
$\mincl(G,X)=\geocl(G,X) \cap \sphl(G,X)$ is also regular.
\end{proof}

The remainder of this section is devoted to the proof of
Theorem~\ref{thm:Artin_sphcl}.
We need a technical result about a subset of $\geocl$ in the case where $G$ is 
dihedral.

\begin{lemma}
\label{DA_technical}
Let $G=\langle a,b \mid {}_m(a,b) = {}_m(b,a) \rangle$
be a dihedral Artin group with $m \ge 3$, and let $\Delta = {}_m(a,b)$.
Let $w$ be a positive word in $a^2$ and $b^2$.
Suppose that $w'$ is any word in the generators and inverses that represents a
conjugate of $\pi(w)$ in $G$.  Then $|w'| \ge |w|$ and, if $|w'|=|w|$, then $w'$ is
a cyclic conjugate of $w$ or of $w^\Delta$.

\end{lemma}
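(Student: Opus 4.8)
The plan is to establish the two halves of the statement by quite different means: the length bound $|w'|\ge|w|$ and the forced positivity of $w'$ in the equality case are consequences of an abelianization-type argument, while the identification of $w'$ with a cyclic permutation of $w$ or of $w^\Delta$ rests on the Garside structure of $G$ and the conjugacy theory of dihedral Artin groups.

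For the first half I would use the homomorphism $\zeta\colon G\to\Z$ with $\zeta(a)=\zeta(b)=1$, which is well defined since both ${}_m(a,b)$ and ${}_m(b,a)$ map to $m$. As $w$ is a positive word, $\zeta(\pi(w))=l(w)$; since $\Z$ is abelian, any $w'$ representing a conjugate of $\pi(w)$ has $\zeta(\pi(w'))=l(w)$ as well. Each letter of $w'$ changes $\zeta$ by $\pm1$, so $l(w')\ge|\zeta(\pi(w'))|=l(w)$, and if equality holds then every letter of $w'$ must be $a$ or $b$, i.e.\ $w'$ is a positive word. In particular $l(w)=|\pi(w)|\subconj$, so $w$ is a \geocon --- which is why the lemma concerns a subset of $\geocl$ --- and the same inequality, applied to $w$, to $w^\Delta$ and to their cyclic permutations (all of which are positive words), shows that these are all geodesic.

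It then remains to prove: if $w'$ is a positive word with $l(w')=l(w)$ and $\pi(w')$ conjugate to $\pi(w)$, then $w'\in\cyc(\{w\})\cup\cyc(\{w^\Delta\})$. Here I would exploit the fact that $\Delta$ is the Garside element of the spherical-type Artin group $G$, so that $G$ is a Garside group and conjugacy can be analysed through cycling, decycling and super summit sets, supplemented by the conjugacy results of~\cite{HRconj} and~\cite{AppelSchupp} in the two-generator case. Two structural features of $w$ drive the argument: first, $w$ is a concatenation of the blocks $a^2$ and $b^2$, so every maximal run of a single generator in $w$ has even length and $w$ therefore contains no alternating subword of length $m$ (as $m\ge3$); second, conjugation by $\Delta$ is the identity on $\{a,b\}$ when $m$ is even, so $w^\Delta=w$, and is the transposition $a\leftrightarrow b$ when $m$ is odd, so $w^\Delta$ is $w$ with $a$ and $b$ interchanged. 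With these in hand one shows that the positive words of minimal length $l(w)$ in the conjugacy class of $\pi(w)$ are exactly those obtainable from $w$ by cyclically permuting its letters, possibly after a conjugation by $\Delta$.

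The main obstacle is this last step: ruling out ``exotic'' conjugates of the same length. Since the only defining relation of $G$ turns ${}_m(a,b)$ into ${}_m(b,a)$, one has to verify that no sequence of such replacements --- applied to $w$ or to a cyclic permutation of it and combined with further cyclic rearrangements --- can yield a positive word of length $l(w)$ outside $\cyc(\{w\})\cup\cyc(\{w^\Delta\})$. Keeping this under control requires following the left-greedy normal form of $\pi(w)$ in the $I_2(m)$ Garside monoid under cycling and decycling, and treating the parity of $m$ separately, because conjugation by $\Delta$ behaves quite differently in the two cases. One should also note the harmless fact that the two cyclic orbits may coincide (for instance whenever $m$ is even, or whenever $w$ is itself a cyclic permutation of $w^\Delta$), which is consistent with how the conclusion is phrased.
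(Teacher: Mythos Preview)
Your abelianization argument for $l(w')\ge l(w)$ and for the positivity of $w'$ in the equality case is correct and cleaner than what the paper does for that part; the paper does not isolate this step but folds it into a single minimal-counterexample argument using the Mairesse--Math\'eus geodesic criterion $p(\cdot)+n(\cdot)\le m$.

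The second half, however, has a genuine gap. You need to show that every positive word $w'$ of length $l(w)$ representing a conjugate of $\pi(w)$ lies in $\cyc(\{w\})\cup\cyc(\{w^\Delta\})$, and for this you invoke Garside cycling, super summit sets, and \cite{HRconj}, \cite{AppelSchupp}. The last two references concern Artin groups on at least three generators and are used in this paper precisely to reduce conjugacy questions \emph{to} dihedral parabolic subgroups; they say nothing about conjugacy \emph{within} a dihedral Artin group. As for the Garside approach, cycling and decycling act on simple factors of the left-greedy normal form, not on individual letters, so the link between the super summit set and letterwise cyclic permutations of $w$ is not automatic. You would have to prove both that every positive conjugate of length $l(w)$ has $\inf=0$ and $\sup=\sup(\pi(w))$ (so lies in the super summit set), and that the super summit set, read back as positive words over $\{a,b\}$, consists only of cyclic permutations of $w$ and $w^\Delta$. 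Neither step is supplied, and neither is routine. Your description of the obstacle as sequences of relation-replacements combined with cyclic rearrangements also understates the difficulty: the conjugating element need not be positive, so monoid rewriting alone does not cover all cases.

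The paper avoids all of this by a direct argument on words: take a counterexample $(w,w',g)$ with a geodesic $v$ for $g$ of minimal length, use the Mairesse--Math\'eus criterion to force $p(v),n(v)\le m/2$, and then by explicit case analysis (on the parity of $m$ and on how the longest positive alternating subword of $v^{-1}wv$ overlaps $w$ and $v$) replace $w$ by a cyclic permutation of $w$ or of $w^\Delta$ while strictly shortening $v$, contradicting minimality.
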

In order to make the proof of the lemma easier to follow, we precede it with a
short explanation of the structure of geodesics in dihedral Artin groups;
a characterisation is given in \cite{MM}.

Following \cite{HRgeo}, we extend the notation for alternating products of
generators already
introduced above, and define, for any letters (not just generators) $a,b$,
${}_m(b,a)$ and $(b,a)_m$ to be words of length consisting of
alternating $a$s and $b$s that begin and end in $a$, respectively.
So (for example)  ${}_m(a,b)^{-1} = (b^{-1},a^{-1})_m$.

Now, for any word $w$ 
in the generators (and inverses)  of the dihedral Artin group
$G$ presented as in the statement of the lemma,
we define $p(w)$ to be the minimum of $m$ and the length of the longest
subword of $w$ of alternating $a$'s and $b$'s.  Similarly, we define $n(w)$ to
be the minimum of $m$ and the length of the longest subword of $w$ of
alternating $a^{-1}$'s and $b^{-1}$'s.
According to \cite[Prop.~4.3]{MM}, $w$ is geodesic if and
only if $p(w)+n(w) \le m$.
Furthermore, $\pi(w)$ has more than one geodesic representative if and only
if $p(w)+n(w)=m$.

We recall also that the element $\Delta$ represented by 
${}_m(a,b)$ and ${}_m(b,a)$ is central
in $G$ when $m$ is even, whereas $a^\Delta=b$, $b^\Delta=a$ and $\Delta^2$
is central when $m$ is odd.
For a word $w = a_1\cdots a_k$ with $a_i \in \{a,b\}^{\pm}$, we define
$w^\Delta$ to be the word $a_1^\Delta\cdots a_k^\Delta$.

\begin{proofof}{Lemma~\ref{DA_technical}}

We prove the result using a minimal counterexample argument,
and for the induction to work we need to work under the more general
hypothesis that $w$ is a cyclic conjugate of a positive word in $a^2$
and $b^2$. So either $w$ is itself
such a word, or else it has the form $x\hat{w}x$, where $\hat{w}$ is a
positive word in $a^2$ and $b^2$, and $x \in \{a,b\}$.
For such a word $w$, $p(w) \leq 2$, and $n(w)=0$. So, by \cite{MM},
as explained above, $w$ is geodesic.

Suppose that $g^{-1}wg =_G w'$, and let $v$ be a shortest word representing $g$.
Choose $w,w'$ and $g$ to be a counterexample to the theorem in which $|v|$
is minimal.

Then $p(v) \le m/2$, since otherwise there would be a counterexample with
shorter $v$ in which a positive alternating subword $u$ of $v$ of length $p(v)$
is replaced by the geodesic representative
of $\Delta^{-1}u$, the prefix $v_1$ of $v$ before
the occurrence of $u$ is replaced by $v_1^\Delta$,
and $w'$ is replaced by $w'^\Delta$. Similarly
$n(v) \le m/2$. We assume that $v$ has a positive prefix, that is,
that the first letter of $v$ is $a$ or $b$.
(Otherwise the last letter of $v^{-1}$ is $a$ or $b$, and the argument is
similar.) In fact, we may assume without loss that it is $a$.

If there is any free cancellation in the word $v^{-1}wv$, then we can replace
$w$ by a cyclic conjugate and $v$ by a shorter word so, by the minimality
of $v$, the word $v^{-1}wv$ must be freely reduced, and it cannot be a geodesic word.
Since any negative alternating subwords of $v^{-1}wv$ must occur within
$v$ or $v^{-1}$, we have $n(v^{-1}wv) \le m/2$. Since $v^{-1}wv$ is
not geodesic, we see (by considering the cases when $m$ is even and odd)
that $p(v^{-1}wv) > (m+1)/2$. Hence, since $p(v) \le m/2$ and
$p(w) \le 2 \le (m+1)/2$, the longest positive alternating subword of
$v^{-1}wv$ must overlap the subwords $w$ and $v$ (and hence, since $v$
begins with $a$, $w$ must end in $b$).

Suppose first that $m$ is odd. 
In this case, for any $0<p<m$ the equation
${}_p(a,b)=_G \Delta_{m-p}(a^{-1},b^{-1})$ holds, and we use it below.
Now, since $p(v) \le (m-1)/2$ and
$p(v^{-1}wv) > (m+1)/2$, $w$ must end in $a^2b$ and $v$ must have a prefix
$u := {}_{(m-1)/2}(a,b)$. Let $w = w_1ab$, where $w_1$ ends in $a$.  Then
\begin{eqnarray*}
 u^{-1}wu  &=& (b^{-1},a^{-1})_{(m-1)/2}\, w_1\,{}_{(m+3)/2}(a,b)\\&=_G&
 (b,a)_{(m+1)/2}\, w_1^\Delta\, {}_{(m-3)/2}(a^{-1},b^{-1})\\&=&
  (b,a)_{(m-3)/2}\, (abw_1)^\Delta\, {}_{(m-3)/2}(a^{-1},b^{-1}).
\end{eqnarray*}
So, by replacing $w$ by the cyclic conjugate $(abw_1)^\Delta$ of $w^\Delta$,
and the prefix $u$ of $v$ by the shorter word
${}_{(m-3)/2}(a^{-1},b^{-1})$, we find a counterexample with a shorter $v$.

So now suppose that $m$ is even.
In this case, for any $0<p<m$ the equation
${}_p(a,b)=_G \Delta_{m-p}(b^{-1},a^{-1})$ holds, and $\Delta$ is central.
Suppose first that $w$ does not have $ab$ as suffix. Then $v$ must have a
prefix $u := {}_{m/2}(a,b)$. Let $w = w_2b$.  Then 
\begin{eqnarray*}
 u^{-1}wu  &=& (b^{-1},a^{-1})_{m/2}\, w_2\,{}_{m/2+1}(b,a)\\&=_G&
 (a,b)_{m/2}\, w_2^\Delta\, {}_{m/2-1}(a^{-1},b^{-1})\\&=_G&
 (a,b)_{m/2}\, w_2\, {}_{m/2-1}(a^{-1},b^{-1})\\&=&
  (b,a)_{m/2-1}\, bw_2\, {}_{m/2-1}(a^{-1},b^{-1}).
\end{eqnarray*}
So, by replacing $w$ by its cyclic conjugate $bw_2$, and the prefix $u$ of $v$
by the shorter word ${}_{m/2-1}(a^{-1},b^{-1})$, we find a counterexample
with a shorter $v$.

So now suppose that $w$ does have $ab$ as suffix, and let $w = w_3ab$, where $w_3$
ends in $a$.  If $v$ has prefix $u := {}_{m/2}(a,b)$ then
\begin{eqnarray*}
 u^{-1}wu  &=& (b^{-1},a^{-1})_{m/2}\, w_3\,{}_{m/2+2}(a,b)\\&=_G&
 (a,b)_{m/2}\, w_3^\Delta\, {}_{m/2-2}(b^{-1},a^{-1})\\&=&
 (a,b)_{m/2}\, w_3\, {}_{m/2-2}(b^{-1},a^{-1})\\&=&
  (a,b)_{m/2-2}\, abw_3\, {}_{m/2-2}(a^{-1},b^{-1}).
\end{eqnarray*}
So, by replacing $w$ by its cyclic conjugate $abw_3$, and the prefix $u$ of $v$
by the shorter word ${}_{m/2-2}(a^{-1},b^{-1})$, we find a counterexample
with a shorter $v$.

Finally, if $v$ does not have prefix ${}_{m/2}(a,b)$, then it must have
prefix ${}_{m/2-1}(a,b)$, and either $v$ or $v^{-1}$ must contain a
negative alternating subword of length $m/2$. In that case, we can
replace $g^{-1}wg$ by an equivalent word of length $|g^{-1}wg| -2$, in
which a negative alternating subword of $v$ or $v^{-1}$ is replaced by
a positive alternating subword of the same length, and the subword
${}_{m/2+1}(a,b)$ of $g^{-1}wg$ that overlaps $w$ and $g$ is replaced by
${}_{m/2-1}(b^{-1},a^{-1})$. The resulting word has no positive or negative
alternating subwords of length greater than $m/2$, and hence it is
geodesic. Since $|v|>1$, the resulting word is longer than $w$, and 
so $w$, $g$, and $v$ do not give
a counterexample to the theorem, which is a contradiction of the
original choice of these words.
\end{proofof}

\begin{theorem}
\label{thm:Artin_sphcl}
Let $G$ be an Artin group of large type, and $X = \{x_1,\ldots,x_n\}$
its standard generating set. 
Then $\sphcl(G,X)$ is not regular.
\end{theorem}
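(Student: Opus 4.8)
The plan is to exhibit an infinite family of conjugacy classes whose shortlex conjugacy normal forms cannot be recognized by a finite state automaton, and to do this by a pumping-type argument. Since $G$ has large type there is a pair of standard generators $x_i, x_j$ with $m_{ij} = m \ge 3$; write $a = x_i$, $b = x_j$, and let $H = G(i,j)$ be the dihedral Artin subgroup they generate, with $\Delta = {}_m(a,b)$. The family I would use consists of the elements $g_{p,q} = a^{2p} b^{2q}$ for $p, q \ge 1$: these are positive words in $a^2$ and $b^2$, so Lemma~\ref{DA_technical} applies and tells us that any word representing a conjugate of $g_{p,q}$ in $G$ has length at least $2p + 2q$, with equality only for cyclic conjugates of $a^{2p}b^{2q}$ or of $(a^{2p}b^{2q})^\Delta$. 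In particular $|g_{p,q}|_c = 2p+2q$, and the shortlex conjugacy normal form $z_{[g_{p,q}]}$ is the shortlex-least word among these finitely many cyclic conjugates — with the ordering chosen so that $a^{2p}b^{2q}$ itself is typically that least word (or, if not, an explicitly identifiable cyclic conjugate of it).

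First I would pin down exactly which word is the shortlex conjugacy normal form of each $[g_{p,q}]$; this is a finite check among the cyclic conjugates of $a^{2p}b^{2q}$ and of $(a^{2p}b^{2q})^\Delta$, and the answer will be a word of the shape $a^{2p}b^{2q}$ (assuming $a$ precedes $b$ and the relevant inequality on normal forms goes the expected way — one adjusts $a \leftrightarrow b$ otherwise). The key point is that the set $\{z_{[g_{p,q}]} : p,q \ge 1\}$ is, up to the finite ambiguity just resolved, of the form $\{a^{2p}b^{2q} : p,q \ge 1\}$. Crucially I must also verify that \emph{no other} conjugacy class has $a^{2p}b^{2q}$ (or the chosen variant) as its shortlex conjugacy normal form — but that is automatic, since a shortlex conjugacy normal form determines the conjugacy class of the element it represents.

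Then I would invoke the Myhill–Nerode theorem / pumping lemma: if $\sphcl(G,X)$ were regular, then since $\{a^{2p}b^{2q} : p,q\ge 1\}$ is (modulo the finite set of exceptional cyclic conjugates, which does not affect regularity) the intersection of $\sphcl(G,X)$ with the regular language $a^+b^+$, that intersection would be regular; but $\{a^{2p}b^{2q}\}$ is not regular — the classes $a^{2p}b^{2q}$ for distinct $p$ are pairwise Myhill–Nerode inequivalent, since $a^{2p}b^{2q} \in \sphcl$ forces $q$ to equal $p$-independent data only through the constraint that the exponents be even, yet distinguishing $a^{2p}$ from $a^{2p'}$ requires infinitely many right-congruence classes. (Concretely: $a^{2p} \sim_L a^{2p'}$ would require $a^{2p}b^{2q} \in L \iff a^{2p'}b^{2q}\in L$ for all $q$, which holds, so I instead distinguish using a pumping argument on the $a$-block: pumping $a^{2p}b^{2q}$ within its $a$-run produces $a^{2p+t}b^{2q}$, and for odd $t$ this is not a conjugacy geodesic normal form at all, since $|a^{2p+t}b^{2q}|_c < 2p+t+2q$ when the exponent $2p+t$ is odd — one checks $a$ odd-power blocks can be shortened up to conjugacy via Lemma~\ref{DA_technical}-type reasoning, or more simply observe the word is not even geodesic-mod-conjugacy by parity.) Choosing the family so that membership in $\sphcl$ pins the exponents to a non-eventually-periodic set of residues is the crux.

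The main obstacle I anticipate is the bookkeeping in the last step: making the pumping argument genuinely work requires knowing that $a^{2p}b^{2q}$ lies in $\sphcl(G,X)$ \emph{but} $a^{2p+1}b^{2q}$ does not, i.e.\ that inserting a single $a$ destroys the shortlex-conjugacy-normal-form property. For this I would rely on Lemma~\ref{DA_technical} again, but applied to the element $a^{2p+1}b^{2q} = a \cdot (a^{2p}b^{2q})$ viewed after cyclic conjugation, together with the explicit shortening identities inside $H$ (of the form ${}_p(a,b) =_G \Delta \cdots$) that show an element with a long enough single-generator power block on each side can be conjugated to something strictly shorter, or at least to something whose shortlex conjugacy normal form is not the given word. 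The cleanest route is probably to choose the witness family more carefully — e.g.\ $g_p = a^2 b^2 a^2 b^2 \cdots$ with $p$ blocks, so that $\sphcl \cap (a^2b^2)^+ a^* $ or a similar regular pattern isolates exactly $\{(a^2b^2)^p\}$ — and then the non-regularity of $\{(a^2b^2)^p : p \ge 1\}$ inside the regular set $(a^2b^2)^*$ is immediate by Myhill–Nerode, while Lemma~\ref{DA_technical} guarantees these are genuinely the normal forms. I would write the proof with whichever family minimizes the case analysis needed to certify "this word is the shortlex conjugacy normal form and its obvious pumped variants are not."
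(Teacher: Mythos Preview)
Your overall strategy---use Lemma~\ref{DA_technical} to identify the shortlex conjugacy normal forms of a family of elements, then intersect $\sphcl(G,X)$ with a regular language and show the result is not regular---is exactly right, and it is what the paper does. But your candidate families do not work, and this is a genuine gap, not just bookkeeping.

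For $g_{p,q}=a^{2p}b^{2q}$, Lemma~\ref{DA_technical} does show that $a^{2p}b^{2q}$ is the shortlex conjugacy normal form of its class (assuming $a<b$), so $a^{2p}b^{2q}\in\sphcl(G,X)$ for \emph{all} $p,q\ge 1$. But then $\sphcl(G,X)\cap (aa)^+(bb)^+=(aa)^+(bb)^+$ is regular, proving nothing. You notice this and try to repair it by pumping to $a^{2p+1}b^{2q}$ and claiming that odd exponents force the word out of $\sphcl$; however there is no such parity obstruction---$a^{2p+1}b^{2q}$ is geodesic (it has $p(w)\le 2$, $n(w)=0$), and there is no reason it should fail to be a shortlex conjugacy normal form. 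Your alternative family $(a^2b^2)^p$ has the same defect: $\sphcl(G,X)\cap(a^2b^2)^*=(a^2b^2)^*$ is regular.

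The missing idea is to choose a two-parameter family in which the \emph{shortlex comparison among cyclic conjugates} introduces an inequality between the parameters. The paper uses $w_{m,n}=a^{2m}b^2a^{2n}b^2$: by Lemma~\ref{DA_technical} the minimal-length representatives of $[w_{m,n}]\subconj$ are exactly the cyclic conjugates of $w_{m,n}$ and of $w_{m,n}^\Delta$, and among these the shortlex least (with $a<b$) is $a^{2\max(m,n)}b^2a^{2\min(m,n)}b^2$. Hence $w_{m,n}\in\sphcl(G,X)$ if and only if $m\ge n$, so
\[
\sphcl(G,X)\cap (a^2)^*b^2(a^2)^*b^2=\{a^{2m}b^2a^{2n}b^2:m\ge n\},
\]
which is not regular. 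A second point you gloss over: when $|X|/2>2$, Lemma~\ref{DA_technical} only controls conjugacy within the dihedral subgroup $G(i,j)$, not in $G$; the paper invokes \cite[Prop.~5.1]{HRconj} to pass from $G$ to $G(i,j)$ for these particular words.
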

\begin{proof}
Suppose first that $n=2$, so $G=\langle a,b \mid {}_m(a,b) = {}_m(b,a) \rangle$
(with $a=x_1$, $b=x_2$) is a dihedral Artin group with $m \ge 3$.
Assume, without loss of generality, that $a<b$. 
We deduce from Lemma~\ref{DA_technical} that, for $m,n>2$,
the word $a^{2m}b^2a^{2n}b^2$ is in $\sphcl$ if and only if $m\geq n$.
Hence
$\sphcl(G,X) \cap (a^2)^*b^2(a^2)^*b^2 = \{ a^{2m}b^2a^{2n}b^2: m \geq n \}$,
which is not regular.
It follows from the regularity of $(a^2)^*b^2(a^2)^*b^2$
that $\sphcl(G,X)$ is not regular.

When $n > 2$, we let $a=x_i$ and $b=x_j$ be any two generators in $X$.
Now, as we remarked in the proof of Theorem~\ref{thm:Artin_geocl},
\cite[Prop. 5.1]{HRconj} implies that, if a group element
$a^{2m}b^2a^{2n}b^2$ with $m,n>2$ is conjugate in $G$ to a shorter word, then
it is conjugate to a shorter word in the subgroup
$G(i,j) = \langle a,b \rangle$ of $G$. (In fact the proof of
\cite[Prop. 5.1]{HRconj} is valid in general only for extra-large
type Artin groups, but it works for large type provided that the
element $g'$ defined in that proof has at least three syllables, which
is true for the 4-syllable word $a^{2m}b^2a^{2n}b^2$.) So again, assuming
$a<b$, we have 
$\sphcl(G,X) \cap (a^2)^*b^2(a^2)^*b^2 = \{ a^{2m}b^2a^{2n}b^2: m \geq n \}$,
which is not regular, and 
hence $\sphcl(G,X)$ is not regular.
\end{proof}

%%%%%%%%%%%%%%%%%%%%%%%%%%%%%%%%%%%%%%%%%%%%%%%%%%%%%%%%%%%%%%%%%%%
%%%     Garside
%%%%%%%%%%%%%%%%%%%%%%%%%%%%%%%%%%%%%%%%%%%%%%%%%%%%%%%%%%%%%%%%%%%
%%%%%%%%%%%%%%%%%%%%%%%%%%%%%%%%%%%%%%%%%%%%%%%%%%%%%%%%%%%%%%%%%%%

\subsection{Garside groups}\label{sec:garside}
{\em Garside groups} (also known as {\em small Gaussian groups})
were introduced in \cite{DehornoyParis99} as a generalization
of the spherical type Artin groups, which include the braid groups.
It is shown there that many of the
properties of  spherical type Artin groups, including having a geodesic
biautomatic structure, generalize to Garside groups. A Garside group $G$
is defined by a finite presentation in which all of the relations are of the
form $v=u$, where $v$ and $u$ are positive words in the group generators.  So
this presentation also defines the associated {\em Garside monoid} $G^+$, and
it turns out that this embeds into the Garside group.

There is an element $\Delta \in G^+$ known as the {\em Garside element}, and
the (typically rather large) set $S$ of divisors of $\Delta$ in $G^+$ forms the
set of {\em Garside generators} of $G$. The geodesic biautomatic structure
mentioned above is defined on these generators, and we shall show now that this
is also a \wslex\ automatic structure.

\begin{proposition}\label{prop:garsideslaut}
Let $G$ be a Garside group with Garside element $\Delta$ and Garside generators
$S$. Then there is an ordering of $X := S \cup S^{-1}$ with respect to which
$G$ is \wslex\ automatic. 
\end{proposition}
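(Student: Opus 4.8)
The plan is to leverage the known geodesic biautomatic structure on the Garside generators and upgrade it to a shortlex automatic structure by an appropriate choice of ordering, using the standard fact that an automatic group is shortlex automatic with respect to \emph{any} ordering of a generating set on which it already has \emph{some} automatic structure. So the first step is to recall from \cite{DehornoyParis99} that $G$ is biautomatic with respect to the generating set $X = S \cup S^{-1}$, via the language of normal forms coming from the greedy (left-)normal form on the Garside monoid; in particular this language $L$ is regular, consists of geodesic words, and has the fellow-traveller property.

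Next, fix any total ordering $\leq$ of $X$ (there is no subtlety here — any ordering works). I would then invoke \cite[Thm.~2.5.1]{echlpt}: given an automatic structure with language $L$ over $X$, the shortlex language $\sphl(G,X)$ with respect to any ordering of $X$ is also the language of an automatic structure for $G$. The mechanism is that one can construct, from the word-acceptor and multiplier automata for $L$, a new word-acceptor recognising exactly the shortlex-least representatives: one uses the multiplier automata to compare two words representing the same element, intersects with a "length comparison" and "lexicographic comparison" automaton, and then takes the set of words not strictly shortlex-beaten by any equivalent word — all operations that preserve regularity. Since the Garside normal forms are already geodesic, the shortlex representatives have the same length as the Garside normal forms, so finiteness of the "fellow travelling constant" is inherited and the multiplier automata for the shortlex structure exist.

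The only genuine content beyond citing \cite{echlpt} is making sure the Garside biautomatic structure from \cite{DehornoyParis99} really is given over $X = S \cup S^{-1}$ with $L$ regular and geodesic — which is exactly what is asserted there — and then observing that the passage from an arbitrary automatic structure to the shortlex one does not require the original structure to be biautomatic or geodesic; plain automaticity suffices for \cite[Thm.~2.5.1]{echlpt}. I do not expect any real obstacle: the proof is essentially a one-line deduction, "$G$ is automatic over $X$, hence by \cite[Thm.~2.5.1]{echlpt} it is shortlex automatic over $X$ with respect to any ordering of $X$." If anything needs care, it is only the remark that the ordering of $X$ is unconstrained (in contrast to the virtually abelian and large-type Artin cases elsewhere in the paper, where a specific ordering is forced), which is worth stating explicitly since later results (e.g.\ about $\mincl$ for homogeneous Garside groups) will want to combine this with an ordering chosen for other reasons.
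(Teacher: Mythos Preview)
Your argument has a genuine gap: \cite[Thm.~2.5.1]{echlpt} does not say what you claim. That theorem refines an automatic structure $(X,L)$ to one with uniqueness by selecting, for each group element, the shortlex-least word \emph{among the words of $L$ representing it}. It does not produce $\sphl(G,X)$, the set of shortlex-least words among \emph{all} of $X^*$. The multiplier automata you invoke only let you compare pairs $(u,v)$ with $u,v \in L$; they give you no handle on whether a word outside $L$ beats a word of $L$ in the shortlex order. In the Garside case the normal-form language $L$ from \cite{DehornoyParis99} already has uniqueness, so applying Theorem~2.5.1 just returns $L$ unchanged --- you learn nothing about whether $L$ coincides with $\sphl(G,X)$.

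The paper's proof does the real work you are skipping: it specifies a particular total order on $X$ (inverses precede positives; left-divisibility and right-divisibility in $G^+$ force certain inequalities among generators) and then proves, using the structure of the left greedy normal form and results of \cite{DehornoyParis99}, that with this order the Garside normal form $u^{-1}v$ of each element is precisely its shortlex-least representative over all of $X^*$. This identifies $L$ with $\sphl(G,X)$, whence shortlex automaticity follows. Your claim that ``any ordering works'' is not established by your argument, and the proposition as stated only asserts the existence of \emph{some} ordering; the paper's specific choice is essential to the proof.
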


\begin{proof}
We first recall some standard notation and results on Garside groups.
For $a,b,c \in G^+$, with $c =_{G^+} ab$, we say that $a$ is a {\em left
divisor} and $b$ is a {\em right divisor} of $c$. It can be shown that
any two elements $a,b \in G^+$ have a unique `largest' common left divisor
$a \wedge b$, which is left-divisible by all of their common left divisors.
%\comment{SH:  Reworded this a bit.}
For $a \in G^+$, we can write 
$a=_{G^+} (a \wedge \Delta)a'$ for some
$a' \in G^+$, and the word 
$a_1a_2a_3\cdots a_k \in S^*$ where
the elements $\hat a_1:=a$ and
$\hat a_{i+1}:=\hat a'_i$ for $1 < i < k$ satisfy
$\hat a'_k = 1$ and
$a_{i}:= \hat a_i \wedge \Delta$ for $1 \le i \le k$,
is known as the {\em left greedy normal form} of $a$.

Let $L$ be the set of words over $X$ of the form $u^{-1}v$, where
$u, v \in S^*$, $u$ and $v$ are in left greedy normal form, and
$u \wedge v = 1$.  It is proved in \cite[Thm. 8.1]{DehornoyParis99}
that $L$ is the language of a geodesic biautomatic structure for $G$
with uniqueness. We shall now define an order of $X$ with respect to which the
words in $L$ are the \wslex\ least representatives of the group elements.

We choose any total ordering of $X$ with the following three properties.
\begin{mylist}
\item[(i)] $s^{-1} < t$ for all $s,t \in S$;
\item[(ii)] if $s,t \in S$ and $s$ is a left divisor of $t$, then $t<s$.
\item[(iii)] if $s,t \in S$ and $s$ is a right divisor of $t$, then
$s^{-1}<t^{-1}$.
\end{mylist}
Let $w$ be the \wslex\ least representative of the group element with normal
form $u^{-1}v \in L$. We claim that $w = u^{-1}v$. Since
$u^{-1}v$ is geodesic, we have $|w|=|u^{-1}v|$. Since the normal forms
of elements of $G^+$ lie in $S^+$, we see that the least $m \ge 0$ with
$\Delta^m w \in G^+$ is equal to $|u|$ and the least $n \ge 0$ such that
$\Delta^n w^{-1} \in G^+$ is equal to $|v|$. It follows that $w$ must
contain exactly $|u|$ generators from $S^{-1}$ and $|v|$ from $S$
because if, for example, it contained $m < |u|$ negative generators,
then we would have $\Delta^m w \in G^+$.
So, by property (i) of the ordering, we have $w = u'^{-1}v'$ with
$u',v' \in S^*$, $|u'|=|u|$, $|v'|=|v|$.
So $u' =_G u$ and $v' =_G v$  by \cite[Cor. 7.5]{DehornoyParis99}.

It follows immediately from the definition of the left greedy normal
form and from property (ii) of the ordering that $v$ is the \wslex\ 
least representative of its group element, and hence $v=v'$.
Let $u' = a_1'a_2' \cdots a_m'$ with $a_i' \in S$.
By \cite[Lemma 8.4]{DehornoyParis99}, if $u'$ is not in normal
form then $a_i'a_{i+1}'$ is not in normal form for some $i$.
So, if the normal form word for this element is $a_ia_{i+1}$, then
$a_i'$ is a left divisor of $a_i$, and hence $a_{i+1}$ is a right
divisor of $a_{i+1}'$. But then, by (iii), $a_{i+1}^{-1}a_i^{-1} <
a_{i+1}'^{-1}a_i'$, contradicting the \wslex\ minimality of $u'^{-1}$.
So $u'$ is in normal form and hence $u'=u$, and $w=u^{-1}v$ as claimed.
\end{proof}

Let $G$ be a Garside group with Garside generators $S$ and $\Delta \in S$.
Conjugation by $\Delta$ permutes $S$, and we define
$\tau:S \rightarrow S$ by
$\tau(x) = \Delta^{-1} x \Delta$ for $x \in S$.
The {\em left greedy normal form} of an element $g \in G$
(which is a different normal form from the one in the proof of
Proposition \ref{prop:garsideslaut}, agreeing with
that normal form only on elements of $G^+$) 
%\comment{SH: Added explanation inside the parenthesis}
has the form
$\Delta^p a_1a_2 \cdots a_k$, where $a_1 a_2 \cdots a_k$ is a
positive word in left greedy normal form with $a_i \ne \Delta$
(see, for example,~\cite[2.6]{BirmanKoLee} for details). So
$p$ is maximal such that $\Delta^p$ is a left divisor of $g$.
Define $\inf(g) := p$, and $\sup(g):=p+k$.

The {\em cycling} and {\em decycling} operations from $G$ to $G$ are defined by
\begin{eqnarray*}
\cycwd(g) &=& \Delta^p a_2 \cdots a_k \tau^{-p}(a_1),\\
\decycwd(g) &=& \Delta^p \tau^p(a_k)a_1a_2 \cdots a_{k-1}.
\end{eqnarray*}
Note that these operations are equivalent to conjugation by
$\tau^{-p}(a_1)$ and $a_k^{-1}$, respectively. The (de)cycled word is not
necessarily in normal form, and it has to be put into normal form before the
operation can be applied again.  Note that cycling and decycling do
not decrease $\inf(g)$, but they may increase it. Similarly they do not
increase $\sup(g)$, but they may decrease it. 

Let $\inf([g])$ and $\sup([g])$ denote respectively the largest value of
$\inf(h)$ and the smallest value of $\sup(h)$ for an element $h$ in the
conjugacy class of $g$.

In \cite[Thm. 1]{BirmanKoLee}, it is proved that, for the braid group
$B_n$, there is a fixed number $K$ (equal to $(n^2-n)/2-1$ or $n-2$, depending
on which set of Garside generators of $B_n$ is used) such that
\begin{enumerate}
\item if $\inf([g]) > \inf(g)$ then $\inf(\cycwd^k(g)) > \inf(g)$
for some $k \le K$;
\item if $\sup([g]) < \sup(g)$ then $\sup(\decycwd^k(g)) < \sup(g)$
for some $k \le K$.
\end{enumerate}

A Garside group is called {\em homogeneous} if $l(v)=l(u)$ for each of its
defining relations $u=v$. This implies that the positive words in the group
generators that represent an element $g \in G^+$ all have the same length.
The spherical type Artin groups are homogeneous Garside groups, but there are
examples of inhomogeneous Garside groups.
%In fact spherical type Artin groups have two distinct Garside structures;
%the standard structure is known to be homogeneous.

It is observed in \cite[Section 3.2]{Franco} that the proof of
\cite[Thm. 1]{BirmanKoLee}
works for all homogeneous Garside groups, but it relies heavily
on elements of $G^+$ having a well-defined length in $G^+$, so it does
not appear to extend to general Garside groups.
It is proved in \cite[Props.~3.7,\,3.10]{picantin} 
that \cite[Thm. 1]{BirmanKoLee} holds
for general Garside groups, but without the bound on $k$.
For the remainder of this section, we assume that $G$ is homogeneous.

As observed above, $G$ is automatic with a geodesic normal form on
$X := S \cup S^{-1}$.  It is pointed out in
\cite[proof of Corollary 3]{BirmanKoLee} that the
geodesic length $|g|$ of $g$ over $X$ is equal to
$\max(\sup(g),\sup(g)-\inf(g),-\inf(g))$;
in fact $|g|$ is equal to $\sup(g)$ when $g$ is positive, $-\inf(g)$
when $g$ is negative, and $\sup(g)-\inf(g)$ otherwise.  

So, if $g$ is conjugate in $G$ to a shorter element $h$, then at least
one of $\inf(h) > \inf(g)$ and $\sup(h) < \sup(g)$ is true.
In the first case, we are in case (1) above, 
and so for some $k \leq K$, we have 
$\inf(\cycwd^k(g)) > \inf(g)$. We can deduce that $|\cycwd^k(g)|<|g|$ except
possibly when $g$ is positive and $|g|=\sup(g)$. But in this case, we must
have $\sup(h) < |g|=\sup(g)$, and so we are also in case (2), and can deduce
that 
$|\decycwd^{k'}(g)|=\sup(\decycwd^{k'}(g)) < \sup(g)=|g|$
for some $k' \le K$.
In the second case, we prove the identical result analogously. 

Hence, since $\cycwd^k$ and $\decycwd^k$ correspond to conjugation by group
elements of length $k$, we have the following.

\begin{proposition}\label{prop:garsidebound}
Let $G$ be a homogeneous Garside group with Garside
generators $S$. If  $w \in (S \cup S^{-1})^*$, and there exists $h \in G$ with
$|h^{-1}wh| < |w|$, then there exists such an $h$ of length at most $K$.
\end{proposition}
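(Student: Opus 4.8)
The plan is to follow the cycling--decycling argument sketched above the statement, spelling out the two boundary cases where improving only one of $\inf$ and $\sup$ fails to shorten the element. Write $g := \pi(w) \in G$; we may assume $w$ is geodesic over $X = S \cup S^{-1}$ (otherwise $h = 1$ already works), so that $|w| = |g|$, and by hypothesis $g$ is conjugate to some $g'$ with $|g'| < |g|$. I will use three facts recalled above: the geodesic length satisfies $|g| = \max(\sup(g),\, \sup(g) - \inf(g),\, -\inf(g))$; cycling and decycling never decrease $\inf$ nor increase $\sup$; and a single application of $\cycwd$ or $\decycwd$ is conjugation by an element of $S$, respectively by the inverse of one, so $\cycwd^k(g)$ and $\decycwd^k(g)$ are conjugates of $g$ by group elements of length at most $k$ over $X$. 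The first step is to note that at least one of $\inf([g]) > \inf(g)$ and $\sup([g]) < \sup(g)$ holds: were both equalities, then every conjugate $h$ of $g$ would satisfy $\inf(h) \le \inf(g)$ and $\sup(h) \ge \sup(g)$, hence $\sup(h) \ge \sup(g)$, $-\inf(h) \ge -\inf(g)$, and $\sup(h) - \inf(h) \ge \sup(g) - \inf(g)$, so $|h| \ge |g|$ by the length formula, contradicting $|g'| < |g|$.

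Next I would treat the case $\inf([g]) > \inf(g)$. Since $G$ is homogeneous, the cycling half of the Birman--Ko--Lee property gives $k \le K$ with $\inf(\cycwd^k(g)) > \inf(g)$; set $h := \cycwd^k(g)$, a conjugate of $g$ by an element of length $\le k \le K$. As cycling does not increase $\sup$, $\sup(h) \le \sup(g)$, hence $-\inf(h) < -\inf(g)$ and $\sup(h) - \inf(h) < \sup(g) - \inf(g)$; with $\sup(h) \le \sup(g)$ the length formula gives $|h| < |g|$ unless $\sup(h) = \sup(g) = |g|$. In that leftover subcase $|g| = \sup(g)$, which forces $\inf(g) \ge 0$, and then $\sup([g]) \le \sup(g') \le |g'| < |g| = \sup(g)$, so $\sup([g]) < \sup(g)$ and the decycling half of the property applies: some $k' \le K$ gives $\sup(\decycwd^{k'}(g)) < \sup(g)$; since $\inf(g) \ge 0$ and decycling does not decrease $\inf$, the element $h := \decycwd^{k'}(g)$ has $\inf(h) \ge 0$, so $|h| = \sup(h) < \sup(g) = |g|$ by the length formula, with $h$ conjugate to $g$ by an element of length $\le K$. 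The case $\sup([g]) < \sup(g)$ is handled symmetrically, interchanging the roles of $\cycwd$ and $\decycwd$, of $\inf$ and $\sup$, and of positive and negative; the leftover subcase there is $g$ negative with $|g| = -\inf(g)$. In all cases we obtain $h$ conjugate to $\pi(w)$ with $|h| < |w|$ via a conjugator of length at most $K$, as required.

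The only genuine subtlety is the handling of the two leftover subcases: when a one-sided improvement does not shorten the element, one must verify that the hypothesis of the complementary Birman--Ko--Lee alternative (that $\sup([g]) < \sup(g)$, respectively $\inf([g]) > \inf(g)$) actually holds, which is exactly where the bound $|g'| < |g|$ on the given short conjugate is used. A minor accompanying point is the observation that $k$ successive applications of $\cycwd$ or $\decycwd$, with normal forms recomputed between steps, still amount to conjugation by a word of length at most $k$; this follows at once from the fact that one cycling is conjugation by a single element of $S$ (one decycling by the inverse of one), but it deserves an explicit mention.
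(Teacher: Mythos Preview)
Your proposal is correct and follows essentially the same approach as the paper's argument: reduce to the dichotomy $\inf([g]) > \inf(g)$ or $\sup([g]) < \sup(g)$, apply the Birman--Ko--Lee bound on cycling/decycling, and in the leftover ``positive'' (respectively ``negative'') subcase switch to the other operation. Your write-up is in fact slightly more careful than the paper's in two places---you verify explicitly that $\sup(g') \le |g'|$ to trigger the complementary alternative, and you note that $k$ iterated (de)cyclings amount to conjugation by an element of length at most $k$---but the underlying argument is the same.
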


It is proved in \cite{Holt10} that $G$ satisfies the FFTP over the Garside
generators, and so Propositions~\ref{prop:garsidebound}
and~\ref{prop:garsideslaut}
together with Corollary \ref{cor:fftp} implies the following.

\begin{theorem}\label{thm:braid}
If $G$ is a homogeneous Garside group with Garside generators $S$
and $X := S \cup S^{-1}$, then $\geocl(G,X)$ is regular, and
there is an ordering of $X$ for which $\mincl(G,X)$ is regular.
This holds, in particular, for spherical type Artin groups.
\end{theorem}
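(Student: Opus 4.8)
The plan is to assemble Theorem~\ref{thm:braid} from the ingredients that have already been put in place, so the proof is essentially a bookkeeping exercise verifying that the hypotheses of Corollary~\ref{cor:fftp} are met. First I would recall that by Proposition~\ref{prop:garsideslaut} the group $G$ is \wslex\ automatic with respect to a suitable ordering of $X = S \cup S^{-1}$; in particular $\sphl(G,X)$ is regular, and since $G$ is automatic it is biautomatic (the Garside normal form is a geodesic biautomatic structure), hence $\geol(G,X)$ is regular. By Proposition~\ref{prop:cyclgeoreggeo}, $\geocpl(G,X)$ is then regular as well.

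Next I would invoke the FFTP for homogeneous Garside groups over the Garside generators, which is the content of \cite{Holt10}, so that $G = \langle X \rangle$ satisfies the hypothesis ``$G$ satisfies the FFTP'' required by Corollary~\ref{cor:fftp}. The remaining task is to check the combinatorial condition in that corollary: I need to produce constants $k$ and $s$ such that for all $r \ge s$, the words of length $r$ in $\geocl(G,X)$ are exactly the words $w$ of length $r$ in $\geocpl(G,X)$ with the property that no cyclic conjugate of $w$ is shortened by conjugation by an element of length at most $k$. Here the key input is Proposition~\ref{prop:garsidebound}: if any word $w$ over $X$ is conjugate to a strictly shorter element of $G$, then it is conjugate to a shorter element by some $h$ with $l(h) \le K$, where $K$ is the explicit bound from \cite{BirmanKoLee}/\cite{Franco}. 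I would take $k = K$ and $s = 0$ (any value works).

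The verification then runs as follows. A word $w \in X^*$ lies in $\geocl(G,X)$ exactly when no element of $G$ conjugates $\pi(w)$ to something shorter, i.e.\ when $|\alpha^{-1}\pi(w)\alpha| \ge |\pi(w)|$ for all $\alpha \in G$; by Proposition~\ref{prop:garsidebound} this is equivalent to the same inequality restricted to $\alpha$ of length at most $K$. For the characterization needed by Corollary~\ref{cor:fftp} I must also move from $w$ itself to its cyclic conjugates: if $w \in \geocpl(G,X)$ and $w$ is conjugate to a shorter element, then some cyclic conjugate $v$ of $w$ has $|v| = |w| = l(w)$ (since $w$ and all its cyclic conjugates are geodesic), and $|\alpha^{-1}\pi(v)\alpha| < |\pi(v)|$ for some $\alpha$ of length $\le K$; conversely if some cyclic conjugate is shortened by conjugation then $w \notin \geocl(G,X)$ because $\geocl(G,X)$ is closed under cyclic permutation. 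Thus for $w \in \geocpl(G,X)$, membership in $\geocl(G,X)$ is equivalent to the stated condition with $k = K$, uniformly in the length, so the hypothesis of Corollary~\ref{cor:fftp} holds with $s = 0$, $k = K$, and $\geocl(G,X)$ is regular. Finally, $\mincl(G,X) = \geocl(G,X) \cap \sphl(G,X)$ is regular as an intersection of two regular languages, using the ordering supplied by Proposition~\ref{prop:garsideslaut}; and the statement specializes to spherical type Artin groups since those are homogeneous Garside groups.

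I do not anticipate a serious obstacle here, since all the hard analytic work is done upstream. The one point requiring a little care is the transition from ``conjugate to a shorter element by a short conjugator'' (Proposition~\ref{prop:garsidebound}, phrased for arbitrary words $w$) to the cyclic-conjugate formulation of Corollary~\ref{cor:fftp}: one must be careful that applying Proposition~\ref{prop:garsidebound} to a cyclic conjugate $v$ of $w$ is legitimate (it is, since that proposition is stated for all $w \in X^*$), and that the length bookkeeping $|v| = l(v) = l(w)$ genuinely holds because every cyclic conjugate of a word in $\geocpl$ is geodesic. Beyond that, the proof is a direct citation of Propositions~\ref{prop:garsideslaut} and~\ref{prop:garsidebound}, the FFTP of \cite{Holt10}, and Corollary~\ref{cor:fftp}.
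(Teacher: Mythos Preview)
Your proposal is correct and follows exactly the same route as the paper: invoke the FFTP for Garside groups from \cite{Holt10}, use Proposition~\ref{prop:garsidebound} to supply the uniform bound $k=K$ on conjugator length needed in Corollary~\ref{cor:fftp}, and then intersect with $\sphl$ (regular by Proposition~\ref{prop:garsideslaut}) to get $\mincl$. The paper compresses all of this into a single sentence, whereas you spell out the verification that the hypothesis of Corollary~\ref{cor:fftp} holds; your discussion of cyclic conjugates is slightly more elaborate than necessary (taking $v=w$ already suffices for the converse direction), but nothing is wrong.
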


%%%%%%%%%%%%%%%%%%%%%%%%%%%%%%%%%%%%%%%%%%%%%%%%%%%%%%%%%%%%%%%%%%%
%%%%%%%%%%%%%%%%%%%%%%%%%%%%%%%%%%%%%%%%%%%%%%%%%%%%%%%%%%%%%%%%%%%
%%%    Shortlex languages and spherical growth 
%%%%%%%%%%%%%%%%%%%%%%%%%%%%%%%%%%%%%%%%%%%%%%%%%%%%%%%%%%%%%%%%%%%
%%%%%%%%%%%%%%%%%%%%%%%%%%%%%%%%%%%%%%%%%%%%%%%%%%%%%%%%%%%%%%%%%%%

\section{\Slex\ languages and \wsphs}\label{sec:spherical}

In \cite{rivin} and \cite{rivin04}, Rivin proved that
the \wsphcs\ $\sphcs(G)$ is not rational for nonabelian free groups on
their free generating sets, and made the following conjecture, 
one direction of which we shall prove in this section.

\begin{conjecture}\cite[Conjecture 13.1]{rivin}\label{conj:rivin}
Let G be a word hyperbolic group. Then
%the \wsphcs\  
$\sphcs(G)$ is rational if and only if G is virtually cyclic.
\end{conjecture}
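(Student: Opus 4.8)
The statement is a biconditional whose two implications are of very different difficulty, so the plan is to treat them separately and to be explicit about which half I expect to complete. Throughout I use that $\sphcs(G,X)$ is by definition the strict growth series of the language $\sphcl(G,X)$, and that a regular language has a rational growth series; thus rationality questions for $\sphcs$ can be attacked through regularity of $\sphcl$.

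For the \emph{easy} direction (virtually cyclic $\Rightarrow$ $\sphcs$ rational) the plan is to show that $\sphcl(G,X)$ is regular for every finite generating set $X$; this is the content of Theorem~\ref{thm:virtcyclic}. A finite group yields a finite, hence regular, language, so I may assume $G$ is infinite virtually cyclic; by the structure theory such a $G$ has a finite normal subgroup $N$ with quotient either $\Z$ or the infinite dihedral group $D_\infty$, and in either case $G$ acts properly on a line. Since every virtually cyclic group is word hyperbolic, $\mincl(G,X)$ is already regular by Theorem~\ref{thm:hypgroups}, and $\sphcl\subseteq\mincl$. The remaining, and only genuinely group-specific, step is to carve $\sphcl$ out of $\mincl$: within a single conjugacy class the minimal-length representatives differ only by cyclic permutation together with the bounded ambiguity coming from $N$, so a finite-state machine reading a word of $\mincl$ can decide whether it is the shortlex-least representative of its conjugacy class. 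Regularity of $\sphcl$, and hence rationality of $\sphcs$, then follows; and since $\phi_{\sphcl}(n)$ depends only on $(G,X)$, the conclusion is independent of the chosen ordering of $X$.

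For the \emph{hard} direction (rational $\Rightarrow$ virtually cyclic) the plan is to prove the contrapositive, and this is the half I expect to be the main obstacle — indeed it is the part of Rivin's conjecture that remains open in general. If a word hyperbolic $G$ is not virtually cyclic then it is \emph{non-elementary}; this reduction is clean, since the elementary hyperbolic groups are exactly the finite and virtually cyclic ones. Non-elementary hyperbolic groups contain quasiconvex free subgroups of rank two and have exponential growth, and the guiding heuristic is that conjugacy classes of infinite-order elements correspond to oriented closed geodesics, so that the number $\phi_{\sphcl}(n)$ of conjugacy classes of minimal length $n$ should be of order $\lambda^n/n$ for the growth rate $\lambda>1$ — exactly as cyclically reduced words in a free group fall into cyclic-rotation classes of size proportional to $n$. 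A series carrying this $\lambda^n/n$ behaviour, together with the M\"obius and totient arithmetic that governs the counting of primitive periods, is transcendental; this is precisely Rivin's computation for free groups in \cite{rivin},\,\cite{rivin04}, extended by Ciobanu and Hermiller in \cite{ciobanuhermiller} to the free products $\Z*\Z$ and $\Z/m\Z*\Z/n\Z$ with $m,n>2$.

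The obstacle is to make this heuristic a theorem uniformly over all non-elementary hyperbolic groups and all finite generating sets. Rationality of $\sphcs$ would force the coefficients $\phi_{\sphcl}(n)$ to satisfy an eventual linear recurrence, so it would suffice to produce an asymptotic expansion of $\phi_{\sphcl}(n)$ precise enough — including its arithmetic fine structure — to contradict every such recurrence. The natural inputs are the correspondence between infinite-order conjugacy classes and closed geodesics and the available equidistribution and orbit-counting results in hyperbolic groups; but controlling the error terms and the primitivity bookkeeping sharply enough to force transcendence, for an arbitrary generating set, is exactly the step I do not expect to carry through. My proposal therefore settles the biconditional in the ``if'' direction and reduces the ``only if'' direction to this transcendence statement, which is established only in the free and free-product cases cited above.
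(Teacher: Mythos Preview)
First, note that the statement you are attempting is a \emph{conjecture} in the paper, not a theorem: the paper proves only the implication ``virtually cyclic $\Rightarrow$ $\sphcs$ rational'' (Theorem~\ref{thm:virtcyclic}), and explicitly leaves the converse open. Your proposal is honest about this, since you reduce the ``only if'' direction to a transcendence statement you do not claim to prove. So the only thing to compare against the paper is your sketch of the ``if'' direction.

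That sketch has a genuine gap. After invoking Theorem~\ref{thm:hypgroups} to get $\mincl$ regular, you assert that within a conjugacy class the minimal-length representatives ``differ only by cyclic permutation together with the bounded ambiguity coming from $N$'', and that therefore a finite-state machine can recognise the shortlex-least one. Neither step is justified. For an arbitrary generating set of a virtually cyclic group there is no reason the shortlex normal forms of conjugate elements should be related by cyclic permutation of words --- that is a free-group phenomenon tied to a very special generating set --- and the finite normal subgroup $N$ you invoke plays no visible role in the argument. Worse, even in the model case where conjugacy \emph{is} cyclic rotation, recognising the lex-least rotation is not a regular condition on a single input (the language of Lyndon words over a two-letter alphabet is not regular), so the proposed mechanism cannot work as stated.

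The paper's argument avoids cyclic permutations entirely. It takes $H \cong \Z$ normal of finite index and sets $C = C_G(H)$; then $|G:C| \le 2$, and $G \setminus C$ splits into finitely many conjugacy classes, so $\sphcl \cap \pi^{-1}(G\setminus C)$ is finite. For $c \in C$ the conjugacy class is exactly $\{t^{-1}ct : t \in T\}$ for a fixed transversal $T$ of $H$ in $G$. The key point is that, for each fixed $t$, the padded language $L_1(t) = \{(u,v) : u,v \in \geol,\ v =_G t^{-1}ut\}$ is regular (by hyperbolicity, as in the proof of Theorem~\ref{thm:hypgroups}); projecting out the $u$'s that admit some $v \slex u$ with $(u,v) \in L_1(t)$ and removing the finite union over $t \in T$ from $\geol \cap \pi^{-1}(C)$ yields $\sphcl \cap \pi^{-1}(C)$. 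In other words, the paper replaces your ``cyclic permutation plus bounded ambiguity'' heuristic with a finite family of two-tape regular comparisons indexed by a fixed transversal --- and it is precisely this step that your proposal is missing.
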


%In this section we prove one direction of Rivin's conjecture: 
\begin{theorem}\label{thm:virtcyclic}
Let $G$ be a virtually cyclic group.
Then for all generating sets of $G$ the set of shortlex conjugacy
normal forms $\sphcl$ is regular and 
hence the spherical conjugacy growth series $\sphcs$ is rational.
\end{theorem}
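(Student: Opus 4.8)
The plan is to reduce the problem to the two classes of virtually cyclic groups and to handle each by an explicit structural analysis of conjugacy classes. Recall that a group $G$ is virtually cyclic if and only if it is finite, or it has a finite normal subgroup $F$ with $G/F \iso \Z$ (the \emph{orientable} case), or it has a finite normal subgroup $F$ with $G/F \iso \Z/2\Z * \Z/2\Z = D_\infty$ (the \emph{non-orientable} case). The finite case is trivial: $\sphcl$ is a finite language, hence regular, and $\sphcs$ is a polynomial. So the work is in the two infinite cases, and since regularity of a language (and hence rationality of its growth series) is what we want for \emph{every} finite generating set, I would fix an arbitrary finite inverse-closed generating set $X$ and argue directly.

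First I would set up the common framework. Let $F \lhd G$ be finite with $G/F$ infinite virtually cyclic of the stated form, and let $\phi:G \to G/F$ be the quotient map. Pick a central (or near-central) infinite-order element; more precisely, using that $G$ has a finite-index \emph{infinite cyclic normal} subgroup $C = \langle t \rangle$ (this holds in both infinite cases — in the $D_\infty$ case one passes to the orientation-preserving index-$2$ subgroup, which is orientable virtually cyclic, and this subgroup is normal in $G$), I would decompose $G$ as a finite union of cosets $\{g_1 C, \dots, g_r C\}$. The key geometric input is that the word length $|t^n x|_X$, for fixed $x$ in a fixed finite set of coset representatives, is (eventually) an affine function of $n$ — indeed the Cayley graph of $G$ is quasi-isometric to a line, so $|g|_X$ differs from a fixed linear function of the ``$C$-coordinate'' of $g$ by a bounded amount. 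The main step is then to understand, for each conjugacy class $c$, the minimal value of $|h|_X$ over $h \in c$, and to show that the set of shortlex-least such $h$ forms a regular language. I would split conjugacy classes into two types: those contained in $F \cdot (\text{bounded part})$, of which there are only finitely many and which contribute a finite sublanguage; and the ``unbounded'' classes. For an unbounded class, conjugation by $t$ fixes the class and acts on representatives in a controlled affine way, and conjugation by the finitely many coset representatives $g_i$ permutes things within a bounded window; so each unbounded conjugacy class meets a fixed bounded-width neighbourhood of the line in a uniformly bounded set, and its shortlex-minimal representative is determined by finitely much local data. Collecting this, $\sphcl(G,X)$ is a finite union of sets each of which is either finite or of the form ``shortlex normal forms of $\{$elements in a fixed coset-with-linear-length-growth satisfying a fixed congruence/threshold condition$\}$,'' and each such set is regular by a direct automaton construction (or by appeal to the regularity of $\sphl$ plus the fact that the minimality-up-to-conjugacy condition becomes, on each coset, a condition on the $C$-coordinate modulo a fixed period together with a one-sided threshold, which is recognizable).

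Concretely, the key steps in order: (1) reduce to the orientable infinite case by passing to a finite-index infinite cyclic normal subgroup $C \lhd G$ and observing $G = \bigsqcup_{i=1}^r g_i C$; (2) prove the length formula: there are constants and a finite-index sublattice data so that $|g|_X$ restricted to each coset $g_i C$ is eventually affine in the $C$-coordinate, with the ``eventually'' controlled by a uniform constant $D$; (3) describe conjugacy classes: show conjugation moves the $C$-coordinate of an element only by a bounded amount (since $[G:C]<\infty$ and $C$ is normal, commutators land in a finite set), so each conjugacy class either lies in the ball of some fixed radius $R$, or is ``linear'' — meeting each coset in an arithmetic-progression-like set with a well-defined minimal-length representative; (4) deduce that $\sphcl(G,X)$ is a finite union of a finite set and finitely many regular sets, using the already-established regularity of $\sphl(G,X)$ for virtually abelian (hence virtually cyclic) groups together with the fact that ``$g$ has minimal length up to conjugacy'' is, coset by coset, a threshold/periodicity condition on the $C$-coordinate; (5) conclude rationality of $\sphcs$ from regularity of $\sphcl$, which is standard.

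The main obstacle I anticipate is step (3)–(4) in the \emph{non-orientable} case: the $D_\infty$ quotient introduces a ``reflection'' so that some elements are conjugate to their inverses (roughly, $t^n$ and $t^{-n}$ become conjugate), which collapses pairs of cosets and can make the minimal-length condition depend on $|n|$ rather than $n$, and one must check this folding is still recognizable by a finite automaton — it is, since $|n|$-thresholds and $|n| \bmod p$ conditions are regular, but bookkeeping the finitely many ``twisted'' conjugations $g_i$ (some of which invert $C$) against the finitely many untwisted ones, and verifying the shortlex-least representative is picked out correctly in each residue class, is where the care is needed. A secondary subtlety is that the length function on a coset is only \emph{eventually} affine; the finitely many ``small'' conjugacy classes near the origin must be peeled off by hand and shown to contribute only a finite (hence harmless) sublanguage, and one must make sure the thresholds $R$, $D$ are chosen uniformly so that outside them the clean affine/periodic description applies.
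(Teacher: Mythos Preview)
Your step~(3) contains a genuine error that undermines the rest of the argument. You claim that ``conjugation moves the $C$-coordinate of an element only by a bounded amount (since $[G:C]<\infty$ and $C$ is normal, commutators land in a finite set).'' This is false precisely in the non-orientable case you flag as delicate, but not for the reason you anticipate. If $g \in G$ inverts the generator $t$ of $C$ (so $g t g^{-1} = t^{-1}$), then $[g,t^m] = t^{-2m}$, which does \emph{not} lie in a finite set; and for an element $h = g t^n$ in such a coset, conjugation by $t^m$ gives $t^{-m} g t^n t^m = g t^{n+2m}$, so the $C$-coordinate of $h$ moves by $2m$ --- arbitrarily far. Thus the conjugacy class of any element outside the centralizer $C_G(C)$ is infinite; in fact each coset $gC$ with $g \notin C_G(C)$ is a union of at most two conjugacy classes. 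Your anticipated obstacle (that $t^n$ and $t^{-n}$ become conjugate) is a red herring: that still yields a finite class. The real issue is elements that \emph{themselves} invert $C$.

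The paper's proof exploits exactly this dichotomy, but in a much lighter way than your affine-length programme. Writing $H = \langle x \rangle \cong \Z$ normal in $G$ and $C = C_G(H)$ (so $|G:C| \le 2$), one observes: (a)~$G \setminus C$ consists of only \emph{finitely many} conjugacy classes, so contributes a finite sublanguage to $\sphcl$; (b)~for $c \in C$ the conjugacy class is $\{t^{-1}ct : t \in T\}$ for a fixed transversal $T$ of $H$ in $G$, so has uniformly bounded size, and $w \in \sphcl$ iff no $t^{-1}wt$ with $t \in T$ has a shortlex-smaller representative. Rather than analysing lengths by hand, the paper then invokes word hyperbolicity of $G$: the biautomatic structure makes each padded language $L_1(t) = \{(u,v) \in \geol^2 : v =_G t^{-1}ut\}$ regular, and standard closure properties finish the job. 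Your route via eventually-affine length functions and threshold/periodicity conditions could be repaired once you adopt the correct $C$ versus $G\setminus C$ split, but it is substantially more work than simply using hyperbolicity as a black box; and note also that your appeal in step~(4) to regularity of $\sphl$ ``for virtually abelian groups'' is only established in the paper for a \emph{particular} generating set --- for arbitrary $X$ you again need hyperbolicity.
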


\begin{proof}
We may assume that $G$ is infinite. Then
there exists $H \unlhd G$, $H = \gen{x} \cong \Z$, with $G/H$ is finite.
Let $C:=C_G(H)$ be the centralizer of $H$ in $G$.
%that is, $C:=C_G(H)$.
Then the conjugation action of $G$ on $H$ defines a map
$G \rightarrow {\rm Aut}(\Z)$ with kernel $C$ and so $|G : C| \leq 2$.
For $g \in G \setminus C$, we have $x^{-1}gx=gx^2$, and hence the coset $Hg$
is either a single conjugacy class in $\langle H,g \rangle$ or a union of two
such classes, containing $g$ and $gx$. So $G \setminus C$ consists of
finitely many conjugacy classes of $G$.
On the other hand, for $g \in C$, $|G:C_G(g)|$ is finite, so
%On the other hand the conjugacy class $[x^i]\subconj$ of an element of $H$
%is either a singleton set $\{x^i\}$ or a set $\{x^i,x^{-i}\}$ of
%two elements (depending upon whether or not $G=C$), and the centralizer
$C$ is a union of infinitely many finite classes. 

Since $\sphcl \cap (G \setminus C)$ is finite, it is regular,
and to prove regularity of  $\sphcl$ it is
enough to show that $\sphcl \cap C$ is regular.

Let $T$ be a transversal of $H$ in $G$. Then for each $c \in C$, the
conjugacy class of $c$ is $\{t^{-1}ct \mid t \in T\}$,
and hence any word $w$ with $\pi(w)=c$ is in $\sphcl$ if and only if
there does not exist $t \in T$  for which $t^{-1}wt$ has a
representative $v$ with $v<_{sl} w$.

Now $G$ is word hyperbolic, and we recall from the proof Theorem
\ref{thm:hypgroups} that the set
\[ L_1(t) := \{ (u,v): u, v \in \geol,\quad \pi(v)= \pi(t^{-1}ut)\}\]
is regular for any $t \in T$, as is the set $\geol$.
So $\sphcl \cap C$ is the intersection of $\pi^{-1}(C)$ with
\[ \geol \setminus \cup_{t \in T} (\{ u \in \geol:
    \exists v \in \geol \text{ such that } (u,v) \in L_1(t),\, v <_{sl} u\}).\]
Now standard arguments for regular sets show that this set regular, and
$|G:C|$ finite implies that $\pi^{-1}(C)$ is regular,
so $\sphcl \cap C$ is also regular.
\end{proof}

%%%%%%%%%%%%%%%%%%%%%%%%%%%%%%%%%%%%%%%%%%%%%%%%%%%%%%%%%%%%%%%%%%%
%%%%%%%%%%%%%%%%%%%%%%%%%%%%%%%%%%%%%%%%%%%%%%%%%%%%%%%%%%%%%%%%%%%
%%%     Behavior of growth languages
%%%%%%%%%%%%%%%%%%%%%%%%%%%%%%%%%%%%%%%%%%%%%%%%%%%%%%%%%%%%%%%%%%%
%%%%%%%%%%%%%%%%%%%%%%%%%%%%%%%%%%%%%%%%%%%%%%%%%%%%%%%%%%%%%%%%%%%
%
%\section{Behavior of conjugacy languages and series}\label{sec:behavior}
%
\section{Behavior of conjugacy languages for virtually abelian groups}\label{sec:cgvsg}

In this section we present two examples of virtually abelian groups 
and discuss their conjugacy languages. 
Our main goal is to demonstrate that
regularity of the conjugacy languages can
occur even when $\sphl$ and $\geol$, or others among
the conjugacy languages, are not regular, and
to show dependence of regularity 
of the three conjugacy languages on the 
generating sets of the groups.
However, we recall that we proved in 
Proposition~\ref{prop:virtabcgeo} 
that any virtually abelian group has some generating set with respect
to which both $\geol$ and $\geocl$ are regular
and an ordering on that generating set for which
both $\sphl$ and $\mincl$ are regular.

We begin by considering an index 2 extension $G$ of $\Z^2$, where
%in Propositions~\ref{prop:sphclnotreg} and~\ref{prop:geovsgeocon}.
$G$ is the semidirect product 
$\Z^2 \rtimes \Z/2\Z$, and the $\Z/2\Z$ action
swaps the generators of $\Z^2$; that is, 
$$
G=\langle a,b,t \mid t^2=1, ab=ba, a^t=b \rangle.
$$
Cannon~\cite[p.~268]{neushap} noted that 
the language of geodesics for $G$
can be either regular 
or non-regular, 
depending on the generating set.  In the following
two propositions we explore Cannon's generating sets
in the case of conjugacy languages.  

\begin{proposition}\label{prop:sphclnotreg}
Let
%$G=\langle a,b,t \mid ab=ba, t^2=1, tat=b\rangle \cong
%\Z^2 \rtimes \Z/2\Z$, 
$G \cong \Z^2 \rtimes \Z/2\Z$ be as defined above, 
with the generating set
$Z=\{a^{\pm 1},b^{\pm 1},t\}$.
Then $\sphcl(G,Z)$ is not regular with 
respect to any ordering on $Z$, but 
the \wsphcs\ is given by the rational function
\[
\sphcs = \frac{(1+z)(1+2z+3z^2-z^3-z^4)}{(1-z^2)^2}.
\]
Moreover $\geocl(G,Z)$ and $\mincl(G,Z)$ are regular.
\end{proposition}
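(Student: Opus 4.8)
The first step is to describe the conjugacy classes of $G$ explicitly. Every element of $G$ has the form $a^ib^j$ or $a^ib^jt$ with $(i,j)\in\Z^2$. Conjugation by $t$ swaps $a\leftrightarrow b$, so it sends $a^ib^j\mapsto a^jb^i$ and $a^ib^jt\mapsto a^jb^it$; conjugation by $a^kb^l$ fixes every $a^ib^j$ and sends $a^ib^jt\mapsto a^{i-(k-l)}b^{j+(k-l)}t$. Hence $[a^ib^j]\subconj=\{a^ib^j,a^jb^i\}$ and $[a^ib^jt]\subconj=\{a^pb^qt\mid p+q=i+j\}$. Next I would record the word metric over $Z$: any word over $Z$ can be brought, without increasing its length, to a $t$-free word times $t^0$ or $t^1$ (push the $t$'s together using $ta=bt$, $ta^{-1}=b^{-1}t$, etc., then use $t^2=1$), so $\langle a,b\rangle\cong\Z^2$ is isometrically embedded and $|a^ib^j|=|i|+|j|$, $|a^ib^jt|=|i|+|j|+1$. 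Combining this with the class description, $|a^ib^j|\subconj=|i|+|j|$ and $|a^ib^jt|\subconj=|i+j|+1$.

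For the non-regularity of $\sphcl(G,Z)$, fix any total order on $Z$; since $a\leftrightarrow b$ extends to an automorphism of $G$ preserving $Z$, we may assume $a<b$. For $p,q\ge 1$ the geodesic words representing the positive element $a^pb^q$ are exactly the shuffles of $p$ copies of $a$ and $q$ copies of $b$ (a geodesic for a positive element uses neither $t$ nor inverse letters), so the $\slex$-least word representing an element of $\{a^pb^q,a^qb^p\}$ is $\min_{\slex}(a^pb^q,a^qb^p)$, which equals $a^pb^q$ precisely when $p\ge q$. Thus $\sphcl(G,Z)\cap a^{+}b^{+}=\{a^pb^q\mid p\ge q\ge 1\}$, which is not regular (pumping lemma), and hence $\sphcl(G,Z)$ is not regular for this order, hence for any order.

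For the growth series I would count conjugacy classes by their length up to conjugacy $n$. Classes inside $\Z^2$ of length $n\ge 1$ biject with the orbits of the involution $(i,j)\mapsto(j,i)$ on the $4n$ lattice points with $|i|+|j|=n$: there are $2n$ such orbits when $n$ is odd and $2n+1$ when $n$ is even (the extra ones being the fixed points $i=j=\pm n/2$). Classes in the nontrivial coset of length $n$ biject with $\{s\in\Z\mid|s|=n-1\}$, giving $1$ class when $n=1$ and $2$ when $n\ge 2$. Together with $\phi_{\sphcl}(0)=1$ and $\phi_{\sphcl}(1)=3$, summing these contributions and performing the resulting (routine) generating-function calculation yields $\sphcs=\dfrac{(1+z)(1+2z+3z^2-z^3-z^4)}{(1-z^2)^2}$.

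For regularity of $\geocl$ and $\mincl$, I would use the homomorphism $\phi\colon G\to\Z$ with $\phi(a)=\phi(b)=1$ and $\phi(t)=0$. For $w\in Z^*$, the integer $\sigma(w):=\phi(\pi(w))$ is the number of $a,b$-letters of $w$ minus the number of $a^{-1},b^{-1}$-letters, so $l(w)\ge \#_t(w)+|\sigma(w)|$. When $\pi(w)\in\Z^2$ we have $|\pi(w)|\subconj=|\pi(w)|$, hence $\geocl(G,Z)\cap\{w:\#_t(w)\text{ even}\}=\geol(G,Z)\cap\{w:\#_t(w)\text{ even}\}$. When $\pi(w)$ lies in the coset, $|\pi(w)|\subconj=|\sigma(w)|+1$, so $w\in\geocl(G,Z)$ iff $\#_t(w)=1$ and $|\sigma(w)|$ equals the number of non-$t$ letters of $w$, i.e.\ iff $w$ lies in the regular language $\{a,b\}^{*}t\{a,b\}^{*}\cup\{a^{-1},b^{-1}\}^{*}t\{a^{-1},b^{-1}\}^{*}$. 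Since $\geol(G,Z)$ is regular by \cite{neushap}, these descriptions exhibit $\geocl(G,Z)$ as a Boolean combination of regular languages, hence regular; and $\mincl(G,Z)=\geocl(G,Z)\cap\sphl(G,Z)$ is then regular because $\sphl(G,Z)$ is regular (a routine finite case analysis on the order, as recorded in Table~\ref{table1}).

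I expect the fiddliest parts to be the exact class count in the growth computation (tracking the fixed points of $(i,j)\mapsto(j,i)$ and the special small values $n=0,1$) and the verification that geodesics of positive elements are shuffles, since that underpins the order-independence in the non-regularity argument; by contrast, the conjugacy-class description and the $\phi$-invariant argument for $\geocl$ are straightforward once set up, the only external input being regularity of $\geol(G,Z)$ and $\sphl(G,Z)$ for this generating set.
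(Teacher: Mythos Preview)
Your proof is correct and follows essentially the same route as the paper: the same description of conjugacy classes, the same non-regularity argument via intersecting $\sphcl$ with positive words in $a,b$ (the paper uses $\{a,b\}^*$ rather than $a^+b^+$, and handles arbitrary orderings by a direct case split rather than your automorphism reduction, but the content is identical), the same orbit count for $\sphcs$, and the same reduction $\mincl=\geocl\cap\sphl$. Your treatment of $\geocl$ is a slightly cleaner repackaging---using the observation that $|g|\subconj=|g|$ for $g\in\Z^2$ to get $\geocl=\geol$ on the even-$t$ part and the homomorphism $\phi$ to characterize the odd-$t$ part---whereas the paper simply writes down explicit regular expressions for $\geol$, $\geocl$, and $\mincl$; both arrive at the same description $\bigcup_{\eta}\{a^\eta,b^\eta\}^*t\{a^\eta,b^\eta\}^*$ for the coset contribution.
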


\begin{proof}
Note that, since $t=t^{-1}$, the generating set
$Z$ is inverse-closed.
With respect to any ordering on $Z$ with $a^{\pm 1}<b^{\pm 1}<t$,
the \wsphl\  is
\[
\sphl(G,Z) = \{a^ib^jt^\epsilon \mid 
i,j \in \Z, \epsilon \in \{0,1\}\}.
\]
Using this representation of the group elements, 
we compute the conjugacy classes as
\[
[a^ib^j]\subconj=\{a^ib^j,a^jb^i\},\quad\quad
[a^ib^jt]\subconj = 
\{a^{i+k}b^{j-k}t \mid k \in \mathbb{Z} \}.
\]

%Thus the \wsphcl\ for $G$ with respect to the ordering
%above on $Z$ is
Thus, using this ordering, the \wsphcl\ is the non-regular set
\begin{eqnarray*}
\sphcl(G,Z)&=&
\{a^i b^j \mid |i| > |j|, i,j \in \mathbb{Z} \} 
\cup 
\{a^ib^i \mid i \in \Z\}
\cup
\{a^ib^{-i} \mid i \in \N\}\\
&& \quad \cup \{a^i t \mid i \in \Z\}, \end{eqnarray*}
For any ordering of $Z$, the
intersection of $\sphcl(G,Z)$ with the regular set 
$\{a,b\}^*$ is either
$\{a^i b^j \mid i>j \ge 0 \}$ or $\{b^i a^j \mid i>j \ge 0 \}$,
%which are not regular,
and so $\sphcl(G,Z)$ is never regular.
However,
%the \wsphcs\ is rational; in particular,
the above expression for $\sphcl$ gives
\[\sphcs = 1+3z+\sum_{k=1}^\infty (4k+3)z^{2k} 
          + \sum_{k=1}^\infty (4k+4)z^{2k+1},\]
yielding the required rational function for $\sphcs$.

Finally, applying the structure of the conjugacy classes 
again together with the \wgeol\ 
$
\geol(G,Z) = \bigcup_{\zeta,\eta \in \{\pm 1\}} 
\{a^\eta,b^{\zeta}\}^* \cup 
\{a^\eta,b^{\zeta}\}^* t  \{a^\zeta,b^{\eta}\}^* 
$
yields the conjugacy geodesic and shortlex minimal conjugacy languages 
(where the latter is computed using the ordering at the
start of this proof) as 
\begin{align*}
\geocl(G,Z) &=
\bigcup_{\eta,\zeta \in \{\pm 1\}} \{a^\eta,b^\zeta\}^*
\cup 
 \bigcup_{\eta \in \{\pm 1\}} \{a^\eta,b^{\eta}\}^* t\{a^\eta, b^{\eta}\}^*  \\
\mincl(G,Z) &= 
\bigcup_{\eta,\zeta \in \{\pm 1\}} (a^\eta)^*(b^\zeta)^*
\cup
 \bigcup_{\eta \in \{\pm 1\}} (a^\eta)^*(b^\eta)^* t.
\end{align*}
\end{proof}

The other generating set for $G=\Z^2  \rtimes \Z/2\Z$
considered by Cannon in~\cite{neushap} is
$
\{a,c,d,t\}^{\pm 1}
$
where $c=a^2$ and $d=ab$,
with presentation
$$
G=\langle a,c,d,t \mid ad=da, c=a^2, t^2=1,  
  tat = a^{-1}d \rangle.
$$
In the proof below we use these
generators, including a few more details for Cannon's
proof that $\geol$ is not regular.

We also require some more notation. Following \cite{kari}, we define the
insertion of $w$ into $z$, denoted by
$z\leftarrow w$, to be 
the set of all words of the form $z'wz''$, where
$z=z'z''$.
Given languages 
$L_1,L_2 \subseteq X^*$, we define also the insertion of $L_2$ into $L_1$,
\[
L_1 \leftarrow L_2 := \{ z\leftarrow w \mid z \in L_1, w \in L_2\} .
\]
Observe that, if $L_1$ and $L_2$ are regular, then
so is $L_1\leftarrow L_2$.

\begin{proposition}\label{prop:geovsgeocon}
Let
$G
%= \langle a,c,d,t \mid ad=da, c=a^2, t^2=1,  
%  tat = a^{-1}d \rangle
\cong \Z^2 \rtimes \Z/2\Z$ be as defined above, with
generating set $X=\{a^{\pm 1},c^{\pm 1},d^{\pm 1},t\}$.
Then $\geocl(G,X)$ is regular, but $\geol(G,X)$ is not.
Moreover, for some orderings of the generating set,
$\sphcl(G,X)$ and $\mincl(G,X)$ are regular,
but $\sphl(G,X)$ is not.
\end{proposition}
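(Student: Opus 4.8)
The plan is to exploit the explicit structure of $G$. The abelian subgroup $N=\langle a,b\rangle\cong\Z^2$ has index $2$, and with the basis $a=(1,0)$, $b=(0,1)$ we have $c=a^2=(2,0)$, $d=ab=(1,1)$, while $t$ acts on $N$ as the $GL_2(\Z)$‑reflection $\rho$ transposing the two coordinates; every element of $G$ is uniquely $n$ or $nt$ with $n\in N$, and the rewriting rules $c=a^2$, $tat=a^{-1}d$ (hence $tct=c^{-1}d^2$, $tdt=d$), together with $[a,c]=[a,d]=[c,d]=1$, determine all computations. First I would work out the word metric $|\cdot|_G$ on $N$: a shortest word for $(p,q)\in N$ uses at most one ``conjugated block'' $tWt$ (nested or repeated blocks never help, by subadditivity), so $|(p,q)|_G$ is the minimum of finitely many piecewise‑linear functions, the two dominant ones being the cost $D(p,q)=|q|+\lceil|p-q|/2\rceil$ of a \emph{direct} word (clear one coordinate with $d^{\pm1}$'s, finish with $c^{\pm1},a^{\pm1}$) and the cost $E(p,q)=\lceil|p|/2\rceil+\lceil|q|/2\rceil+2$ of a \emph{compressed} word of type $tc^{q/2}t\,c^{p/2}$ (the block moves in the second coordinate at half rate); a similar parity‑indexed formula holds for $|(p,q)t|_G$, with minimum close to $\lceil|p|/2\rceil+\lceil|q|/2\rceil+1$. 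I would also record the conjugacy classes: $[(p,q)]\subconj=\{(p,q),(q,p)\}$ for $(p,q)\in N$ (finite), whereas for an element of $Nt$ one computes $[(p,q)t]\subconj=\{(x,y)t : x+y=p+q\}$, a whole coset of the anti‑diagonal line; in particular an $Nt$‑conjugacy class is determined by the single integer $p+q$.

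For the non‑regularity of $\geol(G,X)$ — Cannon's observation, \cite[p.~268]{neushap} — I would give an explicit witness. A direct check shows $tc^jtc^k$ represents $(2k,2j)$ and has length $j+k+2$, while the length formula gives $|(2k,2j)|_G=k+j+2$ exactly when $k\le j-1$ (and $|(2k,2j)|_G=j+k$ once $k\ge j$). Hence
\[
\geol(G,X)\cap t c^* t c^* \;=\; \{\, t c^j t c^k : 0\le k\le j-1 \,\},
\]
which is not regular (pump the first $c$‑block), so $\geol(G,X)$ is not regular. Fix now the total order $\preceq$ on $X$ with $a\prec a^{-1}\prec c\prec c^{-1}\prec d\prec d^{-1}\prec t$. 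The symmetric computation shows that $c^jtc^kt$ represents $(2j,2k)$, is geodesic precisely when $j\le k-1$, and then — by an easy induction maximising the number of $c$'s before each $t$ — is the $\preceq$‑shortlex normal form of $(2j,2k)$; therefore
\[
\sphl(G,X)\cap c^* t c^* t \;=\; \{\, c^j t c^k t : 0\le j\le k-1 \,\},
\]
again not regular, so $\sphl(G,X)$ is not regular for this ordering.

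For the regularity of $\geocl(G,X)$ the key remark is that the one comparison which is \emph{not} regular — deciding, for a pair $\{(p,q),(q,p)\}$, which of $|p|,|q|$ is larger — never enters the description of $\geocl$: it only selects which of the two conjugates is length‑minimal (and in the remaining cases $|(p,q)|_G=|(q,p)|_G$, which forces this common value to be the \emph{symmetric} quantity $E(p,q)$). Since any word for $(p,q)$ has length $\ge|(p,q)|_G$, a word $w$ with $\pi(w)=(p,q)\in N$ lies in $\geocl$ iff $w$ is geodesic and $(p,q)$ is a length‑minimal conjugate; and once we restrict to the length‑minimal conjugate, the admissible geodesic ``shapes'' change only at bounded coordinate values (such as $|p|\le2$ or $|q|\le2$), the relevant magnitude inequality being automatically encoded in the non‑negativity of letter counts. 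Concretely I would split $\geocl(G,X)$ into the $N$‑part and the $Nt$‑part; for the $N$‑part I would list, over finitely many sign/parity cases, the finitely many shapes of geodesics of length‑minimal conjugates (words of types $d^*c^*$, $d^*a$, $tc^*tc^*$, $c^*tc^*t$, and a few variants, all with block exponents unconstrained relative to one another), and for the $Nt$‑part the analogous shapes of geodesics of length‑minimal elements of an anti‑diagonal coset; this exhibits $\geocl(G,X)$ as a finite union of regular languages. (We have $\geocpl(G,X)\supseteq\geocl(G,X)$ automatically, but the explicit description is what we use.)

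Finally, for $\mincl(G,X)=\geocl(G,X)\cap\sphl(G,X)$ and for $\sphcl(G,X)$ with the ordering $\preceq$ above: along each of the finitely many geodesic shapes found in the previous step the $\preceq$‑shortlex normal form is singled out by a fixed local rule (maximise $c$'s before $t$'s, prefer positive generators, etc.), so the normal forms of the length‑minimal conjugates form a regular language, which is $\mincl(G,X)$; for $\sphcl(G,X)$ one additionally uses that an $Nt$‑conjugacy class is determined by $\sigma=p+q$, so within that coset one must pick the $\preceq$‑least length‑minimal element $(x,y)t$ with $x+y=\sigma$, which is again given by a bounded rule (balance $|x|$ and $|y|$, break ties by $\preceq$), once more yielding a finite union of regular languages. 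The main obstacle throughout is not any single clever step but the bookkeeping: one must verify carefully, case by case in the signs and parities of the coordinates (and in $\sigma\bmod$ small integers for the $Nt$‑coset), that the geodesic and shortlex structure of the length‑minimal conjugates involves no comparison of two unbounded quantities — precisely the comparison ($k$ versus $j$ in $tc^jtc^k$) that does occur, and that forces non‑regularity, for $\geol$ and $\sphl$.
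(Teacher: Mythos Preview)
Your plan follows the same route as the paper: embed $G$ in $\Z^2\times\{0,1\}$, read off geodesic lengths, use the two--element conjugacy classes in $N$ and the anti--diagonal cosets in $Nt$, and then assemble $\geocl$, $\mincl$, $\sphcl$ as finite unions of explicit regular pieces.  Your non--regularity arguments for $\geol$ and $\sphl$ are correct and essentially identical to the paper's (you intersect with $tc^*tc^*$ and $c^*tc^*t$; the paper uses $c^*tc^*t$ for both).

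There is, however, a concrete slip in your sketch of the $N$--part of $\geocl$.  You list $tc^*tc^*$ and $c^*tc^*t$ among the shapes of conjugacy geodesics.  With positive $c$'s these are \emph{never} conjugacy geodesics: $c^jtc^kt$ (geodesic only for $j<k$) represents $(2j,2k)$, whose conjugate $(2k,2j)$ has the strictly shorter direct representative $c^{k-j}d^{2j}$ of length $j+k$.  In fact, for same--sign pairs $\{(p,q),(q,p)\}$ with $p,q\ge 0$ the length--minimal conjugate always has a \emph{direct} geodesic (no $t$'s), and the $\geocl$ contribution is just $(\{c,d\}^*\leftarrow\{\1,a\})\cup(\{C,D\}^*\leftarrow\{\1,A\})$.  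The two--$t$ shapes enter $\geocl$ only for the mixed--sign classes $[a^mB^n]$ with $m,n\ge 3$, where \emph{both} conjugates have equal (compressed) length and the contributing words look like $c^+\leftarrow t(C^+\leftarrow\{A,C\})t$ --- opposite signs inside and outside the $t$--block.  Your ``key remark'' (the $|p|$ vs.\ $|q|$ comparison is absorbed by the word shape) is exactly the right insight, but your sample shapes contradict it.

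A smaller point: for $\sphcl$ on an $Nt$--class $\{(x,y)t:x+y=s\}$, the $\preceq$--least length--minimal element is not obtained by ``balancing $|x|$ and $|y|$'' but by taking the extreme $(s,0)t$ (so the representative is $\{\1,a\}c^*t$ or $\{\1,A\}C^*t$), since all $(m,n)t$ with $m\ge n\ge 0$, $m+n=s$ have the same length and $c<d$ in your order.  This does not affect regularity but shows that the details you defer really do need to be worked through; the paper carries out precisely this bookkeeping.
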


\begin{proof}
The group $G=\Z^2 \rtimes \Z/2\Z$ can be embedded
in Euclidean 3-space $\R^3$ using 
the function  $f:G \rightarrow \R^3$ 
defined by
$f(g) := (i_g,j_g,\epsilon_g)$
where $a^{i_g}b^{j_g}t^{\epsilon_g}$ (with $b=tat$)
is the shortlex normal form over $Z$ for the element $g$ of $G$.
Multiplication by $a^{\pm 1}$, $c^{\pm 1}$, or $d^{\pm 1}$
has the effect of adding a horizontal
vector (parallel to the $z=0$ plane); 
in particular, for any $g \in G$ 
and $\mu \in \{1,-1\}$, we have
\begin{eqnarray*}
f(ga^\mu)&=&f(g)+\mu(1-\epsilon_g,\epsilon_g,0),\quad
f(gc^\mu)=f(g)+2\mu(1-\epsilon_g,\epsilon_g,0),\\
f(gd^\mu)&=&f(g)+\mu(1,1,0).  
\end{eqnarray*}

On the other hand, multiplication by $t$ adds
a vertical vector:  
\[ f(gt)=f(g)+(1-2\epsilon_g)(0,0,1).\]
Using the taxicab metric on $\R^3$ (defined by the norm
$||(x,y,z)|| = |x|+|y|+|z|$), we see that
the maximum distance between $f(g)$ and $f(gx)$ due to multiplication
by a single generator $x \in X$ is 2.  

For the remainder of the proof we order the generating
set $X$ by $a<c<A<C<d<D<t$,
where for ease of notation
we use a capital letter $A$ to denote the inverse
$a^{-1}$, etc.

First we show that the languages $\geol(G,X)$ and $\sphl(G,X)$
are not regular.
Consider the element $g := a^{2m}b^{2n}$ in $G$,
where $m,n \ge 1$, and let
$w$ be any word representing $g$. Then $w$ 
must contain an even (possibly zero) number of
occurrences of $t$, and, 
since the point $f(g)=(2m,2n,0)$ 
has taxicab metric distance $2m+2n$ 
from the identity of the group, and multiplication by $t$ does not affect the  
the first two coordinates, we see that
$w$ must involve 
at least $m+n$ letters from $\{a,c,d\}^{\pm 1}$.
If $m \ge n$, then the word 
$c^{m-n}d^{2n}$ of length $m+n$
is a geodesic representative for $g$,
and no geodesic representative can contain $t$.  
But if $m<n$ and $w$ does not contain $t$, then
$w$ must contain at least $2n$ occurrences of $d$
since the $y$-coordinate of $f(g)$ is $2n$;
then $c^{m-n}d^{2n}$ is a shortest representative of $g$ over
$\{a,c,d\}^{\pm 1}$.
However, the shorter word $c^mtc^nt$
also represents $g$, and
is geodesic. Moreover, any
word of length $m+n+2$ representing $g$
must contain two occurrences of $t$
and $m+n$ occurrences of letters among $c,d$,
with at most $m$ occurrences of $c$ before the first
$t$, and so $c^mtc^nt$ is the shortlex normal
form for $g$. 
Hence
\[
\geol(G,X) \cap c^*tc^*t = \sphl(G,X) \cap c^*tc^*t = \{c^mtc^nt \mid m < n\}.
\]
Since this last set is not regular, but
the set of regular languages is closed under intersection,
we conclude that neither
$\geol(G,X)$ nor $\sphl(G,X)$ is regular.

We shall now deduce the regularity of
$\geocl(G,X)$, $\mincl(G,X)$, and $\sphcl(G,X)$ from
the structure of the conjugacy classes
for this group discussed in the proof of Proposition~\ref{prop:sphclnotreg}.
(We note that $\sphcl(G,X)$ and
$\mincl(G,X)$ are also regular with various other orderings of the generators;
we chose the ordering in this proof 
because it results in the least complicated description.)

We begin by computing the portions of these three 
languages from conjugacy classes of the form $[a^mb^n]\subconj$ 
when $0 \le m,n$; without loss of generality we assume $m \le n$.
Using similar arguments to those above shows that geodesic
representatives of the group element $a^nb^m$ in this conjugacy class
have length $\lceil (m+n)/2 \rceil$ and have the form 
$(c^*d^*)^*$ for $m+n$ even and
$(c^*d^*)^*a(c^*d^*)^*$ for $m+n$ odd. 
However, the geodesic representatives of the 
other element $a^mb^n$ of this conjugacy class are strictly
longer; when $n-m \ge 3$ they all involve two
occurrences of $t$, and
geodesic representatives of $a^mb^{m+1}$ and $a^mb^{m+2}$
are given by $Ad^{m+1}$ and $Cd^{m+2}$, respectively.
Thus the only element of $[a^mb^n]\subconj$ with minimal length
up to conjugacy is $a^nb^m$. 
A similar argument shows that the only group element in 
$[A^mB^n]\subconj$ for $0 \le m \le n$ with minimal length
up to conjugacy is $A^nB^m$.
Thus when $m$ and $n$ are both nonnegative,
%or both nonpositive, 
the portion of the set $\geocl(G,X)$
arising from the conjugacy classes
$[a^mb^n]\subconj$  and $[A^mB^n]\subconj$ is 
\[ (\{c,d\}^* \leftarrow \{ \1,a\}) \cup (\{C,D\}^* \leftarrow 
\{\1,A\})\]
and the portion of both $\mincl(G,X)$ and $\sphcl(G,X)$ is
\[ \{\1,a\}c^*d^* \cup \{\1,A\}C^*D^*.\]

It is more complicated to describe the
contributions to the three conjugacy
languages from $[a^mB^n]\subconj$ in the case that $0 < m,n$,
and we shall do this only briefly.
Note first that the sets of geodesic
representatives of $b^k$ for $k \ge 1$ are:
$\{Ad,dA\}$ for $k=1$; $\{Cd^2,dCd,d^2C,tct\}$ for $k=2$;
$\{tc^{k/2}t\}$ for $k \ge 4$ even; and $t(c^{(k-1)/2}\leftarrow a)t$ 
for $k \ge 3$ odd.

In the case that $3 \le m,n$, geodesics representing
both elements
$a^mB^n$ and $A^nb^m$ of this class require two
occurrences of the letter $t$, and both elements are
of minimal length up to conjugacy. 
The contribution to $\geocl(G,X)$ from these conjugacy classes is
the regular set
$L_1 \cup L_2 \cup L_3 \cup L_1^- \cup L_2^- \cup L_3^-$, where
\begin{eqnarray*}
L_1 &=& [c^+ \leftarrow \{a,c\}] \leftarrow [t (C^+\leftarrow \{A,C\})t],\\
L_2 &=& (c^+ \leftarrow t C^2C^* t) \leftarrow d ,\\
L_3 &=& ( c^2c^* \leftarrow tC^+ t) \leftarrow D.
\end{eqnarray*}
and $L_1^-,L_2^-,L_3^-$ are defined similarly to $L_1,L_2,L_3$,
but with all generators replaced by their inverses.
The words in $\mincl(G,X)$ are the shortlex normal
forms of all of the elements in the classes 
$[a^mB^n]\subconj$ with $3 \le m,n$, which we collect in
the regular expression
\[
\{c^2,ac\}c^* t \{C^2,AC\} C^* t
\cup \{C^2,AC\} C^* t \{c^2,ac\} c^* t. 
\]
Similarly, the intersection of this last set with
$\sphcl(G,X)$ 
is given by the regular expression
$
\{c^2 ,ac\}c^* t \{C^2,AC\}C^* t.
$

Suppose, on the other hand, that
either $m \in \{1,2\}$ and $n \ge 3$, or 
$m \ge 3$ and $n \in \{1,2\}$.
Then the conjugacy class
$[a^mB^n]\subconj$ contains a unique shortest group element, namely
$A^nb^m$ when $m \le 2$ or $a^mB^n$ when $n \le 2$.
The words of $\geocl(G,X)$
from classes of this type are in the regular set 
$L_4 \cup L_5 \cup L_4^- \cup L_5^-$, where
\begin{eqnarray*}
L_4 &=&  (c^2c^*\leftarrow D)\leftarrow \{\1,a\},\\
L_5 &=& ((c^2c^*\leftarrow D)\leftarrow D)\leftarrow \{a,c\}.
\end{eqnarray*}
and $L_4^-,L_5^-$ are similarly defined in terms of  $A,C,d$.
The portion of both $\mincl(G,X)$ and $\sphcl(G,X)$ from these classes is
given by the regular expression
\[
\{a,\1\}c^2c^*D \cup \{a,c\}c^2c^*D^2
 \cup \{A,\1\}C^2C^*d\cup \{A,C\}C^2C^*d^2.
\]
The finitely many classes $[a^mB^n]\subconj$
when both $m$ and $n$ are
at most $2$ give rise to finite subsets of the
three conjugacy languages.

Finally, we consider the conjugacy classes $[a^st]\subconj$
for $s \in \Z$.  
Recall that this class contains all
group elements of the form $a^mb^nt$ for $m,n \in \Z$ and $m+n=s$.
Suppose first that $s > 0$.
Geodesic representatives of these elements all contain
a single $t$. 
Geodesic representatives of $a^mb^nt$ have length
$\lceil s/2 \rceil + 1$ when $m,n\geq 0$, but are longer otherwise.
Hence $\geocl(G,X)$ consists of the
geodesic representatives of $a^mb^nt$ with $m,n \ge 0$.
The case in which $s<0$ is similar.
Thus the contribution to $\geocl(G,X)$ from classes
$[a^st]\subconj$ is 
\[
(\{c,d\}^* t \{c,d\}^*\leftarrow \{\1,a\}) \cup
(\{C,D\}^* t \{C,D\}^*\leftarrow \{\1,A\}).
\]
The subsets of $\mincl(G,X)$ and $\sphcl(G,X)$  arising from these
conjugacy classes are defined by regular
expressions
\[ \{\1,a\} c^* \{\1,d\} t c^* \cup t a c^*
 \cup \{\1,A\} C^* \{\1,D\} t C^* \cup t A C^*
\quad\mbox{and}\quad
\{\1,a\} c^* t \cup \{\1,A\} C^* t,\]
respectively.
\end{proof}

In the next two propositions we change the
virtually abelian group under consideration to
a semidirect product group 
$K = \Z^2 \rtimes D_8$.  
One of the generating transpositions of $D_8$  acts by
swapping the generators of $\Z^2$, and the other one
fixes the generators; that is, 
$$
K=\langle a,b,t,u \mid t^2=u^2=(tu)^4=1, ab=ba, a^t=b, a^u=a, a^u=b \rangle.
$$
The index 4 subgroup $\langle a,b,t \rangle$ of $K$ is
the group $G$ considered in Propositions~\ref{prop:sphclnotreg}
and~\ref{prop:geovsgeocon}.

\begin{proposition}\label{prop:z2d8good}
Let
%\[
%K = \langle a,b,t,u \mid t^2=u^2=(tu)^4=1,  ab=ba, tat=b, uau=a, uau=b \rangle
%\cong (\Z^2 \rtimes D_8), \]
$K \cong \Z^2 \rtimes D_8$ be as defined above,
where the generators $Z'=\{a^{\pm 1},b^{\pm 1},t,u\}$
are ordered with $a^{\pm 1} < b^{\pm 1} < t < u$.
Then $\geol(K,Z')$, $\sphl(K,Z')$,  $\geocl(K,Z')$, 
and $\mincl(K,Z')$ are regular languages, but $\sphcl(G,Z')$ is not.
\end{proposition}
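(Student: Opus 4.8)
The group $K = \Z^2 \rtimes D_8$ contains the index-$2$ subgroup $G = \langle a,b,t\rangle = \Z^2 \rtimes \Z/2\Z$ studied in Proposition~\ref{prop:sphclnotreg}, and (with respect to $Z'$) the extra generator $u$ acts on $\Z^2$ by fixing both $a$ and $b$ — so conjugation by $u$ fixes the subgroup $\Z^2$ pointwise, and the new element $u$ together with its products with $t$ give only finitely many new coset representatives. The plan is to exploit this to describe $\sphl$, $\geol$, $\geocl$, $\mincl$, and the conjugacy classes of $K$ explicitly in terms of the already-understood data for $G$, and then to read off regularity (or non-regularity) in each case.

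First I would establish the normal form: every element of $K$ is uniquely $a^i b^j \tau$ where $\tau$ ranges over the eight coset representatives of $\Z^2$ in $K$ (elements of $D_8$), so $\sphl(K,Z')$ is a finite union of sets of the form $\{a^ib^j\}_{i,j\in\Z}$ followed by a fixed short word, hence regular; this is essentially the same computation as in Proposition~\ref{prop:sphclnotreg} with extra bookkeeping for $u$. Because multiplication by $u$ or $t$ changes only the ``$D_8$-part'' and the way the $\Z^2$-coordinates get permuted, and because (unlike in Proposition~\ref{prop:geovsgeocon}) the Cayley graph geometry here is that of the ``good'' generating set $Z$ of $G$ augmented by finitely many bounded moves, I expect $\geol(K,Z')$ to have the same kind of description as $\geol(G,Z)$ — a finite union of languages $\{a^\eta,b^\zeta\}^*$-type pieces with bounded ``$D_8$ decorations'' inserted — and hence be regular. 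Then $\geocl(K,Z')$ and $\mincl(K,Z') = \geocl \cap \sphl$ follow by the same method used in Proposition~\ref{prop:sphclnotreg}: compute the conjugacy classes of $K$ (the $D_8$-action enlarges the classes $[a^ib^j]\subconj$ and the classes meeting $G\setminus\Z^2$, but each class is still a finite or ``linear'' family), identify the minimal-length representatives, and observe that the resulting languages are finite unions of regular pieces.

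The non-regularity of $\sphcl(K,Z')$ I would obtain exactly as in Proposition~\ref{prop:sphclnotreg}: intersect $\sphcl(K,Z')$ with the regular language $\{a,b\}^*$. Since $\{a,b\}^* \subseteq G$ and conjugation within $K$ restricted to $\Z^2$ is governed by the full $D_8$-action on $\Z^2$ — which still only permutes and sign-changes the two coordinates — the conjugacy class of $a^ib^j$ inside $K$ is a finite set among $\{a^{\pm i}b^{\pm j}, a^{\pm j}b^{\pm i}\}$, and the shortlex-least representative condition on $a^ib^j$ again forces a comparison like $|i|$ versus $|j|$. Thus $\sphcl(K,Z')\cap\{a,b\}^*$ is, up to the choice of ordering, one of the non-regular sets $\{a^ib^j : i > j \ge 0\}$ or $\{b^ia^j : i>j\ge 0\}$ (or a similarly non-regular variant accounting for signs), so $\sphcl(K,Z')$ is not regular for any ordering of $Z'$.

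**Expected main obstacle.** The routine-but-delicate part is verifying that $\geol(K,Z')$ really is regular, i.e.\ that introducing $u$ (and the word $tu$ of order $4$) does not create unbounded ambiguity in geodesic representatives the way passing to the generating set $X$ did in Proposition~\ref{prop:geovsgeocon}. The key point to nail down is that, because $u$ fixes $a$ and $b$, the geometry of $K$ over $Z'$ is essentially a bounded ``thickening'' of the geometry of $G$ over $Z$ — each generator still moves the $\Z^2$-coordinates by a bounded amount and the $D_8$-part lives in a finite set — so one can push through the same explicit geodesic description as in Proposition~\ref{prop:sphclnotreg} with a larger but still finite index set. Once that explicit regular description of $\geol(K,Z')$ is in hand, everything else (including $\geocl$, $\mincl$, and the $\sphcl$ non-regularity) follows by the now-standard arguments, so I would present the $\geol$ computation in reasonable detail and then summarise the conjugacy-language computations, much as Proposition~\ref{prop:sphclnotreg} does.
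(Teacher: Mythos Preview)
Your proposal is correct and follows essentially the same approach as the paper's proof: explicit normal forms over the eight $D_8$-cosets give $\sphl$, an explicit finite union of insertion-type pieces gives $\geol$, the conjugacy classes are computed directly to yield regular expressions for $\geocl$ and $\mincl$, and intersection with $a^*b^*$ produces the non-regular set $\{a^ib^j : i>j\}$ for $\sphcl$. One small correction: the $D_8$-action on $\Z^2$ here involves no sign changes (since $u$ fixes both $a$ and $b$, the action factors through the swap by $t$), so $[a^ib^j]\subconj = \{a^ib^j,a^jb^i\}$ exactly as in $G$---but this only simplifies your argument.
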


\begin{proof}
The shortlex language is
\[
\sphl(K,Z') = \{a^ib^jv \mid i,j \in \Z, v \in \{1,t,u,tu,ut,tut,utu,tutu\}\}
\]
and the geodesic language $\geol(K,Z')$ is $\bigcup_{\eta,\zeta \in \{\pm 1\}}
(L_1^{\eta,\zeta} \cup L_3^\eta \cup L_2^{\eta,\zeta})$, where
\begin{eqnarray*}
L_1^{\eta,\zeta} &=& \{a^\eta,b^\zeta\}^*\leftarrow \{\1,u\},\\
L_2^{\eta,\zeta} &=& (\{a^\eta,b^\zeta\}^*\leftarrow\{\1,u\})\leftarrow
        t (\{a^\zeta,b^\eta\}^* \leftarrow u) t,\\
L_3^{\eta,\zeta} &=&\{a^\eta,b^\zeta\}^* t
              \{a^\zeta,b^\eta\}^*\leftarrow \{\1,u\}.
\end{eqnarray*}

By successively conjugating shortlex normal forms by generators, we find
that the conjugacy classes are given by 

\begin{eqnarray*}
[a^ib^j]\subconj&=&\{a^ib^j,a^jb^i\}, \quad
[a^ib^j t]\subconj = 
[a^ib^j utu]\subconj = 
         \{a^{i+k}b^{j-k}v \mid k \in \mathbb{Z}, v \in \{t,utu\} \}, \\\  
[a^ib^j u]\subconj&=&
[a^jb^i tut]\subconj=
          \{a^ib^ju,a^jb^itut\}, \quad
[a^ib^j tutu]\subconj=\{a^ib^j tutu,a^jb^i tutu\}, \\\ 
[a^ib^j tu]\subconj &=& 
[a^ib^j ut]\subconj =
          \{a^{i+k}b^{j-k}v \mid k \in \mathbb{Z}, v \in \{tu,ut\} \}.  \\
\end{eqnarray*}
As a consequence, 
\begin{eqnarray*}
\geocl(K,Z')&=&
\bigcup_{\eta,\zeta \in \{\pm 1\}} (L_1^{\eta,\zeta} \cup
    {L'}_2^{\eta,\zeta}) \cup
     (\bigcup_{\eta \in \{\pm 1 \}} {L'}_3^{\eta}),%$,
\quad\mbox{where}\\
{L'}_2^{\eta,\zeta} &=&(\{a^\eta,b^\zeta\}^*\leftarrow u)\leftarrow
             t (\{a^\zeta,b^\eta\}^*\leftarrow u) t,\\
{L'}_3^{\eta} &=& (\{a^\eta,b^\eta\}^*\leftarrow t)\leftarrow\{1,u\},
\end{eqnarray*}
and 
\[
\mincl(K,Z') =
\bigcup_{\eta,\zeta \in \{\pm 1\}} 
   (a^\eta)^*(b^\zeta)^*\{1,u,tutu\}
\cup
\bigcup_{\eta \in \{\pm 1\}} 
  \{a^\eta)^*(b^\eta)^* \{t,tu,ut\}.
\]
However, just as for the finite index subgroup
$G$ of $K$ in Proposition~\ref{prop:sphclnotreg},
the intersection $\sphcl(K,Z') \cap a^*b^* = 
\{a^ib^j \mid i>j\}$ is not regular, and so
this \wsphcl\ is not regular.
\end{proof}

\begin{proposition}\label{prop:vab_geoconnotreg}
Let
\begin{eqnarray*}
K
&=&
  \langle a,c,d,t,u \mid\\
&& \ \ \ \ t^2=u^2=(tu)^4=1, ad=da, c=a^2, tat=a^{-1}d, uau=a, udu=d \rangle\\
&\cong&\Z^2 \rtimes D_8,\end{eqnarray*}
with generating set $X'=\{a^{\pm 1},c^{\pm 1},d^{\pm 1},t,u\}$.
Then $\geol(K,X')$ and $\geocl(K,X')$  are not regular.
Moreover, if the set $X'$ is ordered by
$a<c<a^{-1}<c^{-1}<d<d^{-1}<t<u$, then none of the languages
$\sphl(K,X')$, $\mincl(K,X')$, and $\sphcl(K,X')$ is
regular.
\end{proposition}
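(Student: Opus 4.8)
The plan is to reduce all five non-regularity statements to Proposition~\ref{prop:geovsgeocon} by means of a retraction of $K$ onto the index-four subgroup $G=\langle a,c,d,t\rangle$, together with one length computation inside $K$. Throughout, write $X:=X'\setminus\{u\}=\{a^{\pm1},c^{\pm1},d^{\pm1},t\}$ and $b:=a^{-1}d$, so $\langle a,b\rangle=\langle a,d\rangle$ is the normal $\Z^2$ and $G=\langle a,b,t\rangle\cong \Z^2\rtimes\Z/2\Z$ is the group of Proposition~\ref{prop:geovsgeocon} with $X$ as its generating set there. Since sending $u$ to $1$ and each of $a,c,d,t$ to the like-named generator of $G$ respects every defining relator of $K$ (in particular $(tu)^4\mapsto t^4=1$, while $uau=a$ and $udu=d$ become trivial), it defines a surjection $\theta\colon K\to G$ whose effect on a word over $X'$ is to delete every occurrence of $u$, leaving a word over $X$; the generators of $G$ lift to the like-named generators of $K$, giving a section $\iota\colon G\to K$ with $\theta\circ\iota=\mathrm{id}_G$, so $\iota$ is injective with image $\langle X\rangle\le K$. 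I would record two consequences: (i) for $g\in G$ one has $|\iota(g)|_{X'}=|g|_X$ and, more generally, $|\iota(g)u|_{X'}=|g|_X+1$, where the bounds ``$\le$'' are immediate and ``$\ge$'' holds because a shorter word $w$ over $X'$ would have $\theta(w)$ a geodesic word over $X$ for $g$ from which no $u$ was deleted, forcing $w=\theta(w)\in X^*$ to represent an element of $\iota(G)$ equal to $\iota(g)$ or to $\iota(g)u$, the second being impossible as $u\notin\iota(G)$; (ii) consequently every geodesic, hence every shortlex least, word over $X'$ representing an element of $\iota(G)$ already lies in $X^*$. Since the ordering $a<c<a^{-1}<c^{-1}<d<d^{-1}<t<u$ restricts on $X$ to the ordering used for $G$ in Proposition~\ref{prop:geovsgeocon}, (i)--(ii) give $\geol(K,X')\cap X^*=\geol(G,X)$ and $\sphl(K,X')\cap X^*=\sphl(G,X)$; as $X^*$ is regular while $\geol(G,X)$ and $\sphl(G,X)$ are not, neither $\geol(K,X')$ nor $\sphl(K,X')$ is regular.

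For the conjugacy languages I would work with the elements $a^{2m}b^{2n}u=\iota(a^{2m}b^{2n})u$. Here $c^mtc^nt$ is a word over $X$ representing $a^{2m}b^{2n}$ in both $G$ and $K$, so $c^mtc^ntu$ represents $a^{2m}b^{2n}u$ and $c^ntc^mut$ represents $a^{2n}b^{2m}tut$; by Proposition~\ref{prop:z2d8good} the $K$-conjugacy class of $a^{2m}b^{2n}u$ is exactly $\{a^{2m}b^{2n}u,\ a^{2n}b^{2m}tut\}$. The key computation is
\[ |a^{2m}b^{2n}u|_{X'}=\begin{cases} m+n+3,& m<n,\\ m+n+1,& m\ge n,\end{cases}\qquad |a^{2n}b^{2m}tut|_{X'}=m+n+3\ \text{ for all }m,n\ge 0. \]
I would prove this from three ingredients: the $\theta$-bound $|a^{2m}b^{2n}u|_{X'}\ge|a^{2m}b^{2n}|_X$, which is $m+n$ for $m\ge n$ and $m+n+2$ for $m<n$ by Proposition~\ref{prop:geovsgeocon}; a coordinate-sum bound (writing each element of $K$ as a pair consisting of a vector in $\Z^2$ and an element of $D_8$, each of $a^{\pm1},c^{\pm1},d^{\pm1}$ changes the coordinate sum of the $\Z^2$-part by $\pm1$ or $\pm2$ and the $D_8$-action preserves coordinate sums, so any word for an element whose $\Z^2$-part has coordinate sum $N$ uses at least $\lceil|N|/2\rceil$ letters from $X'\setminus\{t,u\}$); and the fact that the $D_8$-part dictates a minimum number of torsion letters (a single $u$ realises the $D_8$-part of $a^{2m}b^{2n}u$ but provides no orientation swap, so the $\Z^2$-part then costs the length of $(2m,2n)$ with no swap available, at least $3n-m\ge m+n+2$ when $m<n$; three torsion letters $ttu$ provide one swap with the $u$ sitting harmlessly in the swapped region, realising $a^{2m}b^{2n}u$ at cost $m+n+3$, witnessed by $c^mtc^ntu$; and the $D_8$-part of $a^{2n}b^{2m}tut$ admits no word of fewer than three torsion letters, so the same analysis gives its length $m+n+3$, witnessed by $c^ntc^mut$). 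It follows that $c^mtc^ntu$, a word of length $m+n+3$ representing $a^{2m}b^{2n}u$, lies in $\geocl(K,X')$ precisely when $m<n$, since then $m+n+3$ is the minimum of $|\cdot|_{X'}$ over the two-element class, whereas for $m\ge n$ that minimum is $m+n+1<m+n+3$. Hence
\[ \geocl(K,X')\cap c^*tc^*tu=\{\,c^mtc^ntu\mid m<n\,\}, \]
which is not regular, so $\geocl(K,X')$ is not regular.

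Finally, for $\mincl(K,X')$ and $\sphcl(K,X')$ I would upgrade this to shortlex normal forms: a length-$(m+n+3)$ word over $X'$ for $a^{2m}b^{2n}u$ uses exactly $m+n$ letters outside $\{t,u\}$, each of coordinate sum $+2$, so it contains no $a^{\pm1}$, begins with a block of at most $m$ letters $c$, and is then forced onto the letter $t$; a routine check then shows that for $1\le m<n$ the shortlex normal form of $a^{2m}b^{2n}u$ is $c^mtc^ntu$, and that for $1\le q<p$ the shortlex conjugacy normal form of the class $\{a^{2q}b^{2p}u,\ a^{2p}b^{2q}tut\}$ is $c^ptc^qut$ (this word, representing $a^{2p}b^{2q}tut$, beats its competitor $c^qtc^ptu$, the shortlex normal form of $a^{2q}b^{2p}u$, because after the common prefix $c^q$ it continues with $c$ rather than with $t$). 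Since $a^{2m}b^{2n}u$ has minimal length up to conjugacy exactly when $m<n$ (both class members then having length $m+n+3$), these identifications yield
\[ \mincl(K,X')\cap c^+tc^+tu=\{\,c^mtc^ntu\mid 1\le m<n\,\},\qquad \sphcl(K,X')\cap c^+tc^+ut=\{\,c^ptc^qut\mid 1\le q<p\,\}, \]
neither of which is regular, so $\mincl(K,X')$ and $\sphcl(K,X')$ are not regular. The main obstacle is precisely the displayed length computation and the two shortlex normal-form identifications: although the retraction $\theta$ removes the ``$u$-bookkeeping'', what remains is a careful, if elementary, case analysis of how a pair of letters $t$ creates one usable $\Z^2$-orientation swap while a stray $u$ is absorbed into the swapped region.
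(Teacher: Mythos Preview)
Your proof is correct and, for $\geol$, $\sphl$, $\geocl$, and $\mincl$, follows the paper's approach closely (your retraction $\theta$ is a clean way to package what the paper does by observing that geodesics over $X'$ for elements of $\iota(G)$ cannot involve $u$). For $\sphcl$ you take a genuinely different route: the paper switches to the conjugacy class $[a^{2m}b^{2n}tutu]=\{a^{2m}b^{2n}tutu,\,a^{2n}b^{2m}tutu\}$, whose two members share the $D_8$-part $tutu$, so that the shortlex comparison reduces immediately to ``which has the longer initial $c$-block'', giving $\sphcl\cap c^*tc^*utu=\{c^mtc^nutu\mid m\ge n\}$. You instead reuse the class $[a^{2m}b^{2n}u]=\{a^{2m}b^{2n}u,\,a^{2n}b^{2m}tut\}$, which forces you to compare shortlex normal forms of elements with \emph{different} $D_8$-parts ($u$ versus $tut$). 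This works, and has the virtue of not introducing a new family of words, but the paper's choice sidesteps that extra analysis.

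Two small inaccuracies to fix. First, your claim that every length-$(m{+}n{+}3)$ word for $a^{2m}b^{2n}u$ uses exactly $m+n$ letters outside $\{t,u\}$ fails at the boundary $n=m+1$: the word $c^{-1}d^{2m+2}u$ has length $2m+4=m+n+3$, a single torsion letter, and $m+n+2$ letters outside $\{t,u\}$. This does not damage your shortlex identification, since any such one-torsion word must begin with $c^{-1}$ or $d$ (both lex-larger than $c$), but the justification as written is incomplete for this case. Second, the assertion that ``$a^{2m}b^{2n}u$ has minimal length up to conjugacy exactly when $m<n$'' is false: for $m\ge n$ one has $|a^{2m}b^{2n}u|=m+n+1<m+n+3=|a^{2n}b^{2m}tut|$, so $a^{2m}b^{2n}u$ is \emph{always} of minimal length in its class. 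Your $\mincl$ conclusion still holds, but for the right reason: when $m\ge n$ the word $c^mtc^ntu$ is not geodesic (its length $m+n+3$ exceeds $|a^{2m}b^{2n}u|=m+n+1$), hence it is not in $\sphl$ and a fortiori not in $\mincl$.
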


\begin{proof}
In order to analyze 
geodesic representatives for elements of $K$ over $X'$, we
use similar arguments to those in the proof of
Proposition~\ref{prop:geovsgeocon}, in that we use an embedding 
$f:K \rightarrow \R^2 \times D_8$
with the map $f(g) := (i_g,j_g,h_g)$, where $g = a^{i_g} b^{j_g} h_g$
with $b=a^t$ and $h_g \in \langle t,u \rangle \cong D_8$. 
Geodesic representatives over $X'$ of the elements of
the subgroup $\Z^2 \rtimes \Z/2\Z = \langle X \rangle$ of $K$
where $X=\{a^{\pm 1},c^{\pm 1},d^{\pm 1},t\}$
cannot contain an occurrence of the letter $u$,
and so the proof
of Proposition~\ref{prop:geovsgeocon} also shows
that $\geol(K,X')$ and $\sphl(K,X')$ are not regular.

To show further that $\geocl(K,X')$ and $\mincl(K,X')$
are not regular, 
consider the word $w_{mn}:=c^mtc^ntu$ for $m,n \ge 0$.

When $n \le m$, then
(as in the proof of
Proposition~\ref{prop:geovsgeocon})
$w_{mn} =_K c^{m-n}d^{2n}u$, and 
so $w_{mn} \not\in \geol(K,X')$.

When $m < n$,
we claim that $w_{mn} \in \mincl(K,X')$; 
this will imply that the intersections of each of
$\geocl(K,X')$ and $\mincl(K,X')$ with the regular
language $c^*tc^*tu$ are not regular; hence neither 
conjugacy language is regular.
  
The conjugacy class of $w_{mn}$ is $[w_{mn}]\subconj=[a^{2m}b^{2n}u]\subconj =
\{a^{2m}b^{2n}u,a^{2n}b^{2m}tut\}$, from the proof of 
Proposition~\ref{prop:z2d8good}.  
Using the embedding above, we have
$f(a^{2n}b^{2m}tut) = (2n,2m,tut)$.  Since we need at least
$m+n $ letters in $\{a,c,d\}^{\pm 1}$ to reach this point, and
$tut$ is a geodesic word in $\langle t,u \rangle \cong D_8$, 
every geodesic representative of $a^{2n}b^{2m}tut$ has length at least $m+n+3$.  

On the other hand, $f(w_{mn}) = f(a^{2m}b^{2n}u) = (2m,2n,u)$. 
As in the proof of Proposition~\ref{prop:geovsgeocon}, 
in order to reach a point in
$\R^2 \times D_8$ with first two coordinates $(2m,2n)$ we need at least
$m+n$ occurrences of letters in $\{a,c,d\}^{\pm 1}$.
Suppose for a contradiction that $v$ is a word with $v =_K w_{mn}$
and $l(v) < l(w_{mn})$. So $l(v) \le m+n+2$ and $v$ involves at least
$m+n$ letters in $\{a,c,d\}^{\pm 1}$. Since $v$ is not in the normal subgroup
$\langle a,c,d,t,(tu)^2 \rangle$ of index 2 in $K$, the number of
occurrences of $u$ in $v$ must be odd, and hence must be exactly 1.
We see similarly that the number of occurrences of $t$ is even, so it must
be 0. So deleting the single occurrence of $u$ from $v$ would give a word over
$\{a,c,d\}^{\pm 1}$ of length at most $m+n+1$ 
for $a^{2m}b^{2n}$ which, as we saw in 
Proposition~\ref{prop:geovsgeocon}, does not exist when $m<n$.
Thus the element $a^{2m}b^{2n}u$ of $[w_{mn}]\subconj$ is of
minimal length up to conjugacy, and the word
$w_{mn}$ representing this element lies in $\geocl(K,X')$.
All geodesic representatives of this element lie
in $\{c, d, t\}^* \leftarrow u$, and among these the word
with the longest initial prefix in $c^*$ is $w_{mn}$.
Therefore $w_{mn} \in \mincl(K,X')$ as well, finishing the claim.

In order to prove that $\sphcl(K,X')$ is not regular,
we consider the word $v_{mn}:=c^mtc^nutu$ for $m,n \ge 0$.
The corresponding conjugacy class from the proof of
Proposition~\ref{prop:z2d8good}
is $[v_{mn}]\subconj =
\{a^{2m}b^{2n}tutu,a^{2n}b^{2m}tutu\}$.  The words
$c^mtc^nutu$ and $c^ntc^mutu$ are the shortlex least
words representing the elements $a^{2m}b^{2n}tutu$ and
$a^{2n}b^{2m}tutu$, respectively.  Thus the element of
$\sphcl(K,X')$ corresponding to this conjugacy class is
the representative with the longest initial string of 
the letter $c$.  That is, 
\[
\sphcl(K,X') \cap c^*tc^*utu = \{c^mtc^nutu \mid m \ge n\},
\] 
which is not regular.
\end{proof}

%%%%%%%%%%%%%%%%%%%%%%%%%%%%%%%%%%%%%%%%%%%%%%%%%%%%%%%%%%%%%%%%%%%%%%%%%%%%
%%%%%%%%%%%%%%%%%%%%%%%%%%%%%%%%%%%%%%%%%%%%%%%%%%%%%%%%%%%%%%%%%%%%%%%%%%%%
\section*{Acknowledgments}
%%%%%%%%%%%%%%%%%%%%%%%%%%%%%%%%%%%%%%%%%%%%%%%%%%%%%%%%%%%%%%%%%%%%%%%%%%%%
%%%%%%%%%%%%%%%%%%%%%%%%%%%%%%%%%%%%%%%%%%%%%%%%%%%%%%%%%%%%%%%%%%%%%%%%%%%%

The first two authors were partially supported by the Marie Curie Reintegration Grant 230889, and by scheme 2 grants from the London Mathematical Society. 
The first named author was also supported by the Swiss National Science 
Foundation grants Ambizione PZ00P-136897/1 and Professorship FN PP00P2-144681/1.  
The second author also acknowledges partial support by grants from the Simons
Foundation (\#245625) and the National Science Foundation (DMS-1313559).

%%%%%%%%%%%%%%%%%%%%%%%%%%%%%%%%%%%%%%%%%%%%%%%%%%%%%%%%%%%%%%%%%%%%%%%%%%%%

\bigskip

\textsc{L. Ciobanu,
Mathematics Department,
University of Neuch\^atel,
Rue Emile-Argand 11,
CH-2000 Neuch\^atel, Switzerland
}

\emph{E-mail address}{:\;\;}\texttt{laura.ciobanu@unine.ch}
\medskip

\bigskip

\textsc{S. Hermiller,
Department of Mathematics,
University of Nebraska,
Lincoln, NE 68588-0130, USA 
}

\emph{E-mail address}{:\;\;}\texttt{smh@math.unl.edu}
\medskip

\bigskip

\textsc{D. Holt,
Mathematics Institute,
Zeeman Building,
University of Warwick,
Coventry CV4 7AL, UK
}

\emph{E-mail address}{:\;\;}\texttt{D.F.Holt@warwick.ac.uk}
\medskip

\bigskip

\textsc{S. Rees,
Department of Mathematics,
University of Newcastle,
Newcastle NE1 7RU,
UK
}

\emph{E-mail address}{:\;\;}\texttt{Sarah.Rees@ncl.ac.uk}
\medskip

\end{document}